\newtheorem{thm}{Theorem}[section]
\newtheorem{asu}[thm]{Assumption}
\newtheorem{lemma}[thm]{Lemma}
\newtheorem{prop}[thm]{Proposition}
\newtheorem{defin}[thm]{Definition}
\newtheorem{rema}[thm]{Remark}
\newtheorem{corollary}[thm]{Corollary}
\def \n{\Vert}
\def\E{{\mathbb{E}}}
\def\P{{\mathbb{P}}}
\def\R{{\mathbb{R}}}
\def\N{{\mathbb{N}}}
\def\F{{\cal{F}}}
\def\limt{\lim_{t\to\infty}}
\def\limt0{\lim_{t\to 0}}
\def\limn{\lim_{n\to\infty}}
\def\|{\,|\,}
\def\bn#1\en{\begin{align*}#1\end{align*}}
\def\bnn#1\enn{\begin{align}#1\end{align}}
\title{Stability in quadratic variation}
\author{Philip Kennerberg${}^*$ and Magnus Wiktorsson
\footnote{Institute of computing, Università della Svizzera italiana, Via la Santa 1, 6962 Lugano-Viganello Switzerland, philip.kennerberg@usi.ch}
\footnote{Centre for Mathematical Sciences, Lund University, Box 118 SE-22100, Lund, Sweden}}
\begin{document}
\maketitle

\begin{abstract}
Consider a sequence of cadlag processes $\{X^n\}_n$, and some fixed function $f$. If $f$ is continuous then under several modes of convergence $X^n\to X$ implies corresponding convergence of $f(X^n)\to f(X)$, due to continuous mapping. We study conditions (on $f$, $\{X^n\}_n$ and $X$) under which convergence of $X^n\to X$ implies $\left[f(X^n)-f(X)\right]\to 0$. While interesting in its own right, this also directly relates (through integration by parts and the Kunita-Watanabe inequality) to convergence of integrators in the sense $\int_0^t Y_{s-}df(X^n_s)\to\int_0^t Y_{s-}df(X_s)$. We use two different types of quadratic variations, weak sense and strong sense which our two main results deal with. For weak sense quadratic variations we show stability when $f\in 
C^1$, $\{X^n\}_n,X$ are Dirichlet processes defined as in \cite{NonCont} $X^n\xrightarrow{a.s.}X$, $[X^n-X]\xrightarrow{a.s.}0$ and $\{(X^n)^*_t\}_n$ is bounded in probability. For strong sense quadratic variations we are able to relax the conditions on $f$ to being the primitive function of a cadlag function but with the additional assumption on $X$, that the continuous and discontinuous parts of $X$ are independent stochastic processes (this assumption is not imposed on $\{X^n\}_n$ however), and $\{X^n\}_n,X$ are Dirichlet processes with quadratic variations along any stopping time refining sequence. To prove the result regarding strong sense quadratic variation we prove a new It\^o decomposition for this setting.
\end{abstract}

\section{Introduction}
Stability in quadratic variation, namely how $[f(X^n)-f(X)]\to 0$ when $X^n\to X$ for certain cadlag processes $\{X^n\}_n$, $X$ is intimately tied to convergence of $\int Y_-df(X^n)\to \int Y_-df(X)$ by the integration by parts and the Kunita-Watanabe inequality. Under assumptions on uniform convergence and that $f\in C^1$ results can be readily achieved by elementary means. Our focus in this manuscript is then to relax both of these conditions, i.e. assume pointwise convergence of $X^n\to X$ instead of uniform and relax the $C^1$ assumption on $f$. Most results will be concerning Dirichlet processes, although some more elementary result concern general cadlag processes with quadratic variation. We consider two types of quadratic variations, one which we term weak-sense is the type of quadratic variation that has typically used in the study of Dirichlet processes. This is indeed a weak type of quadratic variation as it only assumes that the limit (in probability) of the squared differentials only exists along a certain refining sequence (a sequence of fixed-time partitions with mesh tending to zero). The second kind of quadratic variation, we term quadratic variation in the strong sense assumes that approximating sum of squared differentials exists as uniform limit in probability along \textit{any} all sequences of stopping time partitions with mesh tending to zero. It should be pointed out that if $f$ is a transform that preserves the presence of quadratic variation for a process $X$ along its given fixed time refining sequence, then if $X$ has quadratic variation along all fixed time refining sequences so will $f(X)$. This however has no bearing on stopping time refining sequences.   
\\
\\
Continuous Dirichlet processes were first defined in \cite{Fol}, as a sum of a continuous local martingale and a continuous process with zero quadratic variation (along some given sequence of refining partitions). Discontinuous Dirichlet processes were later defined in \cite{Stricker} as the sum of a semimartingale and a process of zero quadratic variation (along some sequence of refining partitions). In \cite{NonCont} it was shown that a Dirichlet process transformed by a $C^1$ map is still a Dirichlet process. \cite{Low}, working with a different definition of Dirichlet processes (assuming strong quadratic variations) than previous authors showed that this space of processes was closed under a wide family of locally Lipschitz continuous functions and provided an It\^o formula for such transforms.

The results of \cite{Low} and \cite{NonCont} are quite different, both in the sense that they use different notions of quadratic variation and the fact that the decompositions are quite different. The decomposition in \cite{NonCont} gives a more detailed description of the non-continuous part. In the study of our problem, decompositions of the kind proved in \cite{NonCont} will be indispensable. We shall develop an analogous It\^o decomposition to this one, but in the setting of strong quadratic variation and with relaxed assumptions on the transform, $f$. It turns out however that one of the key components will be the results of \cite{Low} for proving this formula. In the setting of weak quadratic variation we show stability under the assumption of pointwise convergence $X^n\to X$ and $f\in C^1$. In the setting of strong quadratic variation we prove stability with a relaxed assumption on $f$ (being a primitive function of a cadlag function), pointwise convergence of $X^n\to X$, but some additional assumptions on $X$ ($X_s$ having a non-atomic distribution for each $s\in [0,t]$, and the continuous part and discontinuous part of $X$ being independent as stochastic processes).

\section{Preliminaries}
We assume that all of our processes are defined on a common filtered probability space $(\Omega,\mathcal{F},\{\mathcal{F}_t\}_{t \ge 0},\P)$ and that all the defined processes are adapted to the same filtration $\{\mathcal{F}_t\}_{t \ge 0}$ fulfilling the usual hypothesis.
The term refining sequence will refer to a sequence of partitions of $[0,t]$, $\{D_k\}_k$ say, such that $\lim_{k\to\infty}\max_{t_i\in D_k} |t_{i+1}-t_i|=0$. We say that a cadlag process $X$ admits to a quadratic variation $[X]$, in the weak sense, if there exists an increasing continuous process $[X]^c$ such that 
\begin{align}\label{quad}
[X]_s=[X]^c_s+\sum_{u\le s}(\Delta X_u)^2,
\end{align}
for every $0< s\le t$, there exists at least one refining sequence $\{D_k\}_k$ such that if we let  

$$(S_n(X))_s:=\sum_{t_i\in D_n,t_i \le s}\left( X_{t_{i+1}}-X_{t_i} \right)^2,$$ 
for every $0< s\le t$ (where we use the convention that $t_{i+1}=s$ if $t_i=s$) then 
\begin{align}\label{partialsums}
(S_n(X))_s\xrightarrow{\P}[X]_s\textit{ as }n\to\infty.
\end{align}
We say that two cadlag processes admit to a covariation $[X,Y]$ along $\{D_n\}_n$ if $[X,Y]^c$ is a continuous finite variation process such that
$$[X,Y]_s=[X,Y]^c_s+\sum_{u\le s}\Delta X_u\Delta Y_u,$$
for every $0< s\le t$ and if we let

$$S_n(X,Y)_s:=\sum_{t_i\in D_n,t_i \le s}\left( X_{t_{i+1}}-X_{t_i} \right)\left( Y_{t_{i+1}}-Y_{t_i} \right),$$ 
for every $0< s\le t$ then $S_n(X,Y)_s\xrightarrow{\P}[X,Y]_s$ as $n\to\infty$.
\\
\begin{defin}
  As in \cite{NonCont} we define a (non-continuous) Dirichlet process $X$ as a sum of a semimartingale $Z$ and an adapted continuous process $C$ with zero quadratic variation along some refining sequence.
\end{defin}
By this definition we see that Dirichlet processes are a subclass of the processes admitting to quadratic variation. Note however that this deomposition is not unique since it is possible to move any continuous part with finite variation between $Z$ and $C$.
Given a cadlag process $X_t$ with $t\ge 0$ and a stopping time $T$ we defined $X$ stopped at $T$ as $X^T_t=X_{t\wedge T}$, we also define the supremum process of $X$ as $X^*_t=\sup_{s\le t} |X_s|$.
\begin{defin}\label{loc}
  A property of a stochastic process is said to hold locally (pre-locally) if there exist a sequence of stopping times  $T_k$ increasing to infinity such that the property holds for the stopped process $X^{T_k}$ ($X^{T_k-}$) for each $k$.
\end{defin}
Let $\sigma_1$ and $\sigma_2$ be stopping times, a finite set of stopping times $P=\{\tau_0,...,\tau_n\}$ with $\sigma_1=\tau_0\le \tau_1\le ...\le \tau_n=\sigma_2$ is called a partition of $[\sigma_1,\sigma_2]$ and we define the mesh of $P$, as $\n P\n=\max_{1\le i\le n} \tau_i-\tau_{i-1}$. Given two cadlag processes, $X$ and $Y$, we denote $[X,Y]^{P}_s=\sum_{i=1}^n\left(X_{\tau_i\wedge s}-X_{\tau_{i-1}\wedge s}\right)\left(Y_{\tau_i\wedge s}-Y_{\tau_{i-1}\wedge s}\right)$ and $[X]^P_s=[X,X]^P_s$. 
\begin{defin}\label{partidef}
We say that $X$ and $Y$ admits to a covariation in the strong sense if there exists a FV process $[X,Y]$ such that 
$$\limsup_{\n P\n\le \delta, P\in \mathcal{P}_{[0,t]},\delta\to 0} \P\left(\left([X,Y]^P-[X,Y]\right)^*_t>\epsilon\right)=0,$$ 
for all $\epsilon>0$, where $\mathcal{P}_{[a,b]}$ denotes the set of partitions of $[a,b]$ (assuming $a\ge 0$ and $b\le t$). We also say that $X$ admits to a quadratic variation in the strong sense if $X$ and $X$ admits to a covariation in the strong sense.
\end{defin}
The above definition is equivalent to saying that for any sequence of partitions $\{P^n\}_n$ of $[0,t]$, such that $\n P^n \n\to 0$ a.s. as $n\to \infty$, we have $\lim_{n\to\infty} \P\left(\left([X,Y]^P-[X,Y]\right)^*_t>\epsilon\right)=0$ (this follows directly by proof by contradiction). We shall use both these equivalent properties but we prefer to define it as in \ref{partidef} as the $\limsup$ always exists (but might not be zero). 
\\
\\
\begin{defin}\label{calA}
Given a cadlag process $X$ with jump measure $\mu$ and compensation measure $\nu$ we shall let $\mathcal{A}=\left\{(s,\omega): s\le t,\nu\left(\omega,\{s\},\R\right)>0\right\}$, i.e. the set of purely predictable jumps for $X$.
\end{defin}
The following is a (special case) of Proposition 1.17 b) in \cite{JACOD}
\begin{prop}\label{version}
There exists a version of $\nu$ such that $\nu(\{s\},\R)\le 1$, for $s\le t$ and $\mathcal{A}$ is a thin set exhausted by a sequence of predictable times
\end{prop}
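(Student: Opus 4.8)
The plan is to recognise the jump measure $\mu$ of the cadlag process $X$ as an integer-valued random measure and to obtain Proposition~\ref{version} as the specialisation of Jacod's Proposition 1.17 b) to that setting. First I would record the structural feature that makes the constant $1$ appear: since $X$ is a genuine function of time, at most one jump occurs at any instant, so $\mu(\omega;\{s\}\times\R)=\mathbf{1}_{\{\Delta X_s(\omega)\neq 0\}}\le 1$, i.e.\ $\mu$ places at most unit mass on each fibre $\{s\}\times\R$. The quantity to control is $a_s(\omega):=\nu(\omega,\{s\},\R)$, the total mass the compensator assigns to the fibre at time $s$, and the task is to show $a_s\le 1$ and that $\mathcal{A}=\{a>0\}$ is exhaustible by predictable times.

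For the bound $a_s\le 1$ I would identify $a$ with a predictable projection. Writing the optional indicator process $s\mapsto \mathbf{1}_{\{\Delta X_s\neq 0\}}$, which takes values in $[0,1]$, the defining property of the compensator yields $a={}^p\!\left(\mathbf{1}_{\{\Delta X\neq 0\}}\right)$, the predictable projection of that indicator. Since the predictable projection of a process with values in $[0,1]$ again takes values in $[0,1]$ up to an evanescent set, we get $a_s\le 1$. This is precisely where the single-valuedness of $X$ (unit fibre mass of $\mu$) enters, and it is the reason the general statement of Jacod's proposition specialises to $\nu(\{s\},\R)\le 1$ here.

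For the second assertion I would show that $\mathcal{A}=\{a>0\}$ is a predictable thin set and then invoke the predictable cross-section (exhaustion) theorem. Predictability of $\mathcal{A}$ is immediate, since $a$ is the fibre mass of the predictable random measure $\nu$ and is therefore a predictable process. For thinness, I would decompose $\nu$ by jump size: for each $m$ the process $B^{(m)}_t:=\nu([0,t]\times\{|x|>1/m\})$ is predictable and increasing, and it is finite after the usual localisation because a cadlag path has only finitely many jumps exceeding $1/m$ on $[0,t]$; hence its set of time-jumps $\{\Delta B^{(m)}>0\}$ has countable sections. Because $\nu(\{s\}\times\{|x|>1/m\})\uparrow a_s$ as $m\to\infty$, the set $\mathcal{A}$ is a countable union of such sets and thus has countable sections. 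A predictable set with countable sections is thin, and by the predictable exhaustion theorem it can be written as a countable union of graphs of predictable times $\{T_k\}_k$ with disjoint graphs, giving the claimed exhaustion.

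The step I expect to be the main obstacle is the rigorous identification $a={}^p(\mathbf{1}_{\{\Delta X\neq 0\}})$ together with the passage from \emph{countable sections} to \emph{exhaustible by predictable times}: both rest on the full apparatus of predictable projections and the section theorem. In the write-up these are exactly the ingredients packaged in Jacod's Proposition 1.17 b), so the cleanest route is to verify that $\mu$ meets that proposition's hypotheses as an integer-valued random measure and then quote the fibre bound and the predictable exhaustion directly, specialising the conclusion to the unit fibre mass that our single-valued $X$ provides.
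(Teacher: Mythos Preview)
Your proposal is correct and aligns with the paper's treatment: the paper does not supply an independent proof but states the result as a special case of Proposition 1.17 b) in \cite{JACOD}, which is exactly the reference you identify and ultimately invoke. Your sketch goes further than the paper by outlining the underlying argument (predictable projection of $\mathbf{1}_{\{\Delta X\neq 0\}}$ for the bound, and predictable section for the exhaustion), but the approach is the same.
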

In this paper we shall always assume that we choose a version of $\nu$ according to Proposition \ref{version}.
\begin{lemma}\label{triangle}
Suppose $X^1,...,X^n$ are processes with quadratic variations along the same refining sequence and assume that $\sum_{k=1}^mX^k$ also has quadratic variation along this refining sequence for all $m\le n$ then
$$\left[\sum_{k=1}^nX^k\right]_t\le \left( \sum_{k=1}^n[X^k]_t^{\frac 12}\right)^2. $$
\end{lemma}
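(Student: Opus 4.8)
The plan is to reduce the statement to the elementary Minkowski inequality at the level of the discrete approximating sums and then pass to the limit. Fix the common refining sequence $\{D_n\}_n$ and a time $s\in(0,t]$. For each fixed $n$ and each $\omega$, the increments $X^k_{t_{i+1}}-X^k_{t_i}$ over the $t_i\in D_n$ with $t_i\le s$ form, for each $k$, a vector in a Euclidean space, and the approximating sum $(S_n(X^k))_s$ is precisely the squared $\ell^2$-norm of that vector. Since $\sum_{k=1}^n\left(X^k_{t_{i+1}}-X^k_{t_i}\right)$ is the increment of $\sum_{k=1}^nX^k$, Minkowski's inequality in $\ell^2$ gives, pathwise and for every $n$,
$$\left( S_n\Big(\sum_{k=1}^n X^k\Big)_s \right)^{1/2} \le \sum_{k=1}^n \left( S_n(X^k)_s \right)^{1/2}.$$

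Next I would pass to the limit $n\to\infty$. By hypothesis $\sum_{k=1}^n X^k$ has quadratic variation along $\{D_n\}_n$, so $S_n(\sum_{k=1}^n X^k)_s\xrightarrow{\P}[\sum_{k=1}^n X^k]_s$, and each $S_n(X^k)_s\xrightarrow{\P}[X^k]_s$. Applying the continuous mapping theorem to the square root (continuous on $[0,\infty)$) and summing the finitely many terms, the right-hand side converges in probability to $\sum_{k=1}^n[X^k]_s^{1/2}$ while the left-hand side converges in probability to $[\sum_{k=1}^n X^k]_s^{1/2}$.

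The one point requiring a little care is that the displayed inequality holds for each $n$ only pathwise, whereas the convergences are in probability. I would therefore extract a single subsequence along which all the finitely many sequences converge almost surely (possible since there are only finitely many of them), pass to the limit along that subsequence to obtain $[\sum_{k=1}^n X^k]_s^{1/2}\le\sum_{k=1}^n[X^k]_s^{1/2}$ almost surely, and note that the in-probability limits do not depend on the chosen subsequence. Squaring and taking $s=t$ yields the claim. I expect no serious obstacle here: the content is entirely the pathwise Minkowski inequality, and the only subtlety is this routine passage from an almost-sure inequality for each $n$ to an inequality between the in-probability limits.

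I would finally remark that this argument uses only that the full sum $\sum_{k=1}^n X^k$ has quadratic variation, not the intermediate sums. An alternative that exploits the full hypothesis is an induction on $n$: one writes $[\sum_{k=1}^{m+1} X^k]_t = [\sum_{k=1}^m X^k]_t + 2[\sum_{k=1}^m X^k, X^{m+1}]_t + [X^{m+1}]_t$, where the covariation is well defined by polarization because all three quadratic variations exist, and bounds the cross term by the discrete Cauchy--Schwarz inequality $|S_n(X,Y)_s|\le S_n(X)_s^{1/2}S_n(Y)_s^{1/2}$ passed to the limit, giving $|[\sum_{k=1}^m X^k, X^{m+1}]_t|\le[\sum_{k=1}^m X^k]_t^{1/2}[X^{m+1}]_t^{1/2}$ and hence the inductive step.
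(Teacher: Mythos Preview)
Your main argument via the pathwise Minkowski inequality on the approximating sums is correct; the only blemish is the double use of $n$ for both the number of processes and the partition index, which you should disambiguate. The subsequence trick to pass from a pathwise inequality valid for each partition to an a.s.\ inequality between the in-probability limits is exactly the routine step needed, and you handle it correctly.

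The paper does \emph{not} argue this way. It takes precisely the inductive route you sketch at the end: it observes that the assumption on all partial sums $\sum_{k=1}^m X^k$ guarantees the covariations $\bigl[\sum_{k=1}^m X^k,\,X^{m+1}\bigr]$ exist along the common refining sequence, expands $\bigl[\sum_{k=1}^{m+1} X^k\bigr]_t$ bilinearly, bounds the cross term by Kunita--Watanabe, completes the square, and invokes the induction hypothesis. Your direct Minkowski argument is actually cleaner and, as you point out, uses strictly less of the hypothesis --- only the existence of $[X^k]$ and of $\bigl[\sum_{k=1}^n X^k\bigr]$ is needed, not the intermediate partial sums. The paper's approach, on the other hand, stays at the level of the limiting objects and explains why the partial-sum hypothesis is there in the first place (to make the covariations well defined), which is perhaps more in keeping with how the lemma is later applied.
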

\begin{proof}
First of all we note that since $\sum_{k=1}^{m}X^k$ has a quadratic variation along the same refining sequence for each $m\le n$ it follows that the covariation $\left[\sum_{k=1}^{m}X^k,X^{m+1}\right]$ also exists along the same refining sequence. We will now prove the stated inequality by induction, the case $n=1$ is trivial. Assume the statement is true for $n=m$ then for $n=m+1$,
\begin{align*}
&\left[\sum_{k=1}^{m+1}X^k\right]_t =\left[\sum_{k=1}^{m}X^k\right]_t +2\left[\sum_{k=1}^{m}X^k,X^{m+1}\right]_t + \left[X^{m+1}\right]_t
\le \left[\sum_{k=1}^{m}X^k\right]_t +
\\
&2\left[\sum_{k=1}^{m}X^k\right]_t^{\frac 12}\left[X^{m+1}\right]^{\frac 12}_t + \left[X^{m+1}\right]_t=\left( \left[\sum_{k=1}^{m}X^k\right]^{\frac 12}_t +\left[X^{m+1}\right]^{\frac 12}_t\right)^2
\le \left(\sum_{k=1}^{m} [X_k]_t^{\frac 12} +\left[X^{m+1}\right]^{\frac 12}_t \right)^2
\\
&=\left(\sum_{k=1}^{m+1} [X_k]_t^{\frac 12} \right)^2,
\end{align*}
where we used the Kunita-Watanabe inequality in the second step.
\end{proof}

\section{Stability in quadratic variation}
If a sequence of cadlag processes $\{X^n\}_n$ converges in some sense to another cadlag process $X$ while we also have convergence of $\left[f(X^n)-f(X)\right]\to 0$, for some function $f$, we informally refer to this (as we have not specified in what sense this converges) as stability in quadratic variation. While we feel that stability in quadratic variation, in the sense we will explore, is interesting in its own right, it is also intimately tied to stability of integrators. One might be concerned with integrals of the form $\int Ydf(X^n)$, for some continuous function $f$ and wish to study the convergence of this integral as $X^n\to X$ in some sense. Due to the integration by parts formula
$$\int_0^t Y_{s-}df(X_s)=f(X_t)Y_t-f(X_0)Y_0-\int_0^tf(X_{s-})dY_s-[f(X),Y]_t $$
and by the Kunita-Watanabe inequality we have that if $[f(X)-f(X^n)]_t\to 0$ then $\int_0^t Y_{s-}df(X^n_s)$ converges to $\int_0^t Y_{s-}df(X_s)$ if and only if $\int_0^tf(X^n_{s-})dY_s\to\int_0^tf(X_{s-})dY_s$.
\\
Let us first consider a quite useful but elementary result regarding stability in quadratic variation.
The first very elementary result can be proven directly from first principles.
\begin{prop}
Suppose $\{X^n\}_n$ and $X$ have quadratic variations in either the weak (or the strong sense), $f\in C^1$, $(X^n-X)^*_t\to 0$ in probability and $[X^n-X]_t\to 0$ in probability. Then $[f(X^n)-f(X)]_t\to 0$ or in probability, with the quadratic variation being in the weak (strong) sense.
\end{prop}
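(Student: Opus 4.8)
The plan is to pass to the approximating sums and reduce everything, via a pathwise mean-value argument, to the two quantitative hypotheses $[X^n-X]_t\to0$ and $(X^n-X)^*_t\to0$, after first localising the supremum processes so that $f'$ may be bounded and used together with its modulus of continuity. Concretely, fix $\epsilon>0$ and $\theta>0$; since $X$ is cadlag, $X^*_t<\infty$ a.s., so there is $K$ with $\P(X^*_t>K)<\theta/4$, and on $\{X^*_t\le K\}\cap\{(X^n-X)^*_t\le 1\}$ one has $(X^n)^*_t\le K+1=:K'$. Writing $M:=\sup_{|x|\le K'}|f'(x)|<\infty$ and letting $\omega$ denote the modulus of uniform continuity of $f'$ on $[-K',K']$, both $M$ and $\omega$ are deterministic once $K'$ is fixed.

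For the pathwise step, I would use the fundamental theorem of calculus to write, for consecutive partition points $t_i<t_{i+1}$,
\begin{align*}
f(X^n_{t_{i+1}})-f(X^n_{t_i})=g^n_i\,(X^n_{t_{i+1}}-X^n_{t_i}),\qquad g^n_i:=\int_0^1 f'\!\big(X^n_{t_i}+u(X^n_{t_{i+1}}-X^n_{t_i})\big)\rmd u,
\end{align*}
and analogously $f(X_{t_{i+1}})-f(X_{t_i})=g_i(X_{t_{i+1}}-X_{t_i})$. Subtracting, and inserting $\pm\,g^n_i(X_{t_{i+1}}-X_{t_i})$, yields the identity
\begin{align*}
\Delta_i\big(f(X^n)-f(X)\big)=g^n_i\,\Delta_i(X^n-X)+(g^n_i-g_i)\,\Delta_iX,
\end{align*}
where $\Delta_iY:=Y_{t_{i+1}}-Y_{t_i}$. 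The key observation is that the two integrands defining $g^n_i$ and $g_i$ are evaluated at points whose difference equals the convex combination $(1-u)(X^n-X)_{t_i}+u(X^n-X)_{t_{i+1}}$, hence is bounded in modulus by $(X^n-X)^*_t$; consequently, on the localising event, $|g^n_i|\le M$ and $|g^n_i-g_i|\le\omega\big((X^n-X)^*_t\big)$ simultaneously for all $i$. Squaring the identity, using $(a+b)^2\le2a^2+2b^2$ and summing gives the pathwise bound
\begin{align*}
\big(S_m(f(X^n)-f(X))\big)_t\le 2M^2\,\big(S_m(X^n-X)\big)_t+2\,\omega\big((X^n-X)^*_t\big)^2\,\big(S_m(X)\big)_t.
\end{align*}

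I would then let the mesh tend to zero. Along the common refining sequence (weak sense) the three sums converge in probability to $[X^n-X]_t$, $[X]_t$ and $[f(X^n)-f(X)]_t$; passing to an a.s.-convergent subsequence on the localising event propagates the inequality to the limit, giving $[f(X^n)-f(X)]_t\le 2M^2[X^n-X]_t+2\,\omega((X^n-X)^*_t)^2[X]_t$ there. (Existence of $[f(X^n)-f(X)]_t$ is inherited from $f\in C^1$ together with the joint quadratic variations of $X^n$ and $X$, the latter available by polarisation since $[X^n-X]_t$ is assumed to exist; cf. \cite{NonCont,Low}.) Choosing $L$ with $\P([X]_t>L)<\theta/4$, then $\eta$ with $2\eta^2L\le\epsilon/2$, then $\delta\le1$ with $\omega(\delta)\le\eta$, and finally $n$ large enough that $\P((X^n-X)^*_t>\delta)$ and $\P(2M^2[X^n-X]_t>\epsilon/2)$ are each $<\theta/4$, forces $[f(X^n)-f(X)]_t\le\epsilon$ off an event of probability $<\theta$, which is the claim; the strong-sense case is identical, the only change being that the pathwise bound holds for every stopping-time partition $P$ and every $s\le t$ (its right-hand side being monotone in $s$), so that the uniform-in-probability limit yields $[f(X^n)-f(X)]_t$ directly. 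The step I expect to be most delicate is the bookkeeping in this limit passage rather than any single estimate: the constant $M$ and the bound $\omega((X^n-X)^*_t)$ are themselves random (they live on the localising event), so one must be careful about the order in which the three regimes — mesh $\to0$, modulus $\omega(\delta)\to0$, and $n\to\infty$ — are taken, and about propagating a pathwise inequality through a merely-in-probability (respectively uniform-in-probability) limit. Isolating a single localising event on which all the bounds are deterministic, and passing to a subsequence there before sending $n\to\infty$, is what makes the chaining legitimate.
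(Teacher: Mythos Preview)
Your proof is correct and follows essentially the same approach as the paper: localise, write increments of $f$ via the integral form of the mean value theorem, split the resulting product, and bound the two pieces by $M$ and by the modulus of continuity of $f'$ evaluated at $(X^n-X)^*_t$. The only cosmetic difference is that the paper inserts $\pm\, g_i\,\Delta_iX^n$ rather than $\pm\, g^n_i\,\Delta_iX$, so its second piece carries $\Delta_iX^n$ instead of $\Delta_iX$; this forces the paper into one extra triangle-inequality step to pass from $[X^n]_t$ back to $[X]_t+[X^n-X]_t$, a step your version avoids.
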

\begin{proof}
We the proof for weak sense quadratic variations, the strong sense case is analogous. By localization, we may without loss of generality assume that $X^*_t\le R$ for some $R\in\R^+$. Let $\delta>0$ be so small that if $x,y\in [-(R+\delta),R+\delta]$ and $|x-y|<\delta$ then $|f(x)-f(y)|<\epsilon$. Let $N$ be so large that $n\ge N$ implies that $[X^n-X]_t\le \epsilon$ and $(X^n-X)^*_t<\delta$. For any partition of $[0,t]$ such that $\max_i |t_i-t_{i-1}|<\delta$ and $n\ge N$ we have, 
\begin{align*}
&\sum_{i}\left|f(X_{t_i})-f(X^n_{t_i})-\left(f(X_{t_{i-1}})-f(X^n_{t_{i-1}})\right)\right|^2
\\
=
&\sum_{i}\left|\int_0^1 \left(f'\left(X_{t_i}+\theta\left(X_{t_i}-X_{t_{i-1}}\right)\right)\left(X_{t_i}-X_{t_{i-1}}\right)-f'\left(X^n_{t_i}+\theta\left(X^n_{t_i}-X^n_{t_{i-1}}\right)\right)\left(X^n_{t_i}-X^n_{t_{i-1}}\right)\right)d\theta\right|^2
\\
\le
&2\sum_{i}\left|\int_0^1 f'\left(X_{t_i}+\theta\left(X_{t_i}-X_{t_{i-1}}\right)\right)d\theta\left(X_{t_i}-X^n_{t_i}-(X_{t_{i-1}}-X^n_{t_{i-1}})\right)\right|^2
\\
+
&2\sum_{i}\left|\int_0^1 \left(f'\left(X_{t_i}+\theta\left(X_{t_i}-X_{t_{i-1}}\right)\right)-f'\left(X^n_{t_i}+\theta\left(X^n_{t_i}-X^n_{t_{i-1}}\right)\right)\right)d\theta\left(X^n_{t_i}-X^n_{t_{i-1}}\right)\right|^2
\\
\le
&2\sum_{i}M^2\left| X_{t_i}-X^n_{t_i}-(X_{t_{i-1}}-X^n_{t_{i-1}})\right|^2
+
2\sum_{i}\epsilon^2\left|X^n_{t_i}-X^n_{t_{i-1}}\right|^2.
\end{align*}
If we now let the mesh of the partition tend to zero in the left-most and right-most side above have have the following limits in probability
\begin{align*}
\P\left([f(X)-f(X^n)]_t\ge a\right)
&\le
\P\left(2M^2\left(\left[X-X^n\right]_t +\epsilon \right)+2\epsilon^2[X^n]_t \ge a\right)
\\
&\le
\P\left(2M^2\epsilon+4\epsilon^2[X]_t+[X-X^n]_t\left(2M^2+4\epsilon^2\right) \ge a\right),
\end{align*}
for any $a>0$. Letting $\epsilon\to 0$ we have $[f(X)-f(X^n)]_t
\le
2M^2[X-X^n]_t$ which converges to zero as $n\to\infty$.
\end{proof}
Due to the fact that the above argument makes explicit use of refining sequences (which converge in probability) we are tied to convergence in probability. In section \ref{main} we aim to find situations when we may improve to almost sure convergence. We shall also relax the conditions $(X^n-X)^*_t\to 0$ and/or $f\in C^1$. It turns out that in many (but a bit less general in terms of what processes it applies to) situations it suffices that $X^n_s\to X_s$ for $s\le t$.
\\
\\
The second stability result, while more elementary than our main results has a more elaborate proof, but it is one that is more or less just a simplified version of those. This result relaxes the condition of uniform convergence of $X^n\to X$ on compacts to point-wise convergence while requiring a stronger condition on $f$, $f\in C^2$. We are also able to improve the convergence to almost sure convergence since we are not relying on arguments making explicit use of the refining sequences. Key to this result is the following decomposition due to \cite{Fol}.
\begin{lemma}\label{OGFollm}
Suppose $X$ is a process in with quadratic variation in the weak sense, $f$ is a $C^2$ function and $\{D_k\}_k$ is a refining sequence then
\begin{align}\label{FolDe}
f(X_t)=f(X_0)+\sum_{s\le t}\left(\Delta f(X_s)-\Delta X_sf'(X_{s_-}) \right)+\int_0^tf'(X_{s_-})dX_s+\frac 12\int_0^tf''(X_{s_-})d[X]^c_s,
\end{align}
where $\int_0^tf'(X_{s_-})dX_s$ is the point-wise limit of the series $\sum_{t_i\in D_k}f'(X_{t_{i-1}})(X_{t_{i}}-X_{t_{i-1}})$.
\end{lemma}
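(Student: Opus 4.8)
The plan is to follow the pathwise argument of \cite{Fol}: expand $f(X_t)-f(X_0)$ as a telescoping sum over $D_k$, apply a second-order Taylor expansion to each increment, and pass to the limit $k\to\infty$. Since the weak-sense quadratic variation converges only in probability, I would first pass to a subsequence along which $(S_k(X))_s\to[X]_s$ holds almost surely simultaneously for all $s$ in a fixed countable dense set; by monotonicity in $s$ this gives convergence at every continuity point of $[X]$, and the asserted identity being an almost-sure equation, it suffices to argue pathwise on the corresponding full-measure event. Fix such an $\omega$. Then $X$ is a bounded c\`adl\`ag path on $[0,t]$, so its range lies in a compact set on which $f''$ is bounded, say by $M$, and uniformly continuous, with modulus $\omega_{f''}$.

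Telescoping and writing $D_k=\{0=t_0<\dots<t_N=t\}$,
$$f(X_t)-f(X_0)=\sum_{t_i\in D_k}\Big(f(X_{t_{i+1}})-f(X_{t_i})\Big)=A_k+R_k,$$
where $A_k=\sum_{t_i\in D_k}f'(X_{t_i})(X_{t_{i+1}}-X_{t_i})$ and $R_k=\sum_{t_i\in D_k}\big(f(X_{t_{i+1}})-f(X_{t_i})-f'(X_{t_i})(X_{t_{i+1}}-X_{t_i})\big)$, each summand of $R_k$ equalling $\tfrac12 f''(\zeta_i)(X_{t_{i+1}}-X_{t_i})^2$ for some intermediate point $\zeta_i$ by Taylor with Lagrange remainder. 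The core of the proof is to show that $R_k$ converges to $\tfrac12\int_0^t f''(X_{s-})\,d[X]^c_s+\sum_{s\le t}\big(\Delta f(X_s)-f'(X_{s-})\Delta X_s\big)$; the existence of $\lim_k A_k$ and its identification with $\int_0^t f'(X_{s-})\,dX_s$ then follow by subtraction.

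To analyze $R_k$, fix $\epsilon>0$ and split the partition intervals into those containing a large jump (a jump with $|\Delta X_s|>\epsilon$, of which there are finitely many) and the rest. On each large-jump interval the endpoints squeeze onto the jump time as $k\to\infty$, so $X_{t_i}\to X_{s-}$ and $X_{t_{i+1}}\to X_s$ and the summand converges to $\Delta f(X_s)-f'(X_{s-})\Delta X_s$; summing the finitely many such terms yields $\sum_{|\Delta X_s|>\epsilon}\big(\Delta f(X_s)-f'(X_{s-})\Delta X_s\big)$. On the remaining intervals I would compare $\tfrac12 f''(\zeta_i)(X_{t_{i+1}}-X_{t_i})^2$ with $\tfrac12 f''(X_{t_i})(X_{t_{i+1}}-X_{t_i})^2$: since the oscillation of a c\`adl\`ag path over small intervals free of large jumps is at most $2\epsilon$ once $k$ is large, the total difference is bounded by $\tfrac12\omega_{f''}(2\epsilon)\sum_i(X_{t_{i+1}}-X_{t_i})^2$, with limit $\tfrac12\omega_{f''}(2\epsilon)[X]_t$. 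The weighted sum $\tfrac12\sum f''(X_{t_i})(X_{t_{i+1}}-X_{t_i})^2$ converges to $\tfrac12\int_0^t f''(X_{s-})\,d[X]_s$ restricted to the continuous part plus the small jumps; here the use of the quadratic-variation hypothesis is essential, and the left-endpoint evaluation is what reproduces the correct value $f''(X_{s-})$ at the atoms of $d[X]$.

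Finally I would let $\epsilon\to0$. The large-jump sum converges to the full series $\sum_{s\le t}\big(\Delta f(X_s)-f'(X_{s-})\Delta X_s\big)$, which is absolutely convergent because $|\Delta f(X_s)-f'(X_{s-})\Delta X_s|\le\tfrac12 M(\Delta X_s)^2$ and $\sum_s(\Delta X_s)^2\le[X]_t<\infty$; the small-jump contribution to $\tfrac12\int_0^t f''(X_{s-})\,d[X]_s$ vanishes as the tail of a convergent series; and the error $\tfrac12\omega_{f''}(2\epsilon)[X]_t\to0$. What remains is exactly $\tfrac12\int_0^t f''(X_{s-})\,d[X]^c_s+\sum_{s\le t}\big(\Delta f(X_s)-f'(X_{s-})\Delta X_s\big)$, giving \eqref{FolDe}. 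The main obstacle is precisely this interchange of the limits $k\to\infty$ and $\epsilon\to0$: one must dominate the small-increment remainder uniformly in $k$ via uniform continuity of $f''$ and the convergence of the squared-increment sums, while simultaneously isolating the finitely many large jumps and passing to their limiting contributions — this is where the c\`adl\`ag structure and the weak-sense quadratic variation do the real work.
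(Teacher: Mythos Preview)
The paper does not supply its own proof of this lemma: it is stated as ``the following decomposition due to \cite{Fol}'' and then used without further argument. Your proposal is a faithful reconstruction of F\"ollmer's original pathwise proof, adapted to the in-probability convergence of the weak-sense quadratic variation via the subsequence trick; this is exactly the intended route and your sketch is correct in all essentials.
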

\begin{prop}\label{PropVar}
Suppose $X$  and $\{X^n\}_n$ are processes admitting to quadratic variations along the same refining sequence, $[X^n-X]_t\to 0$ a.s., $X^n_s\to X_s$ a.s., for $s\le t$, $\{(X^n)^*_t\}_n$ is bounded in probability and $f$ is a $C^2$ function then
$$[f(X^n)-f(X)]_t \to 0 \hspace{2mm} a.s..$$
\end{prop}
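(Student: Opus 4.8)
The plan is to use the Föllmer decomposition (Lemma \ref{OGFollm}) to split the quadratic variation of $f(X^n)-f(X)$ into its continuous part and its jump part, and to show each tends to $0$ almost surely. Throughout I would work on the almost sure event on which $[X^n-X]_t\to 0$ and $X^n_s\to X_s$ for all $s\le t$. The first preliminary observation is that on this event one automatically obtains convergence of the \emph{left} limits: since $\sum_{s\le t}(\Delta X^n_s-\Delta X_s)^2\le [X^n-X]_t\to 0$, each individual jump obeys $\Delta X^n_s\to\Delta X_s$, and combined with $X^n_s\to X_s$ this yields $X^n_{s-}=X^n_s-\Delta X^n_s\to X_s-\Delta X_s=X_{s-}$ for every $s$. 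Using that $\{(X^n)^*_t\}_n$ is bounded in probability together with $X^*_t\le R$, a localization (the reduction to an almost sure statement following the usual exhaustion of $\Omega$ by events of probability close to one) lets me treat $f'$ and $f''$ as bounded, say by $M$ and $M_2$, on the relevant range.

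For the jump part, the jump of $f(X^n)-f(X)$ at $s$ is $\Delta f(X^n_s)-\Delta f(X_s)$, and by the fundamental theorem of calculus $\Delta f(X^n_s)=\phi^n_s\,\Delta X^n_s$, $\Delta f(X_s)=\phi_s\,\Delta X_s$, where $\phi^n_s=\int_0^1 f'(X^n_{s-}+\theta\Delta X^n_s)\,d\theta$ and $\phi_s=\int_0^1 f'(X_{s-}+\theta\Delta X_s)\,d\theta$ are both bounded by $M$. Splitting $\phi^n_s\Delta X^n_s-\phi_s\Delta X_s=\phi^n_s(\Delta X^n_s-\Delta X_s)+(\phi^n_s-\phi_s)\Delta X_s$ gives
\begin{align*}
\sum_{s\le t}\left(\Delta f(X^n_s)-\Delta f(X_s)\right)^2\le 2M^2\sum_{s\le t}(\Delta X^n_s-\Delta X_s)^2+2\sum_{s\le t}(\phi^n_s-\phi_s)^2(\Delta X_s)^2.
\end{align*}
The first sum is at most $2M^2[X^n-X]_t\to 0$. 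In the second sum the summands are dominated by the fixed summable sequence $4M^2(\Delta X_s)^2$ (since $\sum_{s\le t}(\Delta X_s)^2\le [X]_t<\infty$), while the left-limit convergence and continuity of $f'$ give $\phi^n_s\to\phi_s$ for each $s$; dominated convergence for the finite measure $\sum_s(\Delta X_s)^2\delta_s$ then forces the second sum to $0$.

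For the continuous part I would invoke Lemma \ref{OGFollm}, which identifies the continuous component of the quadratic variation through the chain rule $[f(X),g(Y)]^c=\int_0^{\cdot} f'(X_{s-})g'(Y_{s-})\,d[X,Y]^c$; by bilinearity,
\begin{align*}
[f(X^n)-f(X)]^c_t=\int_0^t f'(X^n_{s-})^2\,d[X^n]^c_s-2\int_0^t f'(X^n_{s-})f'(X_{s-})\,d[X^n,X]^c_s+\int_0^t f'(X_{s-})^2\,d[X]^c_s.
\end{align*}
Writing $f'(X^n_{s-})=f'(X_{s-})+(f'(X^n_{s-})-f'(X_{s-}))$ and collecting terms rearranges this into $\int_0^t f'(X_{s-})^2\,d[X^n-X]^c_s$, a cross term, and $\int_0^t (f'(X^n_{s-})-f'(X_{s-}))^2\,d[X^n]^c_s$. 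The first is $\le M^2[X^n-X]_t\to 0$. For the last integral I use the measure inequality $d[X^n]^c\le 2\,d[X^n-X]^c+2\,d[X]^c$ (from the Kunita--Watanabe inequality applied to the increment measures of $X^n=(X^n-X)+X$): the $d[X^n-X]^c$ contribution is $\le 8M^2[X^n-X]_t\to 0$, and $\int_0^t(f'(X^n_{s-})-f'(X_{s-}))^2\,d[X]^c_s\to 0$ by dominated convergence against the \emph{fixed} finite measure $d[X]^c$, using $f'(X^n_{s-})\to f'(X_{s-})$ and the bound $4M^2$. The cross term is handled by Kunita--Watanabe, being bounded by $M\,(\int_0^t(f'(X^n_{s-})-f'(X_{s-}))^2\,d[X^n]^c_s)^{1/2}([X^n-X]^c_t)^{1/2}$, hence also tending to $0$. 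Adding the jump and continuous parts yields $[f(X^n)-f(X)]_t\to 0$ a.s.

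The main obstacle is the lack of uniform convergence: unlike the previous proposition one cannot compare $f'(X^n)$ and $f'(X)$ increment by increment, and the convergence must instead be pushed through the fixed measures $d[X]^c$ and $\sum_s(\Delta X_s)^2\delta_s$ by dominated convergence. The crucial enabling step, and the only place where the hypotheses $[X^n-X]_t\to 0$ and $X^n_s\to X_s$ are genuinely used together, is the derivation of the left-limit convergence $X^n_{s-}\to X_{s-}$ at the jump times of $X$, since without it neither dominated-convergence argument would possess a pointwise limit to converge to.
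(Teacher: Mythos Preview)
Your argument is correct and takes a genuinely different route from the paper's. Both proofs start from the F\"ollmer decomposition of Lemma~\ref{OGFollm} and both use the key preliminary that $X^n_{s-}\to X_{s-}$ a.s.\ (from $|\Delta(X^n-X)_s|^2\le[X^n-X]_t$ together with $X^n_s\to X_s$). The paper, however, applies Lemma~\ref{triangle} directly to the difference of the two F\"ollmer decompositions, producing five terms: two coming from the split $\int f'(X^n_-)\,dX^n-\int f'(X_-)\,dX=\int f'(X^n_-)\,d(X^n-X)+\int(f'(X^n_-)-f'(X_-))\,dX$, and three coming from the jump sums. The jump terms are then handled via the truncation device $L(r)$ from the proof of Theorem~\ref{a.s.ettf}, splitting into finitely many large jumps (pointwise convergence) and small jumps (uniformly small via the analogue of \eqref{jumptrick}). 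You instead separate $[f(X^n)-f(X)]$ into its continuous part and its jump part and push each through dominated convergence against a \emph{fixed} finite measure---$d[X]^c$ for the continuous part, $\sum_s(\Delta X_s)^2\delta_s$ for the jumps---so no truncation is needed; your continuous-part computation also proceeds via the chain rule $[f(X)]^c=\int f'(X_-)^2\,d[X]^c$ and bilinearity rather than through the F\"ollmer integrals themselves. Your approach is more self-contained and arguably cleaner in this $C^2$ setting; the paper's approach has the advantage of being a literal simplification of the machinery built for Theorem~\ref{a.s.ettf}, so the two proofs share structure.
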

Due to partial overlap with the proof of Theorem \ref{a.s.ettf}, we post-pone the proof to the end of this manuscript.

\section{It\^o Decompositions and lemmas}
In this section we gather the necessary tools for our main results. These consists of our It\^o decomposition and some lemmas regarding independence of stochastic processes.
The following assumption was originally used by \cite{Low} and since we will use one of his results we will need it.
%

\begin{asu}\label{aslow}
Suppose that $f$ is a locally Lipschitz continuous function and $X$ is a dirichlet process such that
\begin{align}\label{lbllow}
\int_0^t 1_{\{s:X_s\not\in \mathsf{diff}(f)\}}d[X]^c_s=0 \hspace{2mm}a.s.,
\end{align}
where $\mathsf{diff}(f)$ is the set of points where $f$ is differentiable
\end{asu}




Throughout the remainder of this paper, whenever we write $f'(x)$ we will be referring to the right-hand derivative of $f$, $f'(x)=\lim_{h\to 0^+} \frac{f(x+h)-f(x)}{h}$, if $f$ is in-fact differentiable at $x$ then this becomes, as we all know, the usual derivative. The following Lemma is an application of (a slight modification of) Theorem 2.1 in \cite{Low}.
\begin{lemma}\label{LowLemma}
Suppose that $f$ is a locally Lipschitz continuous function that has right-hand derivatives everywhere and $X$ is a Dirichlet process such that they satisfy Assumption \ref{aslow} then
$$[f(X)]_s=\int_0^s f'(X_{u-})^2d[X]^c_s+\sum_{u\le s} \Delta f(X_u)^2.$$
\end{lemma}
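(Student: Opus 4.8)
The plan is to read the identity off Theorem 2.1 of \cite{Low}, the only genuinely new point being the passage from two-sided to right-hand derivatives, for which Assumption \ref{aslow} is tailor-made. Using the localization of Definition \ref{loc} I would first reduce to a globally Lipschitz $f$ and a Dirichlet process with bounded ingredients, so that all the integrals below are genuine rather than merely local, and write the decomposition $X=Z+C$ with $Z$ a semimartingale (local-martingale part $M$) and $C$ of zero quadratic variation.

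First I would invoke Low's theorem in the case where $f$ is differentiable everywhere. Low's It\^o formula exhibits $f(X)$ as a Dirichlet process whose local-martingale part is $\int_0^\cdot f'(X_{u-})\,dM_u$. Since the finite-variation part of $Z$ and the zero-quadratic-variation part $C$ contribute nothing to the continuous bracket, one has $[X]^c=[M]^c$, and taking quadratic variations yields $[f(X)]^c_s=\int_0^s f'(X_{u-})^2\,d[X]^c_u$. For the jump part I would use that any process admitting a quadratic variation has jump part equal to the sum of its squared jumps, by the very structure \eqref{quad}; since $f$ is continuous, $\Delta(f(X))_u=f(X_u)-f(X_{u-})=\Delta f(X_u)$, so this jump part is $\sum_{u\le s}\Delta f(X_u)^2$. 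Adding the two contributions gives the stated formula whenever $f$ is differentiable everywhere.

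It remains to weaken differentiability to the mere existence of right-hand derivatives, which is exactly where Assumption \ref{aslow} enters. The right-hand derivative $f'$ appears in the conclusion only through the integrand $f'(X_{u-})^2$ in the continuous part, and this integral is insensitive to the values of $f'$ on the non-differentiability set $N=\R\setminus\mathsf{diff}(f)$: because $[X]^c$ is continuous it assigns zero mass to the countable set of jump times of $X$, so $X_{u-}=X_u$ holds $d[X]^c$-a.e., and Assumption \ref{aslow} then forces $\int_0^s 1_{\{X_{u-}\in N\}}\,f'(X_{u-})^2\,d[X]^c_u=0$. On $\mathsf{diff}(f)$ the right-hand derivative coincides with the genuine derivative, so the continuous part is $d[X]^c$-a.e. determined by values of $f'$ where it is a bona fide derivative, and the jump part does not involve $f'$ at all; this is precisely what permits Low's approximation argument to be re-run with right-hand derivatives in place of two-sided ones.

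The main obstacle is exactly this last re-run. In Low's proof the increments $f(X_{\tau_i})-f(X_{\tau_{i-1}})$ are linearized through $f'$, an expansion that with only right-hand derivatives is immediate for increments in the increasing direction but must be justified for decreasing increments and, above all, for partition intervals whose endpoints straddle a value in $N$. The crux is to bound the aggregate contribution of these straddling increments by $\int_0^s 1_{\{X_{u-}\in N\}}\,d[X]^c_u$, which vanishes by Assumption \ref{aslow}; once this estimate is secured, the bookkeeping of the continuous and jump parts is routine.
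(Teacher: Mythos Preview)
Your overall architecture is right—invoke Lowther's decomposition and read off the quadratic variation—but you seem to have slightly misread the hypotheses of \cite{Low}. Lowther's Theorem 2.1 does not require $f$ to be differentiable everywhere; it applies to any locally Lipschitz $f$ under precisely Assumption \ref{aslow}, with a supremum-type derivative $D_xf$ standing in the integrand. The paper therefore skips your ``differentiable everywhere'' warm-up entirely: it observes that Lowther's argument goes through verbatim with the right-hand derivative in place of $D_x$ (the only change being in Lemma 2.7 of \cite{Low}, where one takes $\xi$ to be the right-derivative limit rather than a supremum), applies this modified theorem directly to get $f(X_s)=\int_0^s f'(X_{u-})\,dZ_u+V_s$ with $[V]^c=0$, and then computes $[f(X)]^c_s=\int_0^s f'(X_{u-})^2\,d[Z]^c_u=\int_0^s f'(X_{u-})^2\,d[X]^c_u$ since $[C]=0$. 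The jump part is added on exactly as in your second paragraph.

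Your final paragraph anticipates re-running Lowther's approximation and worries about ``straddling increments'', but this is exactly the difficulty Lowther's proof already absorbs via Assumption \ref{aslow}; the right-hand derivative is as serviceable a stand-in for $D_x$ as any other, and the occupation-time control you sketch is already built into his argument. So your proposal is not wrong and reaches the same conclusion, but it takes a two-stage detour (differentiable case first, then an extension you flag as the ``main obstacle'') to what the paper treats as a one-line modification of an off-the-shelf theorem.
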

\begin{proof}
We first note that Theorem 2.1. in \cite{Low} works just as well if we assume that $f$ has pointwise right-hand derivatives everywhere and use this as a definition of $D_x$ in place of the supremum-derivative (in Lemma 2.7. we merely let $\xi$ be be the corresponding limit for the right-derivative instead of a supremum). According to Theorem 2.1. in \cite{Low} we have that if $X$ is sum of a semimartingale and a process admitting a quadratic variation, with zero continuous quadratic variation then
\begin{align}\label{loweq}
f(X_s)=\int_0^sf'(X_{u-})dZ_u+V_s,
\end{align} 
where $V_s$ is a cadlag process that has a quadratic variation and zero continuous quadratic variation. Since $X=Z+C$ and $[C]^c=[C]=0$, $X$ does indeed satisfy this assumption. Note that since $f(X)$ has a quadratic variation, we must have that $[f(X)]_s=[f(X)]^c_s+\sum_{u\le s} \Delta f(X_u)^2$. On the other hand, if we compute the quadratic variation of \ref{loweq} we get
\begin{align}
[f(X)]_s=\int_0^s f'(X_{u-})^2d[Z]^c_s+\sum_{u\le s}  f'(X_{u-})^2(\Delta Z_u)^2+2\sum_{u\le s}  f'(X_{u-})^2\Delta Z_u\Delta V_u+\sum_{u\le s} \Delta V_u^2
\end{align}
and therefore we may conclude that $[f(X)]^c_s=\int_0^s f'(X_{u-})^2d[Z]^c_s=\int_0^s f'(X_{u-})^2d[X]^c_s$ from which the result follows.
\end{proof}

\begin{lemma}\label{chainrule}
If $C$ is a continuous process with zero quadratic variation in the strong sense, $Z$ is a semimartingale and $g$ is a continuous function then $\int_0^.Z_-g(C)dC$ is a well-defined ucp limit of its approximating Riemann-Stieltjes sums and it has zero quadratic variation in the strong sense.
\end{lemma}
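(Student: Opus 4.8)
The plan is to reduce the statement to the calculus of a continuous zero-quadratic-variation integrator via an integration-by-parts representation. First I would localize: by Definition \ref{loc} and the usual stopping argument we may assume $Z^*_t\le R$ and $C^*_t\le R$ for a constant $R$, so that $C$ takes values in a fixed compact interval on which $g$ is bounded and uniformly continuous. Fix a primitive $G$ of $g$ (which exists and is $C^1$ since $g$ is continuous) and set $W_s:=G(C_s)-G(C_0)$, a continuous adapted process with $W_0=0$. The first key observation is that $W$ itself has zero quadratic variation in the strong sense: for any partition $P=\{\tau_i\}$ the mean value theorem gives $W_{\tau_i}-W_{\tau_{i-1}}=g(\zeta_i)(C_{\tau_i}-C_{\tau_{i-1}})$ for an intermediate $\zeta_i$, whence $[W]^P_s\le\n g\n_\infty^2\,[C]^P_s$, and the right-hand side tends to $0$ ucp because $C$ has zero strong quadratic variation.

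Next I would define the candidate integral through integration by parts, $I_s:=Z_sW_s-Z_0W_0-\int_0^s W_{u-}\,dZ_u$, all terms of which are well defined ($\int W_-\,dZ$ is an ordinary semimartingale integral and $Z_0W_0=0$). To identify $I$ as a ucp limit I use the discrete product rule: writing $\Delta_iX=X_{\tau_i\wedge s}-X_{\tau_{i-1}\wedge s}$, summation by parts gives $\sum_i Z_{\tau_{i-1}}\Delta_iW=Z_sW_s-Z_0W_0-\sum_i W_{\tau_{i-1}}\Delta_iZ-\sum_i\Delta_iZ\,\Delta_iW$. As $\n P\n\to0$ the first sum on the right converges ucp to $\int_0^sW_-\,dZ$ by the standard convergence of Riemann sums for semimartingale integrals, and the last sum converges to the covariation $[Z,W]_s$, which vanishes since $|[Z,W]|\le[Z]^{1/2}[W]^{1/2}=0$ by the Kunita--Watanabe inequality and $[W]=0$. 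Hence $\sum_i Z_{\tau_{i-1}}\Delta_iW\to I$ ucp.

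It then remains to compare these sums with the genuine approximating sums $\Sigma_P(s)=\sum_i Z_{\tau_{i-1}}g(C_{\tau_{i-1}})\Delta_iC$. Their difference is $-\sum_i Z_{\tau_{i-1}}R_i$ with $R_i=\Delta_iW-g(C_{\tau_{i-1}})\Delta_iC=\int_{C_{\tau_{i-1}}}^{C_{\tau_i}}\bigl(g(y)-g(C_{\tau_{i-1}})\bigr)\,dy$, so everything reduces to showing $\sum_i Z_{\tau_{i-1}}R_i\to0$ ucp. This is the main obstacle: the naive bound $\bigl|\sum_i Z_{\tau_{i-1}}R_i\bigr|\le Z^*_t\,\eta(\n P\n)\sum_i|\Delta_iC|$, where $\eta(\n P\n)\to0$ is a modulus of continuity of $g$ on the localized range, involves the total variation of $C$ along $P$, which need not be finite. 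The delicate point is therefore to trade the modulus of continuity of $g$ against the zero quadratic variation of $C$ rather than against its (possibly infinite) variation. I would do this by approximating $g$ uniformly on the localized range by Lipschitz functions $g_k$: for Lipschitz $g_k$ one has $|R_i^{k}|\le L_k|\Delta_iC|^2$, so the corresponding remainder is bounded by $Z^*_tL_k[C]^P\to0$, which settles the case of each $g_k$; the passage $k\to\infty$ is then carried out not on the Riemann sums directly but on the structural representation $I^k=Z\,W^k-\int W^k_-\,dZ$, using that $g_k\to g$ uniformly forces $W^k\to W$ ucp and hence $I^k\to I$ ucp by continuity of the stochastic integral. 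Reconciling the two limits ($\n P\n\to0$ and $k\to\infty$) is exactly where the strong, uniform-over-partitions nature of the zero quadratic variation of $C$ must be exploited to keep the approximation error controlled.

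Finally, to prove that $I$ has zero strong quadratic variation I would estimate $[I]^Q$ directly for an arbitrary partition $Q=\{\sigma_j\}$. Applying the integration-by-parts representation cell by cell gives $I_{\sigma_j}-I_{\sigma_{j-1}}=Z_{\sigma_{j-1}}\Delta_jW+e_j$ with $e_j=\int_{\sigma_{j-1}}^{\sigma_j}(W_{\sigma_j}-W_u)\,dZ_u$, so that $[I]^Q_t\le 2(Z^*_t)^2[W]^Q_t+2\sum_j e_j^2$. The first term tends to $0$ ucp since $[W]=0$; for the second, the integrand of $e_j$ is uniformly bounded by the path modulus of continuity $\omega_W(\n Q\n)\to0$, whence $\sum_j\E[e_j^2]\le\omega_W(\n Q\n)^2\,\E[[Z]_t]\to0$ after the usual splitting of $Z$ into its martingale and finite-variation parts. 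This yields $[I]^Q_t\to0$ ucp, i.e. the integral has zero quadratic variation in the strong sense, completing the plan.
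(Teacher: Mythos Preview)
Your architecture mirrors the paper's almost exactly: localize, pass to the primitive $G$ of $g$, show $W=G(C)$ has zero strong quadratic variation, realise $\int Z_-\,dW$ via integration by parts as $ZW-\int W_-\,dZ$ (with $[Z,W]=0$), and then identify this with the Riemann sums $\sum Z_{\tau_{i-1}}g(C_{\tau_{i-1}})\Delta_iC$ by controlling the Taylor-type remainder $\sum Z_{\tau_{i-1}}R_i$. For the zero quadratic variation of the integral you do what the paper does: write each increment over a cell as a ``main term'' plus an integral against $dZ$ and control the square sum of the latter via the martingale/finite-variation decomposition of $Z$. One small slip: your cell formula $I_{\sigma_j}-I_{\sigma_{j-1}}=Z_{\sigma_{j-1}}\Delta_jW+e_j$ with $e_j=\int_{\sigma_{j-1}}^{\sigma_j}(W_{\sigma_j}-W_u)\,dZ_u$ has an anticipating integrand, so your $L^2$ bound on $\sum e_j^2$ is not immediate; the paper uses the algebraically equivalent decomposition $Z_{\tau_i}\Delta_iC'-\int_{\tau_{i-1}}^{\tau_i}(C'_s-C'_{\tau_{i-1}})\,dZ_s$, whose integrand is adapted, and then the $L^2$ estimate goes through cleanly.

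The substantive gap is exactly where you flag it. For Lipschitz $g_k$ you correctly get $\bigl|\sum Z_{\tau_{i-1}}R_i^k\bigr|\le Z^*_t L_k[C]^P_t\to 0$, and $I^k\to I$ follows from the structural representation. But to conclude $\Sigma_P\to I$ you must still interchange the limits, and neither direction closes: the difference $\Sigma_P-\Sigma_P^k=\sum Z_{\tau_{i-1}}(g-g_k)(C_{\tau_{i-1}})\Delta_iC$ is only bounded by $\|g-g_k\|_\infty Z^*_t\sum_i|\Delta_iC|$, which involves the first variation of $C$, and zero \emph{quadratic} variation gives no control over $\sum_i|\Delta_iC|$; in the other direction your bound $Z^*_tL_k[C]^P_t$ is not uniform in $k$ because $L_k\to\infty$. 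Invoking ``the strong, uniform-over-partitions nature'' of $[C]=0$ does not help here, since the obstruction is first variation, not quadratic variation. The paper avoids the Lipschitz-approximation detour entirely at this step: it bounds the factor $\int_0^1 g(C_{\tau_{i-1}}+\theta\Delta_iC)\,d\theta-g(C_{\tau_{i-1}})$ uniformly by $\epsilon$ using uniform continuity of $g$ on the localized range and then appeals directly to the already-established ucp convergence of $\sum Z_{\tau_{i-1}}\Delta_iC$ to $\int Z_-\,dC$ to conclude that the remainder vanishes. Whatever one thinks of the brevity of that passage, note that in the only application of the lemma (the proof of Theorem~\ref{absolut}) $g$ is a monomial $x^{l_2}$, hence locally Lipschitz; in that regime your single-step bound $|R_i|\le L|\Delta_iC|^2$ already gives $\sum Z_{\tau_{i-1}}R_i\to 0$ with no double limit needed.
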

\begin{proof}
First we establish the fact that if $C'$ is a process with strong quadratic variation and $Z'$ is a semimartingale then the integral $\int Z'_-dC'$ exists as the ucp limit of
$$ \sum_{\tau_i\in P^n}Z'_{\tau_{i-1}}\left(C'_{\tau_i}-C'_{\tau_{i-1}}\right),$$
for any sequence of partitions $\{P_n\}_n$ on $[0,t]$ with $\n P_n\n\to 0$. Moreover $\int_0^.Z_-dC'$ has zero quadratic variation in the strong sense. By the usual re-arrangement of the Riemann sums for integration by parts formula, we have
\begin{align*}
\sum_{\tau_i\in P^n}Z'_{\tau_{i-1}}\left(C'_{\tau_i}-C'_{\tau_{i-1}}\right)
=Z'C'-
 \sum_{\tau_i\in P^n}\left(Z'_{\tau_i}-Z'_{\tau_{i-1}}\right)\left(C'_{\tau_i}-C'_{\tau_{i-1}}\right) -\sum_{\tau_i\in P^n}C'_{\tau_{i-1}}\left(Z'_{\tau_i}-Z'_{\tau_{i-1}}\right)
\end{align*}
which converges in ucp to $Z'C'-[Z',C']-\int_0^.C'_-dZ'$, so this left-hand side is well-defined and a.s. finite and we therefore have
$$\int_0^sZ'_{u-}dC'_u=Z'C'-[Z',C']-\int_0^sC'_udZ'_u .$$
We will now show that $\int Z'_-dC'$ has zero quadratic variation.
By pre-local stopping we may assume $Z'\le L$ on $[0,t]$. Given any sequence of partitions of $[0,t]$, $\{P^n\}_n$ with $P^n=\{\tau_1^n,...,\tau_{|P^n|}^n\}$, such that $\n P^n\n\to 0$ as $n\to\infty$, we have
\begin{align*}
\left(\sum_{\tau_i^n\le .}\left(\int_{0}^{\tau_i} Z'_{s-}dC'_s-\int_{0}^{\tau_{i-1}} Z'_{s-}dC'_s\right)^2\right)^*_t
&=\left(\sum_{\tau_i^n\le .}\left(C'_{\tau_i}Z'_{\tau_i}-\int_{\tau_{i-1}}^{\tau_i} C'_sdZ'_s-C'_{\tau_{i-1}}Z'_{\tau_{i-1}}\right)^2\right)^*_t
\\
&=\left(\sum_{\tau_i^n\le .}\left(Z'_{\tau_i}(C'_{\tau_i}-C'_{\tau_{i-1}})-\int_{\tau_{i-1}}^{\tau_i} (C'_s-C'_{\tau_{i-1}})dZ'_s\right)^2\right)^*_t
\\
&\le
2L\left(\sum_{\tau_i^n\le .}\left(C'_{\tau_i}-C'_{\tau_{i-1}}\right)^2\right)^*_t
+
2\left(\sum_{\tau_i^n\le .}\left(\int_{\tau_{i-1}}^{\tau_i} (C'_s-C'_{\tau_{i-1}})dZ'_s\right)^2\right)^*_t
\\
&\le
2L\left(\sum_{\tau_i^n\le .}\left(C'_{\tau_i}-C'_{\tau_{i-1}}\right)^2\right)^*_t
+
4\left(\sum_{\tau_i^n\le .}\left(\int_{\tau_{i-1}}^{\tau_i} (C'_s-C'_{\tau_{i-1}})dV_s\right)^2\right)^*_t
\\
&+
4\left(\sum_{\tau_i^n\le .}\left(\int_{\tau_{i-1}}^{\tau_i} (C'_s-C'_{\tau_{i-1}})dM_s\right)^2\right)^*_t
\end{align*}
The term $\left(\sum_{\tau_i^n\le .}\left(C'_{\tau_i}-C'_{\tau_{i-1}}\right)^2\right)^*_t$ converges in probability to $[C']_t=0$. By pre-localization we may assume $[M]_t\le L_1$ and $Var(V)\le L_2$ (where $Var(.)$ denotes the total variation on $[0,t]$). By letting  $C'^n_s=\sum_{i=1}^{|P^n|}C'_{\tau^n_{i-1}}1_{[\tau^n_{i-1},\tau^n_i)}$ we have,
$$ \left(\sum_{\tau_i^n\le .}\left(\int_{\tau_{i-1}}^{\tau_i} (C'_s-C'_{\tau_{i-1}})dV_s\right)^2\right)^*_t
\le 
\sum_{i=1}^{|P^n|}\left(\int_{\tau_{i-1}}^{\tau_i} (C'_s-C'_{\tau_{i-1}})dV_s\right)^2
\le 
|P^n|L_2^24\left((C'-C'^n)^*_t\right)^2,$$
which converges to zero in probability. Let $\tilde{\tau}^n_i=\tau^n_i$ for $i=1,...,|P^n|$ and for $i>|P^n|$ we let $\tilde{\tau}^n_i=\tau^n_{|P^n|}$ (this will just be used to exchange a sum with a random number of elements for a deterministic series). By stopping we may assume $(C')^*_t\le L_3$. Due to Corollary 3 of chapter II.6 of \cite{PRT} ,
\begin{align}\label{dM}
\E\left[\left(\sum_{\tau_i^n\le .}\left(\int_{\tau_{i-1}}^{\tau_i} (C'_s-C'_{\tau_{i-1}})dM_s\right)^2\right)^*_t\right]
&=
\E\left[\sum_{i=1}^\infty\left(\int_{\tilde{\tau}^n_{i-1}}^{\tilde{\tau}^n_i} (C'_s-C'_{\tilde{\tau}^n_{i-1}})dM_s\right)^2\right]\nonumber
\\
&=\sum_{i=1}^\infty\E\left[\left(\int_{\tilde{\tau}^n_{i-1}}^{\tilde{\tau}^n_i} (C'_s-C'_{\tilde{\tau}^n_{i-1}})dM_s\right)^2\right]\nonumber
\\
&=\sum_{i=1}^\infty \E\left[\left[\int_{\tilde{\tau}^n_{i-1}}^{.} (C'_s-C'_{\tilde{\tau}^n_{i-1}})dM_s\right]_{\tilde{\tau}^n_i}\right]\nonumber
\\
&=
\sum_{i=1}^\infty\E\left[\int_{\tilde{\tau}^n_{i-1}}^{\tilde{\tau}^n_i} (C'_s-C'_{\tilde{\tau}^n_{i-1}})^2d[M]_s\right]\nonumber
\\
&\le
\E\left[4\left(\left( C'-C'^n\right)^*_t\right)^2[M]_t\right]
\end{align}
As 
$$4\left(\left( C'-C'^n\right)^*_t\right)^2[M]_t
\le 4\left(2L_3\right)^2L_1=16L_3^2L_1, $$
we may apply the dominated convergence theorem to conclude that 
$$\lim_{n\to\infty}\E\left[\left(\sum_{\tau_i^n\le .}\left(\int_{\tau_{i-1}}^{\tau_i} (C'_s-C'_{\tau_{i-1}})dM_s\right)^2\right)^*_t\right]=0$$
and therefore we also have convergence in probability by the Markov inequality. This shows that $\int Z'_-dC'$ indeed has zero quadratic variation.
\\
By localization we may assume $\sup_{s\in[0,t]}|g\left(C_s\right)|\le L_4$, for some $L_4\in\R^+$. Let $G(x)=g(0)+\int_0^xg(u)du$, then $G(C)$ has zero quadratic variation in the strong sense. Indeed,
\begin{align*}
\left(\sum_{\tau^n_i\in P^n,\tau^n_i\le . }\left(G(C_{\tau_i})-G(C_{\tau_{i-1}})\right)^2\right)^*_t
&=
\left(\sum_{\tau^n_i\in P^n,\tau^n_i\le . }\left(\int_0^1 g(C_{\tau_{i-1}}+\theta(C_{\tau_{i}}-C_{\tau_{i-1}}))d\theta(C_{\tau_{i}}-C_{\tau_{i-1}})\right)^2\right)^*_t
\\
&\le
\left(\sum_{\tau^n_i\in P^n,\tau^n_i\le . }\left(\int_0^1 \left|g(C_{\tau_{i-1}}+\theta(C_{\tau_{i}}-C_{\tau_{i-1}}))\right|d\theta\right)^2\left(C_{\tau_{i}}-C_{\tau_{i-1}}\right)^2\right)^*_t
\\
&\le
L_4^2\sum_{\tau^n_i\in P^n}\left(C_{\tau_{i}}-C_{\tau_{i-1}}\right)^2
\end{align*}
which converges in ucp to $L_4^2[C]=0$. Since $G(C)$ has zero quadratic variation in the strong sense, the integral $\int_0^.Z_-dG(C)$ exists as the ucp limit of $\sum_{\tau^n_i\in P^n,\tau^n_i\le . }Z_{\tau_{i-1}}\left(G(C_{\tau_i})-G(C_{\tau_{i-1}})\right)$. Further manipulation of these Riemann-Stieltjes sums yields,
\begin{align}\label{ZphiC}
\sum_{\tau^n_i\in P^n,\tau^n_i\le . }Z_{\tau_{i-1}}\left(G(C_{\tau_i})-G(C_{\tau_{i-1}})\right)
&= 
\sum_{\tau^n_i\in P^n,\tau^n_i\le . }Z_{\tau_{i-1}}\int_0^1 g(C_{\tau_{i-1}}+\theta(C_{\tau_{i}}-C_{\tau_{i-1}}))d\theta(C_{\tau_{i}}-C_{\tau_{i-1}})\nonumber
\\
&=
\sum_{\tau^n_i\in P^n,\tau^n_i\le . }Z_{\tau_{i-1}} g(C_{\tau_{i-1}})(C_{\tau_{i}}-C_{\tau_{i-1}})\nonumber
\\
&+
\sum_{\tau^n_i\in P^n,\tau^n_i\le . }Z_{\tau_{i-1}}(C_{\tau_{i}}-C_{\tau_{i-1}})\left(\int_0^1 g(C_{\tau_{i-1}}+\theta(C_{\tau_{i}}-C_{\tau_{i-1}}))d\theta-g(C_{\tau_{i-1}})\right).
\end{align}
Let us first study the second term on the right-most side above. Given $\epsilon>0$ let $\delta_1,\delta_2>0$ be so small that $\left|g(u)-g(v)\right|<\epsilon$ for $u,v\in[-L_3,L_3]$ with $|u-v|<\delta_1$ and $\left|C_u-C_v\right|<\delta_1$ whenever $|u-v|<\delta_2$ for $u,v\in[0,t]$. Then if $\n P^n\n<\delta_2$ we get 
$$\left|\int_0^1 g(C_{\tau_{i-1}}+\theta(C_{\tau_{i}}-C_{\tau_{i-1}}))d\theta -g(C_{\tau_{i-1}})\right|
\le
\int_0^1 \left|g(C_{\tau_{i-1}}+\theta(C_{\tau_{i}}-C_{\tau_{i-1}}))-g(C_{\tau_{i-1}})\right|d\theta
\le
\epsilon.$$
Since the ucp limit of $\sum_{\tau^n_i\in P^n,\tau^n_i\le . }Z_{\tau_{i-1}}(C_{\tau_{i}}-C_{\tau_{i-1}})$ is $\int Z_-dC$, we may therefore conclude that the second term on the right-hand side of \eqref{ZphiC} converges to 0, while the right most expression converges in ucp to $\int_0^.Z_-dG(C)$. This implies that the ucp limit $\sum_{\tau^n_i\in P^n,\tau^n_i\le . }Z_{\tau_{i-1}} g(C_{\tau_{i-1}})(C_{\tau_{i}}-C_{\tau_{i-1}})$ has a well-defined limit, $\int_0^.Z_-g(C)dC$ and moreover that $\int_0^.Z_-g(C)dC=\int_0^.Z_-dG(C)$. Therefore $[\int_0^.Z_-g(C)dC]_t=[\int_0^.Z_-dG(C)]_t=0$, as was to be shown.
\end{proof}
Similarly to \cite{NonCont} we will make use of a variant of the $C^2-$ It\^o formula. We may not, however apply the one from \cite{Fol}, as we are working in ucp setting. This actually does not require much innovation as it turns out.
\begin{lemma}\label{Follm}
Suppose $X$ is a process in with quadratic variation in the strong sense and $f$ is a $C^2$ function then
\begin{align*}
f(X_t)=f(X_0)+\sum_{s\le t}\left(\Delta f(X_s)-\Delta X_sf'(X_{s_-}) \right)+\int_0^tf'(X_{s_-})dX_s+\frac 12\int_0^tf''(X_{s_-})d[X]^c_s,
\end{align*}
where $\int_0^tf'(X_{s_-})dX_s$ is a process with quadratic variation in the strong sense and the ucp limit of $\sum_{\tau_i\in P^n}f'(X_{\tau_{i-1}})(X_{\tau_{i}}-X_{\tau_{i-1}})$ where $P^n$ is any sequence of partitions of $[0,t]$ such that $\n P^n\n\to 0$.
\end{lemma}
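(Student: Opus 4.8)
The plan is to re-run the argument behind Föllmer's Lemma \ref{OGFollm}, but to track every limit in the ucp sense rather than pointwise, exploiting that strong quadratic variation (Definition \ref{partidef}) delivers $[X]^{P^n}\to[X]$ uniformly on $[0,t]$ along \emph{every} partition sequence. First I would localize so that $X^*_t\le R$; then $f,f',f''$ are bounded and uniformly continuous on $[-R,R]$, say $|f''|\le M$. Fix an arbitrary $P^n=\{\tau_i\}$ with $\n P^n\n\to 0$, write $\Delta_iX=X_{\tau_i\wedge s}-X_{\tau_{i-1}\wedge s}$, and telescope
$$f(X_s)-f(X_0)=\sum_{\tau_i\le s}\big(f(X_{\tau_i\wedge s})-f(X_{\tau_{i-1}\wedge s})\big).$$
Taylor's theorem in integral-remainder form splits the right-hand side as $J^n_s+\tfrac12 Q^n_s+\rho^n_s$, where $J^n_s=\sum f'(X_{\tau_{i-1}})\Delta_iX$, $Q^n_s=\sum f''(X_{\tau_{i-1}})(\Delta_iX)^2$, and $\rho^n_s=\sum(\Delta_iX)^2\int_0^1(1-\theta)\big(f''(X_{\tau_{i-1}}+\theta\Delta_iX)-f''(X_{\tau_{i-1}})\big)\rmd\theta$. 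It suffices to show $Q^n$ and $\rho^n$ converge ucp and to identify their limits; the statement for $J^n$ then follows by difference, and its limit is forced to agree with the Föllmer integral by consistency with Lemma \ref{OGFollm} along any fixed-time refining sequence.

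The crux is the second-order sum $Q^n$, which I would recast as a covariation sum. Setting $r'_i:=\big(f'(X_{\tau_i})-f'(X_{\tau_{i-1}})\big)-f''(X_{\tau_{i-1}})\Delta_iX$ and using $f'\in C^1$,
$$Q^n_s=\sum_{\tau_i\le s}\big(f'(X_{\tau_i\wedge s})-f'(X_{\tau_{i-1}\wedge s})\big)\Delta_iX-\sum_{\tau_i\le s}r'_i\,\Delta_iX=[f'(X),X]^{P^n}_s-\sum_{\tau_i\le s}r'_i\,\Delta_iX.$$
Since $f'$ is everywhere differentiable, Assumption \ref{aslow} holds automatically, and Lemma \ref{LowLemma} applied to $f'$ and to $x\mapsto x+f'(x)$, combined with polarization, shows $f'(X)$ admits a strong covariation with $X$ satisfying $[f'(X),X]^c=\int_0^\cdot f''(X_{u-})\rmd[X]^c_u$ and $\Delta[f'(X),X]_u=\Delta f'(X_u)\Delta X_u$; hence $[f'(X),X]^{P^n}\to[f'(X),X]$ ucp. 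The defect sum is handled by isolating the finitely many jumps with $|\Delta X|>\eta$ (on whose containing intervals $\Delta_iX\to\Delta X_v$ and $r'_i\to\Delta f'(X_v)-f''(X_{v-})\Delta X_v$) and using $\sum(\Delta_iX)^2\to[X]_t$ with uniform continuity of $f''$ off those jumps, giving $\sum r'_i\Delta_iX\to\sum_{u\le\cdot}\big(\Delta f'(X_u)-f''(X_{u-})\Delta X_u\big)\Delta X_u$ ucp. Subtracting, the jump pieces telescope and $Q^n\to\int_0^\cdot f''(X_{u-})\rmd[X]^c_u+\sum_{u\le\cdot}f''(X_{u-})(\Delta X_u)^2=\int_0^\cdot f''(X_{u-})\rmd[X]_u$ ucp.

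An analogous big-jump/uniform-continuity argument handles $\rho^n$: off the large jumps the inner integrand $\int_0^1(1-\theta)(f''(X_{\tau_{i-1}}+\theta\Delta_iX)-f''(X_{\tau_{i-1}}))\rmd\theta\to0$ uniformly while $\sum(\Delta_iX)^2$ stays bounded, and on each large-jump interval the bracket converges to its jump value; dominated convergence against the finite measure $\rmd[X]$ upgrades this to a uniform-in-$s$ statement, so $\rho^n\to\sum_{u\le\cdot}\big(\Delta f(X_u)-\Delta X_uf'(X_{u-})-\tfrac12f''(X_{u-})(\Delta X_u)^2\big)$ ucp. Adding the limits of $\tfrac12Q^n$ and $\rho^n$, the jump contributions assemble to $\sum_{u\le\cdot}(\Delta f(X_u)-\Delta X_uf'(X_{u-}))$ and the continuous contribution to $\tfrac12\int f''(X_{u-})\rmd[X]^c_u$; consequently $J^n$ converges ucp to $f(X)-f(X_0)-\tfrac12\int_0^\cdot f''(X_{u-})\rmd[X]^c_u-\sum_{u\le\cdot}(\Delta f(X_u)-\Delta X_uf'(X_{u-}))$, which I would \emph{define} to be $\int_0^\cdot f'(X_{u-})\rmd X_u$; this coincides with the pointwise Föllmer integral of Lemma \ref{OGFollm}.

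It remains to see that this integral has strong quadratic variation: one reads off its jumps $\Delta\big(\int f'(X_-)dX\big)_u=f'(X_{u-})\Delta X_u$ and continuous part, and checks that $\big[\int f'(X_-)dX\big]^{P^n}$ converges ucp, e.g. via Lemma \ref{LowLemma} applied to $f$ together with the finite-variation/zero-quadratic-variation structure of the remaining two terms (the latter being covered by Lemma \ref{chainrule}-type reasoning). The hard part will be the ucp control of the defect sum $\sum r'_i\Delta_iX$ (and of $\rho^n$) \emph{uniformly in $s$}: the naive triangle-inequality bound only yields a multiple of $\sum(\Delta_iX)^2$, which does not vanish, so genuine cancellation is essential. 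The covariation reinterpretation is precisely what supplies this cancellation, but it leans on $f'(X)$ possessing a strong covariation with $X$, i.e. on the \cite{Low}-type machinery behind Lemma \ref{LowLemma}; securing the weighted-sum convergence from strong quadratic variation alone, without invoking that structure, is the delicate point.
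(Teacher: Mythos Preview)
Your route is far more explicit than the paper's. The paper's proof is essentially two lines: it \emph{defines} $\int_0^\cdot f'(X_-)\,dX$ by rearranging the formula as $f(X)-f(X_0)-\tfrac12\int f''(X_-)\,d[X]^c-\sum(\Delta f(X)-f'(X_-)\Delta X)$, notes that every term on the right has strong quadratic variation (so the integral inherits it once the identity is in place), and then says the identity with the Riemann sums ``follows from the standard arguments in the proof of the It\^o formula'' carried out in ucp. In other words, the paper outsources precisely the Taylor/second-order computation you spell out; what you gain is an honest verification that the ucp limit of $J^n$ exists and equals the expression on the right.

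Your execution does, however, introduce a genuine gap. To handle $Q^n$ you recast it as $[f'(X),X]^{P^n}$ and invoke Lemma \ref{LowLemma} (and again for the strong quadratic variation of the integral at the end). Lemma \ref{LowLemma}, and the \cite{Low} machinery behind it, require $X$ to be a \emph{Dirichlet} process; the present lemma only assumes strong quadratic variation, so that appeal is illegitimate at this level of generality. You flag the worry yourself, but the concern is misplaced: the weighted sum $\sum f''(X_{\tau_{i-1}})(\Delta_iX)^2\to\int f''(X_-)\,d[X]$ ucp follows from strong quadratic variation alone, with no Dirichlet structure and no covariation detour. One approximates the c\`agl\`ad process $f''(X_-)$ uniformly by a simple predictable step process $\phi=\sum_k f''(X_{\sigma_k})1_{[\sigma_k,\sigma_{k+1})}$ built from the finitely many stopping times $\sigma_k$ furnished by the c\`adl\`ag oscillation decomposition of $X$; for the step process the weighted sum collapses to a finite linear combination of $[X]^{P^n}_{\sigma_{k+1}}-[X]^{P^n}_{\sigma_k}$, which converges by strong QV, and the approximation error is controlled by $\|f''(X_-)-\phi\|_\infty\cdot[X]^{P^n}_t$. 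This is the ``standard argument'' the paper defers to, and once it is in hand the strong quadratic variation of $\int f'(X_-)\,dX$ drops out by difference exactly as the paper says, without any recourse to Lemma \ref{LowLemma}.
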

\begin{proof}
We know that since $f\in C^2$, $f(X)$ has a quadratic variation in the strong sense. Re-arranging, we have that the formula stated in the theorem is equivalent to
$$\int_0^t f'(X_-)dX=f(X_t)-f(X_0)-\frac 12\int_0^tf''(X_{s-})d[X]^c_s-\sum_{s\le t}\left(\Delta f(X_s)-\Delta X_sf'(X_{s_-}) \right). $$
All terms on the right-hand side have quadratic variation in the strong sense (the second one is zero) and therefore so will the left-hand side if we can establish this identity. This follows from the standard arguments in the proof of the It\^o-formula since we do not establish any semimartingale property and we work in the ucp setting.
\end{proof}
\begin{lemma}\label{equiv}
Let $a\in \R^+$, if $\sum_{s\le t} \int_{|x|\le a}|x|\nu(\{s\},dx)<\infty\hspace{2mm} a.s.$ then
$\sum_{s\le t,s\in\mathcal{A}} \int_{|x|\le a}|x|\mu(\{s\},dx)<\infty\hspace{2mm} a.s.$ (where $\mathcal{A}$ is defined in Definition \ref{calA})
and in particular,
$$\left|\sum_{ s\in\mathcal{A}}\int_{|x|\le a}\left(f(X_{s-}+x)-f(X_{s-})-xf'(X_{s-})\right)\mu(\{s\},dx)\right|<\infty \hspace{1mm} a.s. $$
\end{lemma}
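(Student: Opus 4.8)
The plan is to identify the two sums in the statement as the values at time $t$ of, respectively, a raw increasing process and its predictable compensator, and then to transfer almost-sure finiteness from the compensator to the raw process; the second assertion will then follow pathwise from a Lipschitz estimate.

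First I would set up the reformulation. Put
$$U_s:=\sum_{r\le s,\,r\in\mathcal{A}}\int_{|x|\le a}|x|\,\mu(\{r\},dx)=\sum_{r\le s,\,r\in\mathcal{A}}|\Delta X_r|\,1_{\{|\Delta X_r|\le a\}},$$
a nonnegative adapted increasing process, a priori $[0,\infty]$-valued. Since $\nu$ is the predictable compensator of $\mu$ and $1_{\mathcal{A}}$ is predictable ($\mathcal{A}$ being a thin predictable set by Proposition \ref{version}), the compensator of $U$ is
$$U^p_s=\int_0^s\int_{|x|\le a}|x|\,1_{\mathcal{A}}(r)\,\nu(dr,dx)=\sum_{r\le s}\int_{|x|\le a}|x|\,\nu(\{r\},dx),$$
the last equality holding because $1_{\mathcal{A}}$ removes exactly the part of $\nu$ carrying no time-atoms and $\nu(\{r\},\R)=0$ off $\mathcal{A}$. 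Thus the hypothesis is precisely $U^p_t<\infty$ a.s., the first assertion is $U_t<\infty$ a.s., and, crucially, by Proposition \ref{version} both processes have jumps bounded by $a$, since $\Delta U^p_s=\int_{|x|\le a}|x|\,\nu(\{s\},dx)\le a\,\nu(\{s\},\R)\le a$.

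Next I would prove $\{U^p_t<\infty\}\subseteq\{U_t<\infty\}$ a.s. by a Lenglart-type stopping argument. For $\lambda>0$ set $S_\lambda=\inf\{s:U^p_s\ge\lambda\}\wedge t$; the jump bound gives $U^p_{S_\lambda}\le\lambda+a$. Because $U$ has jumps $\le a$ it is locally integrable, so $U-U^p$ is a local martingale; localising with $R_n=\inf\{s:U_s\ge n\}$ (for which $U_{R_n}\le n+a$) and applying optional stopping yields $\E[U_{S_\lambda\wedge R_n}]=\E[U^p_{S_\lambda\wedge R_n}]\le\lambda+a$. Letting $n\to\infty$ and using monotone convergence gives $\E[U_{S_\lambda}]\le\lambda+a<\infty$, hence $U_{S_\lambda}<\infty$ a.s. On $\{U^p_t<\lambda\}$ one has $S_\lambda=t$, so $U_t<\infty$ there; taking the union over $\lambda\uparrow\infty$ and invoking the hypothesis yields $U_t<\infty$ a.s. I expect the \emph{main obstacle} to be precisely the justification of the optional-stopping identity while $U$ is still only $[0,\infty]$-valued: one must run it through the localising sequence $R_n$, and the uniform bound $\E[U_{S_\lambda\wedge R_n}]\le\lambda+a$ is exactly what rules out an explosion of $U$ before $S_\lambda$ (in the alternative, $S_\lambda\wedge R_n=R_n\uparrow$ a limit $\le S_\lambda$ with $U_{R_n}\to\infty$, an event the expectation bound forces to be null).

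Finally, the second assertion follows pathwise from the first. Since $X$ is cadlag its path is bounded on $[0,t]$, say by $R=R(\omega)$, and $f$, being locally Lipschitz (the primitive of a cadlag, hence locally bounded, function), is Lipschitz on $[-(R+a),R+a]$ with some constant $L=L(\omega)$ and satisfies $|f'|\le L$ there. For each $s$ the measure $\mu(\{s\},\cdot)$ is either zero or the unit mass at $\Delta X_s$, so whenever $|\Delta X_s|\le a$,
$$\left|\int_{|x|\le a}\bigl(f(X_{s-}+x)-f(X_{s-})-xf'(X_{s-})\bigr)\mu(\{s\},dx)\right|\le 2L\,|\Delta X_s|\,1_{\{|\Delta X_s|\le a\}}.$$
Summing over $s\in\mathcal{A}$ bounds the left-hand side of the second assertion by $2L\,U_t$, which is finite a.s. by the first part, completing the argument.
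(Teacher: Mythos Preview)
Your proof is correct and follows essentially the same route as the paper: localize via the stopping time where the predictable sum $\sum_{r\le\cdot}\int_{|x|\le a}|x|\,\nu(\{r\},dx)$ first exceeds a given level (your $S_\lambda$ is the paper's $U_m$), use the bounded-jump property to control the overshoot, obtain an $L^1$ bound on the raw sum at that time, and let the level tend to infinity. The only cosmetic difference is that the paper exhausts $\mathcal{A}$ by predictable times $T_k$ and invokes the conditional-expectation characterisation of $\nu$ at each $T_k$ separately before summing, whereas you package the same computation as a single compensator identity $U^p=(|x|1_{|x|\le a}1_{\mathcal{A}})*\nu$ and add the second localisation $R_n$ to handle the a priori $[0,\infty]$-valuedness of $U$; the Lipschitz estimate for the second assertion is identical in both arguments.
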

\begin{proof}
Let $T_0=0$ and for $k\ge 1$
$$T_k=\inf\{s>T_{k-1}:\nu\left(\{s\},\R\right)>0\}.$$
Then trivially $\{T_k\}_k$ are  (purely) predictable times exhausting $\mathcal{A}$. Similarly we can consider the localizing sequence $U_0=0$, 
$$U_m=\inf\{r>U_{m-1}: \sum_{s\le r} \int_{|x|\le a}|x|\nu(\{s\},dx)\geq m\},$$ 
for $m\ge 1$ which are all predictable times. Since the function $|x|1_{s< U_m}1_{|x|\le a}$ is non-negative and predictable and $T_k$ is a predictable time, for we have 
$$\int_{|x|\le a}|x|1_{T_k< U_m}\nu(\{T_k\},dx)=\E\left[\int_{|x|\le a}|x|1_{T_k< U_m}\mu(\{ T_k\},dx)\ \|\F_{T_k-}\right],$$ 
for any $m,k\ge 0$, due to Property 1.11 of Chapter II in \cite{JACOD}. Taking expectations of both sides above yields
$$\E\left[\int_{|x|\le a}|x|1_{T_k< U_m}\mu(\{T_k\},dx)\right]=\E\left[\int_{|x|\le a}|x|1_{T_k< U_m}\nu(\{ T_k\},dx)\ \right].$$ 
It then follows from monotone convergence that
\begin{align*}
\E\left[\left(\sum_{ s\in\mathcal{A},s\le .}\int_{|x|\le a}|x|\mu(\{s\},dx)\right)_{U_m-}\right]
&=
\E\left[\sum_{ s\in\mathcal{A}}\int_{|x|\le a}|x|1_{s< U_m}\mu(\{s\},dx)\right]
\\
&=
\E\left[\sum_{k\ge 1}\int_{|x|\le a}|x|1_{T_k< U_m}\mu(\{T_k\},dx)\right]
\\
&=
\sum_{k\ge 1}\E\left[\int_{|x|\le a}|x|1_{T_k< U_m}\mu(\{T_k\},dx)\right]
\\
&=
\sum_{k\ge 1}\E\left[\int_{|x|\le a}|x|1_{T_k< U_m}\nu(\{T_k\},dx)\right]
\\
&=
\E\left[\sum_{k\ge 1}\int_{|x|\le a}|x|1_{T_k< U_m}\nu(\{T_k\},dx)\right]
\\
&=\E\left[\left(\sum_{s\le .}\int_{|x|\le a}|x|\nu(\{s\},dx)\right)_{(t\wedge U_m)-}\right]
\le 
m+a,
\end{align*}
which implies $\E\left[\left(\sum_{ s\in\mathcal{A},s\le .}\int_{|x|\le a}|x|\mu(\{s\},dx)\right)_{U_m}\right]\le m+2a$. Therefore, $\sum_{s\in\mathcal{A}}\int_{|x|\le a}|x|\mu(\{s\},dx)<\infty$ on $\{t\le U_m\}$ and since $\P\left(\bigcup_{m\ge 1} \{U_m\ge t\} \right)=1$ we conclude that $\sum_{s\le t}\int_{|x|\le a}|x|\mu(\{s\},dx)<\infty$ a.s.. Letting $S_m=\sup_{x\in[-m,m]}|f'(x)|$ we may also deduce that
\begin{align*}
&\left|\sum_{s\in \mathcal{A}}\int_{|x|\le a}\left(f(X_{s-}+x)-f(X_{s-})-xf'(X_{s-})\right)\mu(\{s\},dx)\right|
\\
\le &\sum_{s\in \mathcal{A}}\int_{|x|\le a}|x|\int_0^1\left|f'(X_{s-}+\theta x)-f'(X_{s-})\right|d\theta\mu(\{s\},dx) \le 2S_m\sum_{s\in \mathcal{A}}\int_{|x|\le a}|x|\mu(\{s\},dx)<\infty,
\end{align*}
a.s..
\end{proof}
Past this point all processes we consider in this manuscript will fulfil $\int|x|d\nu<\infty$ a.s. (i.e. the purely predictable jumps of $X$ have finite variation a.s.).
\begin{thm}\label{absolut}
  Suppose $X$ is a Dirichlet process, that $f$ (and $X$) fulfils the assumption of Lemma \ref{LowLemma} and that $a>0$. Then $f(X)$ is a Dirichlet process (with quadratic variation along the same refining sequence as $X$) that admits to the following decomposition,
$f(X)=Y^a+\Gamma^a$ with
\begin{align}\label{Eq:Ya}
Y^{a}_t&=f(X_0)+\sum_{s\le t}\left(f(X_s)-f(X_{s-})-\Delta X_sf'(X_{s-}) \right)1_{|\Delta X_s|>a}+\int_0^tf'(X_{s-})dZ_s\nonumber
\\
&+\int_0^t\int_{|x|\le a}\left(f(X_{s-}+x)-f(X_{s-})-xf'(X_{s-})\right)(\mu-\nu)(ds,dx)\nonumber
\\
&+\sum_{s\le t} \int_{|x|\le a}\left(f(X_{s-}+x)-f(X_{s-})-xf'(X_{s-})\right)\nu(\{s\},dx), 
\end{align}
where $\mu$ is the jump measure of $X$ with $\nu$ as its compensator, $f'$ is as in Lemma \ref{LowLemma} and $\Gamma^a$ is an adapted continuous process with zero quadratic variation. 
\end{thm}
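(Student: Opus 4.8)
The plan is to build directly on the decomposition already extracted inside the proof of Lemma \ref{LowLemma}. By that lemma (via Theorem 2.1 of \cite{Low}) we may write $f(X) = \int_0^\cdot f'(X_{u-})\,dZ_u + V$, where $Z$ is the semimartingale part in the decomposition $X = Z + C$ and $V$ is a cadlag process that possesses a quadratic variation with $[V]^c = 0$. The stochastic integral is already the third summand of $Y^a$, so the whole problem reduces to decomposing $V = f(X_0) + (\text{jump terms}) + \Gamma^a$, where the jump terms are the remaining four summands of $Y^a$ and $\Gamma^a$ is to be shown continuous with zero quadratic variation.

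The core of the argument is a jump-by-jump analysis. Since $C$ is continuous we have $\Delta X_s = \Delta Z_s$, so the jumps of $V$ are $\Delta V_s = W(s,\Delta X_s)$ with $W(s,x) := f(X_{s-}+x) - f(X_{s-}) - x f'(X_{s-})$. I would then introduce three candidate processes: the large-jump sum $L^a_t := \sum_{s\le t} W(s,\Delta X_s)\,1_{|\Delta X_s|>a}$, which is a finite sum because a cadlag path has finitely many jumps exceeding $a$ on $[0,t]$; the compensated small-jump integral $M^a_t := \int_0^t\int_{|x|\le a} W(s,x)\,(\mu-\nu)(ds,dx)$; and the purely predictable atom sum $P^a_t := \sum_{s\le t}\int_{|x|\le a} W(s,x)\,\nu(\{s\},dx)$. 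To make $M^a$ a well-defined purely discontinuous local martingale I would localize using local Lipschitz continuity of $f$, giving $|W(s,x)|\le 2L|x|$, so that $\int_0^t\int_{|x|\le a} W(s,x)^2\,\nu \le 4L^2\int_0^t\int_{|x|\le a}x^2\,\nu < \infty$ a.s. by the semimartingale property of $Z$; finiteness of $P^a$ is precisely the content of Lemma \ref{equiv} combined with the standing assumption $\int|x|\,d\nu<\infty$.

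Writing $\tilde V^a := L^a + M^a + P^a$, the decisive computation is that $\tilde V^a$ has exactly the jumps of $V$. At a time $s$ the jump of $M^a$ equals $\int_{|x|\le a} W(s,x)\,\mu(\{s\},dx) - \int_{|x|\le a} W(s,x)\,\nu(\{s\},dx) = W(s,\Delta X_s)1_{|\Delta X_s|\le a} - \int_{|x|\le a} W(s,x)\,\nu(\{s\},dx)$, and the compensator part is cancelled exactly by $\Delta P^a_s = \int_{|x|\le a} W(s,x)\,\nu(\{s\},dx)$, while $\Delta L^a_s = W(s,\Delta X_s)1_{|\Delta X_s|>a}$; summing gives $\Delta\tilde V^a_s = W(s,\Delta X_s) = \Delta V_s$. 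Hence $\Gamma^a := V - f(X_0) - \tilde V^a$ has no jumps, i.e.\ it is continuous. To see $[\Gamma^a]=0$, note each constituent has zero continuous quadratic variation: $[V]^c=0$ by Lemma \ref{LowLemma}, the processes $L^a$ and $P^a$ are finite-variation pure-jump, and $M^a$ is purely discontinuous so $[M^a]^c=0$. Since $\Gamma^a$ is continuous, $[\Gamma^a]=[\Gamma^a]^c=[V-\tilde V^a]^c$, and expanding this continuous covariation bilinearly, every diagonal term vanishes while every cross term is dominated, by the Kunita--Watanabe inequality, by the geometric mean of two vanishing diagonal terms; thus $[\Gamma^a]^c=0$.

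Assembling the pieces yields $f(X) = \int_0^\cdot f'(X_{u-})\,dZ_u + f(X_0) + L^a + M^a + P^a + \Gamma^a = Y^a + \Gamma^a$, with $Y^a$ a semimartingale (a constant plus a finite-variation process plus a stochastic integral against $Z$ plus a local martingale) and $\Gamma^a$ a continuous process of zero quadratic variation, so $f(X)$ is a Dirichlet process with quadratic variation along the same refining sequence as $X$. I expect the main obstacle to be the careful treatment of $M^a$: verifying that $W$ lies in the appropriate class for $W*(\mu-\nu)$ to be a genuine purely discontinuous local martingale even though $f$ is only locally Lipschitz (so $W$ is merely $O(|x|)$, not $o(|x|)$), and isolating the purely predictable atoms of $\nu$ through $P^a$ so that the jump identity $\Delta\tilde V^a=\Delta V$ holds exactly. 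A secondary technical point is confirming that all quadratic variations and covariations invoked in the final step exist along the common refining sequence, which is what legitimizes the Kunita--Watanabe cancellation.
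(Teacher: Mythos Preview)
Your approach is genuinely different from the paper's and conceptually cleaner, but the step you flag as a ``secondary technical point'' is in fact the heart of the matter and is not resolved.

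The paper does not argue via Kunita--Watanabe on continuous parts. Instead it approximates $f'$ pointwise by polynomials $p_n$ (using that right derivatives are Baire class~1, then Weierstrass), sets $f_n(x)=f(-m)+\int_{-m}^x p_n$, and applies the $C^2$ It\^o formula (Lemma~\ref{Follm}) to $f_n(X)$. This yields $f_n(X)=Y^{n,a}+\Gamma^n$ with $\Gamma^n=\int p_n(X_-)\,dC+\tfrac12\int p_n'(X_-)\,d[X]^c$, and $[\Gamma^n]=0$ is proved \emph{directly} from Lemma~\ref{chainrule} (this is where the polynomial structure is essential). The key inequality is then, for any partition sequence $P^k$,
\[
\limsup_{k}\P\Bigl(\sum_{\tau_i\in P^k}(\Gamma_{\tau_i}-\Gamma_{\tau_{i-1}})^2\ge\epsilon^2\Bigr)\le \P\bigl([f(X)-f_n(X)]_t\ge\epsilon^2/3\bigr)+\P\bigl([Y^a-Y^{n,a}]_t\ge\epsilon^2/3\bigr),
\]
and both right-hand terms are shown to vanish as $n\to\infty$ (the first via Lemma~\ref{LowLemma}, the second by dominated convergence term by term). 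Crucially, this establishes that the discrete quadratic sums of $\Gamma$ tend to zero without ever assuming a priori that $[\Gamma]$, or any covariation such as $[V,\tilde V^a]$, exists.

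Your argument, by contrast, \emph{requires} those covariations to exist: you write $[\Gamma^a]^c=[V]^c-2[V,\tilde V^a]^c+[\tilde V^a]^c$ and then invoke $|[V,\tilde V^a]^c|\le([V]^c)^{1/2}([\tilde V^a]^c)^{1/2}$. Neither step is free. For the bilinear expansion you need $[V,\tilde V^a]$ to exist in the strong sense; since $V$ is not a semimartingale (only a process with $[V]^c=0$) and $\tilde V^a$ contains the purely discontinuous local martingale $M^a$, convergence of $[V,M^a]^{P}$ is not automatic---via integration by parts it is equivalent to existence of the F\"ollmer-type integral $\int (M^a)_-\,dV$, which is precisely the kind of object the paper is working to construct. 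For the KW step on continuous parts, the standard Kunita--Watanabe inequality bounds $[V,\tilde V^a]$, not $[V,\tilde V^a]^c$; a continuous-part version can be obtained from nonnegativity of $[\lambda V+\tilde V^a]^c$ for all real $\lambda$, but that again presupposes existence of $[\lambda V+\tilde V^a]$. So the ``secondary'' point is circular as it stands.

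What your approach would buy, if the existence issue were settled (perhaps by drawing more heavily on Lowther's framework for the class of processes ``semimartingale plus process with zero continuous quadratic variation''), is a proof that bypasses the polynomial approximation and the lengthy term-by-term limit analysis of $[Y^a-Y^{n,a}]$. What the paper's approach buys is that it never needs any covariation involving the non-semimartingale $V$ to exist: it bounds the discrete quadratic sums of $\Gamma$ from above by quantities whose strong quadratic variations are already known.
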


This theorem is an analogue of Theorem 2.1 in \cite{NonCont}, in the setting of quadratic variations in the strong sense, but with more relaxed assumptions on the transform (\cite{NonCont} considers $C^1$ transforms), the decomposition is exactly of the same type. This type of decomposition will be very useful to us as it gives us a very detailed description of the jumps.
\begin{rema} 
As was shown in \cite{Low}, Assumption \ref{aslow} is always fulfilled by semimartingales. Theorem 1.2. in \cite{Low}, in the time homogeneous case tells us that a semimartingale transformed by a locally Lipschitz function is a sum of a semimartingale and process with zero \textit{continuous} quadratic variation. With the additional assumptions if right-hand derivatives for $f$, we can in this case show that $f(X)$ is the sum of a semimartingale and a continuous process with zero quadratic variation.
\end{rema}
\begin{proof}
Define $\Gamma=f(X)-Y^a$, where the process $\Gamma$ does depend on $a$ but for notational convenience we suppress this in the proof. Note that the jumps of $f(X)$ and $Y^a$ coincide. Indeed, the $\Delta X_sf'(X_{s-})$ jumps in the first sum are cancelled by the jumps exceeding $a$ in the $dZ$ integral, the smaller jumps in the $dZ$ integral are cancelled from the $(\mu-\nu)$ and $\nu$ integrals. Therefore $\Gamma$ is continuous and adapted so the result is true if we can show that $[\Gamma]_t=0$ and that $Y^a$ is a semimartingale. 
\\
\\
If we let $T_m:=\inf\{s:(\Delta X)^*_s\vee X^*_s\vee Z^*_s\ge m\}$ then $\lim_{m\to\infty}\P(T_m> t)=1$. We will also denote $S_m=\sup_{x\in[-m,m]}|f'(x)|$. For any given $\epsilon>0$ we may chose $m$ such that $\P(T_m> t)>1-\epsilon$ we can therefore restrict our attention to $X^{T_m-}$ and for our purpose we can without loss of generality assume that $X=X^{T_m-}$ for some large $m$. Since $f$ has right-hand derivatives everywhere, this implies that $f'$ is Baire class 1 (see for instance Theorem 2 of \cite{Sar}) and we may approximate $f'$ point-wise by a sequence of continuous functions, $\{h_n\}_n$. By considering $h_n\wedge S_m$ (which is still continuous) we may assume that $h_n\le S_m$ on $[-m,m]$. Due to the Weierstrass Theorem we can take a polynomial $p_n$ such that $|p_n(x)-h_n(x)|<1/n$ for every $x\in[-m,m]$. It follows that $p_n\to f'$ point-wise on $[-m,m]$ and that $\sup_{x\in[-m,m]}|p_n(x)|\le S_m+1<\infty$, for all $n$. Moreover note that since $X_-\in\mathbb{L}$ (the space of left-continuous processes with limits from the right) then $p_n(X_-)\in\mathbb{L}$ and so $f'(X_-)=\limn p_n(X_-)\in\mathcal{P}$ ($f'(X_-)$ is predictable). 
\\
\\
We now verify that $Y^a$ is a semimartingale for each $a>0$.  
By Lemma \ref{equiv}, we may expand expand
\begin{align}\label{rewr}
&\int_0^t\int_{|x|\le a}\left(f(X_{s-}+x)-f(X_{s-})-xf'(X_{s-})\right)(\mu-\nu)(ds,dx)=\nonumber
\\
&\int_0^t\int_{|x|\le a}\left(f(X_{s-}+x)-f(X_{s-})-xf'(X_{s-})\right)(\tilde{\mu}-\nu_c)(ds,dx)+\nonumber
\\
&\sum_{s\le t, s\in \mathcal{A}} \int_{|x|\le a}\left(f(X_{s-}+x)-f(X_{s-})-xf'(X_{s-})\right)(\mu-\nu)(\{s\},dx),
\end{align}
where $\tilde{\mu}$ and $\nu_c$ denotes the measures $\mu$ and $\nu$ with all purely predictable jumps (all jumps in $\mathcal{A}$) removed respectively.
Let us now consider Equation \eqref{Eq:Ya} and re-write it using \eqref{rewr} 
\begin{align}\label{newEq}
Y^{a}_t&=f(X_0)+\sum_{s\le t}\left(f(X_s)-f(X_{s-})-\Delta X_sf'(X_{s-}) \right)1_{|\Delta X_s|>a}+\int_0^tf'(X_{s-})dZ_s\nonumber
\\
&+\int_0^t\int_{|x|\le a}\left(f(X_{s-}+x)-f(X_{s-})-xf'(X_{s-})\right)(\tilde{\mu}-\nu_c)(ds,dx)\nonumber
\\
&+\sum_{s\le t, s\in \mathcal{A}} \int_{|x|\le a}\left(f(X_{s-}+x)-f(X_{s-})-xf'(X_{s-})\right)\mu(\{s\},dx).
\end{align}
Our task is now to verify that the right-hand side of \eqref{newEq} is a semimartingale:  
\begin{itemize}
\item
The jump sum $\sum_{s\le t}\left(f(X_s)-f(X_{s-})-\Delta X_sf'(X_{s-}) \right)1_{|\Delta X_s|>a}$ only contains a finite number of jumps for each path and is trivially of finite variation
\item
For the term $\int_0^.f'(X_{s-})dZ_s$, it suffices to show that $(\int_0^.f'(X_{s-})dZ_s)^{T_m}$ is a semimartingale for each $m$ (so that it is a total semimartingale). Note that $f'(X_{s-})$ is predictable and bounded, so clearly the integral is a semimartingale.
\item
The integrand in the $(\tilde{\mu}-\nu_c)$- integral may be re-written as 
$$W(s,x)=\left(f(X_{s-}+x)-f(X_{s-})-xf'(X_{s-})\right)1_{|x|\le a}=x\int_0^1\left(f'(X_{s-}+\theta x)-f'(X_{s-})\right)d\theta1_{|x|\le a}.$$
Using the notation from \cite{JACOD}, 1.24 chapter 2, page 72, we let
$$\widehat{W}_t=\int_{\R}W(t,x)\nu_c(\{t\}\times dx)1_{\int_{\R}|W(t,x)|\nu_c(\{t\}\times dx)<\infty} $$
then it follows that $\widehat{W}_t=a_c(t)=0$ (where $a_c(t)=\nu_c(\{t\}\times \R)$) since $\nu_c$ has no purely predictable jumps by construction. Therefore, if we now also use the notation from page 73 chapter 2 in \cite{JACOD} and denote
$$C(W)_t=\int_{\R}\int_0^t\left(W(s,x)-\widehat{W}_s\right)^2\nu_c(ds,dx)+\sum_{s\le t}(1-a_c(s))\widehat{W}_s^2 $$
then
\begin{align*}
C(W)^{T_m}_t
&=\left(\int_0^t\int_{|x|\le a}(W(s,x)-\widehat{W}_s)^2\nu_c(ds,dx)\right)^{T_m}+0
\\
&=\left(\int_0^t\int_{|x|\le a}W(s,x)^2\nu_c(ds,dx)\right)^{T_m}=\left(\int_0^t\int_{|x|\le a}x^2\left(\int_0^1\left(f'(X_{s-}+\theta x)-f'(X_{s-})\right)d\theta\right)^2\right)^{T_m}
\\
&\le \int_0^{t\wedge T_m}\int_{|x|\le a}x^24S_m^2\nu(ds,dx),
\end{align*}
which is a process with jumps smaller than $4S_m^2a^2$, so after further localization we see that this is locally integrable. Utilizing the fact that $\nu_c$ is the compensator to $\tilde{\mu}$, this implies that (by Theorem 1.33 a) of Chapter II in \cite{JACOD}) \\$\int_0^t\int_{|x|\le a}\left(f(X_{s-}+x)-f(X_{s-})-xf'(X_{s-})\right)(\tilde{\mu}-\nu_c)(ds,dx)$ is a well-defined local martingale.
\item
For the final term we have the following estimate, similarly to Lemma \ref{equiv}
\begin{align*}
&\sum_{s\le t} \int_{|x|\le a}\left|f(X_{s-}+x)-f(X_{s-})-xf'(X_{s-})\right|\nu(\{s\},dx)\le \sum_{s\le t} \int_{|x|\le a}|x|2S_m\nu(\{s\},dx),
\end{align*}
which shows that the series is summable, so it is a pure jump semimartingale.
\end{itemize}

Define $f_n(x)=f(-m)+\int_{-m}^x p_n(u)du$ for $x\in [-m,m]$ so that both $f_n(x)\to f(x)$ and $p_n(x)\to f'(x)$ on $[-m,m]$. By Lemma \ref{Follm} it follows that
$$f_n(X_t)=f_n(X_0)+\sum_{s\le t}\left(\Delta f_n(X_s)-\Delta X_sp_n(X_{s_-}) \right)+\int_0^tp_n(X_{s_-})dX_s+\frac 12\int_0^tp_n'(X_{s_-})d[X,X]^c_s.$$
We now let
\begin{align}\label{fw}
Y^{n,a}_t&=f_n(X_0)+\sum_{s\le t}\left(f_n(X_s)-f_n(X_{s-})-\Delta X_sp_n(X_{s-}) \right)1_{|\Delta X_s|>a}+\int_0^tp_n(X_{s-})dZ_s\nonumber
\\
&+\int_0^t\int_{|x|\le a}\left(f_n(X_{s-}+x)-f_n(X_{s-})-xp_n(X_{s-})\right)(\tilde{\mu}-\nu_c)(ds,dx)\nonumber
\\
&+\sum_{s\le t, s\in \mathcal{A}} \int_{|x|\le a}\left(f_n(X_{s-}+x)-f_n(X_{s-})-xp_n(X_{s-})\right)\mu(\{s\},dx). 
\end{align}
Since each term on the right-hand side of \eqref{newEq} is a semimartingale, so is the sum.
\\
Define $\Gamma^n_t=\int_0^tp_n(X_{s_-})dC_s+\int_0^tp_n'(X_{s_-})d[X,X]^c_s$ ($\Gamma^n_t$ does in fact depend on $a$ but we suppress this dependence for notational convenience since $a$ is fixed at this point), so that $f_n(X)=Y^{n,a}+\Gamma^n$. Clearly $\int_0^tp_n'(X_{s_-})d[X,X]^c_s$ has zero quadratic variation (in the strong sense). As for $\int_0^tp_n(X_{s_-})dC_s$, by Lemma \ref{triangle} it follows that it is sufficient to show that $\int_0^tX_{s_-}^kdC_s=\int_0^t(Z_{s_-}+C_s)^kdC_s$ has zero quadratic variation for any non-negative integer $k$. Furthermore this is equivalent to showing that $\int_0^tZ_{s_-}^{l_1}C_s^{l_2}dC_s$ has zero quadratic variation for any non-negative integers $l_1,l_2$. This follows directly from Lemma \ref{chainrule} by setting $Z'=Z^{l_1}$ and $g(x)=x^{l_2}$. Since $\Gamma^n$ is the sum of two processes with zero quadratic variation, $\Gamma^n$ must also have zero quadratic variation.
We will now show that $[\Gamma]_t=0$. A-priori we do not even know that $[\Gamma]_t$ exists but it will follow from our argument that it both exists and is zero. Let $\{P^k\}_n$ be any sequence of partitions of $[0,t]$ such that $\n P^k\n\to 0$ as $k\to\infty$. We make the following estimate for a fixed $n$,
\\
\\
\begin{align*}
\sqrt{\sum_{\tau_i\in P^k}\left(\Gamma_{\tau_i}-\Gamma_{\tau_{i-1}} \right)^2}&\le\sqrt{\sum_{\tau_i\in P^k}\left((\Gamma_{\tau_i}-\Gamma^n_{\tau_i})-(\Gamma_{\tau_{i-1}}-\Gamma^n_{\tau_{i-1}}) \right)^2}+ \sqrt{\sum_{\tau_i\in P^k}\left(\Gamma^n_{\tau_i}-\Gamma^n_{\tau_{i-1}} \right)^2}
\\
&\le\sqrt{\sum_{\tau_i\in P^k}\left((f(X)_{\tau_i}-f_n(X)_{\tau_i})-(f(X)_{\tau_{i-1}}-f_n(X)_{\tau_{i-1}}) \right)^2} 
\\
&+ \sqrt{\sum_{\tau_i\in P^k}\left((Y^a_{\tau_i}-Y^{n,a}_{\tau_i})-(Y^a_{\tau_{i-1}}-Y^{n,a}_{\tau_{i-1}}) \right)^2} + \sqrt{\sum_{\tau_i\in P^k}\left(\Gamma^n_{\tau_i}-\Gamma^n_{\tau_{i-1}} \right)^2}
\end{align*}
and this shows that
\begin{align}\label{ineqGamma}
\limsup_{k\to\infty}\P\left( \sqrt{\sum_{\tau_i\in P^k}\left(\Gamma_{\tau_i}-\Gamma_{\tau_{i-1}} \right)^2}\ge \epsilon \right)
&\le
\limsup_{k\to\infty}\P\left( \sum_{\tau_i\in P^k}\left((f(X)_{\tau_i}-f_n(X)_{\tau_i})-(f(X)_{\tau_{i-1}}-f_n(X)_{\tau_{i-1}}) \right)^2\ge \frac{\epsilon^2}{3} \right)\nonumber
\\
&+ \limsup_{k\to\infty}\P\left(\sum_{\tau_i\in P^k}\left((Y^a_{\tau_i}-Y^{n,a}_{\tau_i})-(Y^a_{\tau_{i-1}}-Y^{n,a}_{\tau_{i-1}}) \right)^2\ge \frac{\epsilon^2}{3} \right)  
\nonumber
\\
&+ \limsup_{k\to\infty}\P\left(\sum_{\tau_i\in P^k}\left(\Gamma^n_{\tau_i}-\Gamma^n_{\tau_{i-1}} \right)^2\ge \frac{\epsilon^2}{3} \right)\nonumber
\\
&=\P\left(\left[ f(X)-f_n(X) \right]_t \ge \frac{\epsilon^2}{3}\right)+\P\left( \left[ Y^a-Y^{n,a} \right]_t \ge \frac{\epsilon^2}{3}\right)+\P\left( \left[ \Gamma^n \right]_t \ge \frac{\epsilon^2}{3}\right)\nonumber
\\
&=\P\left(\left[ f(X)-f_n(X) \right]_t \ge \frac{\epsilon^2}{3}\right)+\P\left( \left[ Y^a-Y^{n,a} \right]_t \ge \frac{\epsilon^2}{3}\right),
\end{align}
where we used the fact that the map $f-f_n$ fulfils the hypothesis (since $f_n$ is a polynomial) of Lemma \ref{LowLemma}, so it has a quadratic variation, $Y^a-Y^{n,a}$ is a semimartingale so it obviously has a quadratic variation as well and finally we also used the fact that $\Gamma^n$ has zero quadratic variation. To finish the proof we must show that both of the terms on the right-most side of \eqref{ineqGamma} vanishes as $n\to\infty$. For the first term on the right-most side of \eqref{ineqGamma} we have by Lemma \ref{LowLemma} that
\begin{align}\label{fnquad}
\left[ f(X)-f_n(X) \right]_t=\int_0^t \left(f'(X_{s-})-f'_n(X_{s-})\right)^2d[X]^c_s+\sum_{s\le t} \Delta \left(f(X_s)-f_n(X_s)\right)^2.
\end{align}
The second term on the right-hand side of \eqref{fnquad} will vanish a.s. as $n\to\infty$ due to the uniform convergence of $f_n\to f$. The first term will vanish a.s. by dominated convergence. We now tackle the second term on the right-most side of \eqref{ineqGamma}. Since $\tilde{\mu}$ and it's compensator measure $\nu_c$ are void of any predictable jumps it follows from Theorem 1 of chapter 3, section 5 in \cite{Mart}, that if $g(s,x,\omega)$ is locally integrable then 
$$\left[\int_{[0,.]}\int_{|x|\le a} g(s,x,\omega)(\tilde{\mu}-\nu_c)(ds,dx)\right]_t=\int_{[0,t]}\int_{|x|\le a}g(s,x,\omega)^2\tilde{\mu}(ds,dx).$$ 
Using this fact for the second term as well as applying Lemma \ref{triangle}, we make the following estimates,
\begin{align*}
&[Y^a-Y^{n,a}]_t^\frac 12\le\left[\sum_{s\le t}\left(f_n(X_s)-f(X_s)-(f_n(X_{s-})-f(X_{s-}))-\Delta X_s(p_n(X_{s-})-f'(X_{s-})) \right)1_{|\Delta X_s|>a}\right]_t^\frac 12
\\
&+\left[\int_0^.(p_n(X_{s-})-f'(X_{s-}))dZ_s\right]_t^\frac 12
\\
&+\left[\int_0^.\int_{ |x|\le a}\left(f_n(X_{s-}+x)-f(X_{s-}+x)-(f_n(X_{s-})-f(X_{s-}))-x(f'(X_{s-})-p_n(X_{s-}))\right)(\tilde{\mu}-\nu_c)(ds,dx)\right]_t^\frac 12
\\
&+\left[\sum_{s\le .}\int_{ |x|\le a}\left(f_n(X_{s-}+x)-f(X_{s-}+x)-(f_n(X_{s-})-f(X_{s-}))-x(f'(X_{s-})-p_n(X_{s-})\right)\mu(\{s\},dx)\right]^\frac 12_t
\\
&\le \sqrt{\sum_{s\le t} \left((f_n(X_s)-f(X_s))1_{|\Delta X_s|>a}) \right)^2}+\sqrt{\sum_{s\le t} \left((f_n(X_{s_-})-f(X_{s_-}))1_{|\Delta X_s|>a}) \right)^2}
\\
&+\sqrt{\sum_{s\le t} \left(\Delta X_s(p_n(X_{s-})-f'(X_{s-}))1_{|\Delta X_s|>a} \right)^2}
+\sqrt{\int_0^t(p_n(X_{s-})-f'(X_{s-}))^2d[Z]_s}
\\
&+\left(\int_0^t\int_{ |x|\le a}\left(f_n(X_{s-}+x)-f(X_{s-}+x)-(f_n(X_{s-})-f(X_{s-}))-x(f'(X_{s-})-p_n(X_{s-}))\right)^2\tilde{\mu}(ds,dx)\right)^\frac 12
\\
&+\left(\sum_{s\le t, s\in \mathcal{A}}\left(\int_{ |x|\le a}\left(f_n(X_{s-}+x)-f(X_{s-}+x)-(f_n(X_{s-})-f(X_{s-}))-x(f'(X_{s-})-p_n(X_{s-})\right)\mu(\{s\},dx)\right)^2\right)^\frac 12,
\end{align*}

Since the first three sums contain only finitely many jumps a.s., the pointwise convergence ensures pathwise convergence to zero. For the fourth term we use the fact that $(p_n(X_{s-})-f'(X_{s-}))^2\le (2S_m+1)^2$, so we can employ the dominated convergence theorem pathwise (this is a FV process) to conclude that this term vanishes. Now we consider term five. We have the following dominating bound for the integrand,
\begin{align}\label{est}
&\left(f_n(X_{s-}+x)-f(X_{s-}+x)-(f_n(X_{s-})-f(X_{s-}))-x(f'(X_{s-})-p_n(X_{s-}))\right)^2=\nonumber
\\
&\left(x\int_0^1(p_n(X_{s_-}+\theta x)-f'(X_{s_-}+\theta x))d\theta -x(f'(X_{s-})-p_n(X_{s-}))\right)^2\le \nonumber
\\
&x^2\left(\int_0^1(|p_n(X_{s_-}+\theta x)|+S_m)d\theta +(S_m+|p_n(X_{s-})|)\right)^2\le 4x^2\left(2S_m+1\right)^2,
\end{align}
which is $\tilde{\mu}(ds,dx)$-integrable (since $\int_0^t\int_{ |x|\le a}x^2\tilde{\mu}(ds,dx)\le \int_0^t\int_{ |x|\le a}x^2\mu(ds,dx)\le [X]_t$), so we may again apply the dominated convergence theorem (pathwise). For term six we can analogously dominate the integrand by $2|x|(2S+1)$ and apply dominated convergence yet again. 
\end{proof}
By Theorem 1.2 in \cite{Low}, if $f$ is locally Lipschitz continuous then $f(X)$ is the sum of a semimartingale and a process of zero \textit{continuous} quadratic variation in the strong sense. With the aid of Theorem \ref{absolut} and the additional hypothesis of right-hand derivatives we are able to refine this in order to conclude that $f(X)$ is the sum of a semimartingale and process of zero quadratic variation in the strong sense (i.e. a strong Dirichlet process).
\begin{corollary}
Suppose $Z$ is a semimartingale and that $f$ is locally Lipschitz continuous with right-hand derivatives. Then $f(Z)$ is a strong Dirichlet process.
\end{corollary}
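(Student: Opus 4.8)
The plan is to read off the required decomposition directly from Theorem \ref{absolut}, invoking the right-hand derivative hypothesis only where Lemma \ref{LowLemma} needs it, and then to promote the terminal-time vanishing of the remainder's quadratic variation to the uniform statement demanded by Definition \ref{partidef}, using continuity.

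First I would verify the hypotheses of Theorem \ref{absolut}. A semimartingale $Z$ is trivially a Dirichlet process (take $C\equiv 0$), and by the result of \cite{Low} recalled in the remark above every semimartingale satisfies Assumption \ref{aslow}. Since $f$ is by hypothesis locally Lipschitz with right-hand derivatives everywhere, the pair $(f,Z)$ meets the hypotheses of Lemma \ref{LowLemma}, hence of Theorem \ref{absolut}. Fixing any $a>0$ and applying that theorem gives $f(Z)=Y^a+\Gamma^a$, where $Y^a$ is a semimartingale and $\Gamma^a$ is an adapted \emph{continuous} process with zero quadratic variation. Crucially, inspection of the proof of Theorem \ref{absolut} shows this last property is obtained along an \emph{arbitrary} partition sequence: $[\Gamma^a]^{P^k}_t\to 0$ in probability whenever $\n P^k\n\to 0$.

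It remains to promote this terminal-time convergence to $([\Gamma^a]^{P^k})^*_t\to 0$, as required by Definition \ref{partidef}. This is where continuity of $\Gamma^a$ enters. Fix a path and $s\le t$; in the sum $[\Gamma^a]^{P^k}_s$ every summand is either a complete increment $(\Gamma^a_{\tau_i}-\Gamma^a_{\tau_{i-1}})^2$ (which also appears in $[\Gamma^a]^{P^k}_t$) or the single boundary term $(\Gamma^a_s-\Gamma^a_{\tau_{j-1}})^2$ for the block containing $s$, whence
$$[\Gamma^a]^{P^k}_s\le [\Gamma^a]^{P^k}_t+\max_i\omega_i^2,$$
where $\omega_i$ is the oscillation of $\Gamma^a$ over $[\tau_{i-1},\tau_i]$. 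Since $\Gamma^a$ is continuous on the compact $[0,t]$ it is uniformly continuous, so $\max_i\omega_i^2\to 0$ pathwise as $\n P^k\n\to 0$; combined with $[\Gamma^a]^{P^k}_t\to 0$ in probability this yields $([\Gamma^a]^{P^k})^*_t\to 0$ in probability. Thus $\Gamma^a$ has zero quadratic variation in the strong sense, and $f(Z)=Y^a+\Gamma^a$ displays $f(Z)$ as a semimartingale plus a continuous process of zero strong quadratic variation, i.e. a strong Dirichlet process.

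The main obstacle is precisely this last upgrade from a limit at the terminal time to a uniform-in-time limit: it is invisible in the weak-sense theory but genuine in the strong sense, and it can be closed only because Theorem \ref{absolut} delivers a \emph{continuous} remainder $\Gamma^a$. This continuity is exactly the refinement over the plain locally Lipschitz conclusion of Theorem 1.2 in \cite{Low} (which controls only the continuous part of the quadratic variation), and it is what the extra right-hand-derivative hypothesis buys.
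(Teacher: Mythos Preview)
Your proof is correct and follows the same route as the paper: verify Assumption \ref{aslow} via Lowther's result for semimartingales, then invoke Theorem \ref{absolut} to obtain the decomposition $f(Z)=Y^a+\Gamma^a$. The paper's proof is a two-line appeal to these facts and does not spell out your terminal-to-uniform upgrade for $[\Gamma^a]$; your oscillation argument is a legitimate and correct elaboration of a point the paper leaves implicit in its reading of Theorem \ref{absolut}.
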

\begin{proof}
By Corollary 3.4 in \cite{Low} Assumption \ref{aslow} is fulfilled for all semimartingales. The result now follows directly from Theorem \ref{absolut}.
\end{proof}
\begin{lemma}\label{condilemma}
Suppose $g$ is $\mathbb{B}(C[0,t])\times\mathbb{B}(\mathbb{R})\times\mathbb{B}(\mathbb{R})$-measurable and bounded. Suppose also that $W$ is an adapted continuous process on $[0,t]$, $Y_1$ and $Y_2$ are two real-valued random variables and that $W$ (as a random variable taking values in $C[0,t]$) is independent from $Y_1$ and $Y_2$, then if we let $h(y_1,y_2)=\E\left[g(W,y_1,y_2)\right]$ we have that
$$\E\left[g(W,Y_1,Y_2)\|Y_1,Y_2\right]=h(Y_1,Y_2), \mathsf{a.s.}. $$
\end{lemma}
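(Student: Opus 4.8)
The plan is to identify $h(Y_1,Y_2)$ as a version of the conditional expectation by verifying its two defining properties: that it is $\sigma(Y_1,Y_2)$-measurable and integrable, and that it integrates correctly against every bounded $\sigma(Y_1,Y_2)$-measurable test function. Since $C[0,t]$ with the supremum norm is a separable Banach space, $W$ is a genuine $C[0,t]$-valued random element, and (interpreting the hypothesis as independence of $W$ from the vector $(Y_1,Y_2)$) the joint law of $(W,Y_1,Y_2)$ on $C[0,t]\times\R^2$ factorizes as the product $\mu_W\otimes\mu_{(Y_1,Y_2)}$, where $\mu_W$ is the law of $W$ and $\mu_{(Y_1,Y_2)}$ the law of $(Y_1,Y_2)$.

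First I would establish that $h$ is Borel measurable. Writing $h(y_1,y_2)=\int_{C[0,t]} g(w,y_1,y_2)\,\mu_W(dw)$, this is the Fubini--Tonelli statement that the partial integral of a bounded $\mathbb{B}(C[0,t])\times\mathbb{B}(\R)\times\mathbb{B}(\R)$-measurable function against the probability measure $\mu_W$ is $\mathbb{B}(\R)\times\mathbb{B}(\R)=\mathbb{B}(\R^2)$-measurable (the last identification being valid since $\R$ is separable). In particular $h$ is bounded, so $h(Y_1,Y_2)$ is a bounded $\sigma(Y_1,Y_2)$-measurable random variable.

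Second, for the integration identity I would take an arbitrary bounded Borel $\psi:\R^2\to\R$ and compute, using the product structure of the joint law together with Fubini,
\[
\E\!\left[g(W,Y_1,Y_2)\,\psi(Y_1,Y_2)\right]
=\iint g(w,y_1,y_2)\psi(y_1,y_2)\,\mu_W(dw)\,\mu_{(Y_1,Y_2)}(d(y_1,y_2))
=\E\!\left[h(Y_1,Y_2)\,\psi(Y_1,Y_2)\right].
\]
Since every bounded $\sigma(Y_1,Y_2)$-measurable random variable is, by Doob--Dynkin, of the form $\psi(Y_1,Y_2)$ for some bounded Borel $\psi$ (indeed testing against indicators $\psi=1_B$ already suffices), this is exactly the defining relation for the conditional expectation, whence $\E[g(W,Y_1,Y_2)\|Y_1,Y_2]=h(Y_1,Y_2)$ almost surely.

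The only point requiring genuine care is the joint measurability feeding into Fubini, namely that the law of $(W,Y_1,Y_2)$ factorizes and that Fubini applies to the bounded product-measurable integrand $g$; both are standard once $W$ is viewed as a $C[0,t]$-valued random element. Should one prefer to sidestep the product-law formulation, the same conclusion follows from the functional monotone class theorem: one first checks the claim for product integrands $g(w,y_1,y_2)=\phi(w)\psi(y_1,y_2)$, where independence gives $\E[\phi(W)\psi(Y_1,Y_2)\|Y_1,Y_2]=\psi(Y_1,Y_2)\,\E[\phi(W)]=h(Y_1,Y_2)$ directly, and then extends to all bounded measurable $g$, since such products form a multiplicative class generating $\mathbb{B}(C[0,t])\times\mathbb{B}(\R)\times\mathbb{B}(\R)$.
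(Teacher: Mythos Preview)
Your proof is correct and follows essentially the same approach as the paper: verify the defining relation $\E[g(W,Y_1,Y_2)\,1_A]=\E[h(Y_1,Y_2)\,1_A]$ for $A\in\sigma(Y_1,Y_2)$, first for product integrands and then extend via the monotone class theorem (the paper does exactly this with indicators $g=1_C\,1_{B_1}\,1_{B_2}$). Your Fubini/product-law formulation is just a repackaging of the same argument, and you in fact give the monotone class route explicitly as your alternative; you also supply a bit more detail than the paper on the Borel measurability of $h$.
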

\begin{proof}
We show that for any $A\in \sigma(Y_1,Y_2)$
$$\E\left[g(W,Y_1,Y_2)1_A\right]=\E\left[h(Y_1,Y_2)1_A\right]. $$
Suppose that $g(w,y_1,y_2)=1_C(w)1_{B_1}(y_1)1_{B_2}(y_2)$, for some $C\in \mathbb{B}\left(C[0,t]\right)$ and $B_1,B_2\in \mathbb{B}(\mathbb{R})$. In this case we get 
$$\E\left[g(W,Y_1,Y_2)1_A\right]=\P(W^{-1}(C))\P\left(Y_1^{-1}(B_1)\cap Y_2^{-1}(B_2)\cap A\right)=\E\left[h(Y_1,Y_2)1_A\right]. $$
Since sets of the form $C\times B_1\times B_2$ form a $\pi$-system generating $\mathbb{B}\left(C[0,t]\right)\times  \mathbb{B}(\mathbb{R})\times  \mathbb{B}(\mathbb{R})$ the result now follows from the monotone class theorem.
\end{proof}
Let $X$ be a cadlag process. We will denote by $\sigma(X)$ the sigma algebra generated by $\left\{X^{-1}(B):B\in\mathcal{D}\right\}$, where $\mathcal{D}$ denotes the Borel sets in $D[0,t]$ (with respect to the Skorokhod topology). The standard notion of independence of two stochastic processes is the following
\begin{defin}
Suppose $X_1,X_2$ are two cadlag processes on $[0,t]$. We say that $X_1$ and $X_2$ are independent if for any $k\in\N$, $t_1,...,t_k\in[0,t]$ and $B_1,B_2\in\mathbb{B}(\R^k)$
$$\P\left(\left(X^1_{t_1},...,X^1_{t_k}\right)\in B_1,\left(X^2_{t_1},...,X^2_{t_k}\right)\in B_2\right)
=
\P\left(\left(X^1_{t_1},...,X^1_{t_k}\right)\in B_1\right)\P\left(\left(X^2_{t_1},...,X^2_{t_k}\right)\in B_2\right) $$
\end{defin}
\begin{lemma}\label{independenceLemma}
If $X^1$ and $X^2$ are two independent cadlag processes, then they are also independent as random variables in $D[0,t]$ (i.e. $\sigma(X^2)$ and $\sigma(X^2)$ are independent).
\end{lemma}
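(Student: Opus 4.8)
The plan is to reduce the statement to the classical fact that independence of two $\pi$-systems propagates to the $\sigma$-algebras they generate, the only genuine input being a measure-theoretic description of $\mathcal{D}$. Write $\pi_s:D[0,t]\to\R$ for the coordinate evaluation $\pi_s(x)=x(s)$. The key fact I would invoke is that, since $D[0,t]$ equipped with the Skorokhod ($J_1$) topology is a Polish space, its Borel $\sigma$-algebra $\mathcal{D}$ coincides with $\sigma(\pi_s:s\in[0,t])$, the $\sigma$-algebra generated by the coordinate projections (see e.g. Billingsley, \emph{Convergence of Probability Measures}). Since $X^i$ is cadlag it is a measurable map into $(D[0,t],\mathcal{D})$, and the above identity shows that $\sigma(X^i)=(X^i)^{-1}(\mathcal{D})$ is generated by the finite-dimensional cylinder events
$$\mathcal{G}_i=\left\{\{(X^i_{t_1},\ldots,X^i_{t_k})\in B\}:k\in\N,\ t_1,\ldots,t_k\in[0,t],\ B\in\mathbb{B}(\R^k)\right\}.$$

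Next I would check the two routine structural points. First, each $\mathcal{G}_i$ is a $\pi$-system containing $\Omega$: the intersection of two cylinder events, based on time-grids $\{s_j\}$ and $\{r_l\}$, is again a cylinder event based on the merged grid $\{s_j\}\cup\{r_l\}$ with the two constraint sets lifted to the larger product and intersected, and $\Omega$ is recovered by taking $B=\R^k$. Second, $\mathcal{G}_1$ and $\mathcal{G}_2$ are independent as set systems: given $A=\{(X^1_{s_1},\ldots,X^1_{s_m})\in B_1\}\in\mathcal{G}_1$ and $A'=\{(X^2_{r_1},\ldots,X^2_{r_n})\in B_2\}\in\mathcal{G}_2$, I enlarge both to the common grid $\{t_1,\ldots,t_k\}=\{s_j\}\cup\{r_l\}$ by padding the unused coordinates with $\R$, rewriting $A$ and $A'$ as constraints $\{(X^1_{t_1},\ldots,X^1_{t_k})\in\tilde B_1\}$ and $\{(X^2_{t_1},\ldots,X^2_{t_k})\in\tilde B_2\}$ at the \emph{same} times; the hypothesis of the lemma then gives precisely $\P(A\cap A')=\P(A)\P(A')$.

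The final step is the standard Dynkin-system argument: fixing $A'\in\mathcal{G}_2$, the collection $\{A:\P(A\cap A')=\P(A)\P(A')\}$ is a $\lambda$-system containing the $\pi$-system $\mathcal{G}_1$, hence contains $\sigma(\mathcal{G}_1)=\sigma(X^1)$; repeating with the roles reversed (now fixing $A\in\sigma(X^1)$ and letting $A'$ range over $\mathcal{G}_2$, whose generated $\lambda$-system contains $\sigma(\mathcal{G}_2)=\sigma(X^2)$) upgrades this to independence of $\sigma(X^1)$ and $\sigma(X^2)$, as required.

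I expect the only real obstacle to be the cited identity $\mathcal{D}=\sigma(\pi_s:s\in[0,t])$; everything else is routine $\pi$--$\lambda$ machinery. It is worth recording briefly why the identity holds: each $\pi_s$ is Borel measurable on $(D[0,t],\mathcal{D})$ (continuous at paths having $s$ as a continuity point, and measurable in general), so $\sigma(\pi_s:s\in[0,t])\subseteq\mathcal{D}$, while separability of the Skorokhod metric lets one express every open set as a countable combination of cylinder events, giving the reverse inclusion.
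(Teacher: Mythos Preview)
Your proof is correct and actually takes a cleaner route than the paper's. Both arguments start from the same measure-theoretic fact---that the Borel $\sigma$-algebra $\mathcal{D}$ on $D[0,t]$ is generated by the finite-dimensional cylinder sets (the paper cites the same chapter of Billingsley)---and both merge time-grids to show that cylinder events for $X^1$ and $X^2$ are independent. The difference lies in how one extends this to all of $\sigma(X^1)$ and $\sigma(X^2)$: you invoke the Dynkin $\pi$--$\lambda$ theorem directly, which is the standard and transparent way to do it. The paper instead argues by approximation in measure: for arbitrary $F_i=\{X^i\in D_i\}$ it finds finite unions $H_i$ of cylinder sets with $\P(F_i\Delta H_i)<\epsilon$, shows $\P(H_1\cap H_2)=\P(H_1)\P(H_2)$ via the grid-merging step, and then bounds $|\P(F_1\cap F_2)-\P(F_1)\P(F_2)|$ by a constant multiple of $\epsilon$. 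Your approach is shorter and avoids the symmetric-difference bookkeeping; the paper's approach is more explicit but essentially reproves the $\pi$--$\lambda$ mechanism by hand for this particular situation.
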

\begin{proof}
Recall that $\mathcal{D}$ is generated by the the following family of sets (see for instance chapter 3, section 12 of \cite{Billy}), 
$$\mathcal{B}=\left\{\pi_{t_1,...,t_k}^{-1}(B): k\in\N, t_1,...,t_k\in [0,t],B\in\mathbb{B}(\mathbb{R}^k\right)\},$$
where $\pi_{t_1,...,t_k}:\mathbb{D}\to\R^k$ denotes the projection $\pi_{t_1,...,t_k}(x)=\left(x_{t_1},...,x_{t_k}\right)$ (for $x\in\mathbb{D}$, the space of cadlag functions). $\mathcal{B}$ is a pi-system that is also closed under complements. We may now introduce the probability measure on $\mathcal{D}$, $\P_X(A)=\P\left(X\in A\right)$ and we now have that for any $A\in\mathcal{D}$ and $\epsilon>0$ there exists a finite union of sets $\bigcup_{l=1}^kC_l$ with $C_l\in\mathcal{B}$ such that $\P\left(A\Delta \bigcup_{l=1^k}C_l\right)<\epsilon$. By definition $C_l=\pi_{t_1,...,t_{k_l}}^{-1}(B_l)$ for some $B_l\in \mathbb{B}(\R^{k_l})$. Rephrasing the independent process assumption implies for $B_1,B_2\in\mathbb{B}(\R^k)$
\begin{align}\label{indep}
\P\left(\left\{\pi_{t_1,...,t_k}(X^1)\in B_1\right\}\bigcap \left\{\pi_{t_1,...,t_k}(X^2)\in B^2\right\}\right)
=
\P\left(\pi_{t_1,...,t_k}(X^1)\in B_1\right)
\P\left( \pi_{t_1,...,t_k}(X^2)\in B_2\right).
\end{align}
Let $F_1\in\sigma(X^1),F_2\in\sigma(X^2)$ then $F_1=\{\omega:X^1(\omega)\in D_1\}$, $F_2=\{\omega:X^2(\omega)\in D_2\}$, for some sets $D_1,D_2\in\mathcal{D}$. Fixing $\epsilon>0$ we now choose $\bigcup_{l=1}^{k_1}C_l^1$, $\bigcup_{l=1}^{k_2}C_l^2$ with $C_l^1,C_l^2\in\mathcal{B}$, $C_l^1=\pi_{t_1^{(1)},...,t_{k_l}^{(1)}}^{-1}(B_l^1)$, $C_l^1=\pi_{t_1^{(2)},...,t_{k_l}^{(2)}}^{-1}(B_l^2)$ for some $B_l^2$ such that $\P_{X^1}\left(D_1\Delta \bigcup_{l=1}^{k_1}C_l^1\right)<\epsilon$, $\P_{X^2}\left(D_2\Delta \bigcup_{l=1}^{k_2}C_l^2\right)<\epsilon$ i.e. $\P\left(X^1\in D_1\Delta \bigcup_{l=1}^{k_1}C_l^1\right)<\epsilon$ and $\P\left(X^2\in D_2\Delta \bigcup_{l=1}^{k_2}C_l^2\right)<\epsilon$. Let $t_1,...,t_k$ be an enumeration of the points $\left(\bigcup_{l=1}^{K^1}\{t_1^{(1)},...,t_{k_l}^{(1)}\}\right)\bigcup\left(\bigcup_{l=1}^{K^2}\{t_1^{(2)},...,t_{k_l}^{(2)}\}\right)$ and let $k$ denote the cardinality of this set. By replacing $B_l^{i}$ ($i=1,2$)with $B_l^i\times\R^{k-k_l}$ whenever $k_l<k$ we have due to \eqref{indep},
\begin{align*}
\P\left(\left\{X^1\in \bigcup_{l=1}^{K_1}C^1_l\right\}\bigcap\left\{X^2\in \bigcup_{l=1}^{K_2}C^2_l\right\}\right)
&=
\P\left(\left\{X^1\in \bigcup_{l=1}^{K_1}\pi_{t_1,...,t_{k}}^{-1}(B_l^1)\right\}\bigcap\left\{X^2\in \bigcup_{l=1}^{K_2}\pi_{t_1,...,t_{k}}^{-1}(B_l^2)\right\}\right)
\\
&=
\P\left(\left\{X^1\in \pi_{t_1,...,t_{k}}^{-1}\left(\bigcup_{l=1}^{K_1}B_l^1\right)\right\}\bigcap\left\{X^2\in \pi_{t_1,...,t_{k}}^{-1}\left(\bigcup_{l=1}^{K_2}B_l^2\right)\right\}\right)
\\
&=
\P\left(\left\{\pi_{t_1,...,t_{k}}(X^1)\in\left(\bigcup_{l=1}^{K_1}B_l^1\right)\right\}\bigcap\left\{\pi_{t_1,...,t_{k}}(X^2)\in\left(\bigcup_{l=1}^{K_2}B_l^2\right)\right\}\right)
\\
&=
\P\left(\pi_{t_1,...,t_{k}}(X^1)\in\left(\bigcup_{l=1}^{K_1}B_l^1\right)\right)
\P\left(\pi_{t_1,...,t_{k}}(X^2)\in\left(\bigcup_{l=1}^{K_2}B_l^2\right)\right)
\end{align*}
Let $F_1=\{X^1\in D_1\}$, $F_2=\{X^2\in D_2\}$, $H_1=\left\{X^1\in \bigcup_{l=1}^{K_1}C^1_l\right\}$ and $H_2=\left\{X^2\in \bigcup_{l=1}^{K_2}C^2_l\right\}$. We now have that
$$\left|\P\left(F_1\cap F_2\right)-\P\left(H_1\cap H_2\right)\right| 
\le
\P\left(\left(F_1\cap F_2\right)\Delta\left(H_1\cap H_2\right)\right). $$
For the right-hand side above we have
\begin{align}\label{OG}
\P\left(\left(F_1\cap F_2\right)\Delta\left(H_1\cap H_2\right)\right)
=
\P\left(\left(F_1\cap F_2\right)\cap\left(H_1^c\cup H_2^c\right)\right)
+
\P\left(\left(H_1\cap H_2\right)\cap\left(F_1^c\cup F_2^c\right)\right).
\end{align}
As
\begin{align*}
\P\left(\left(F_1\cap F_2\right)\cap\left(H_1^c\cup H_2^c\right)\right)
\le
\P\left(F_1\cap F_2\cap H_1^c\right)
+
\P\left(F_1\cap F_2\cap H_2^c\right)
\le
\P\left(F_1\Delta H_1\right)
+
\P\left(F_2\Delta H_2\right)<2\epsilon.
\end{align*}
By an analogous arguments $\P\left(\left(F_1\cap F_2\right)\cap\left(H_1^c\cup H_2^c\right)\right)
\le 2\epsilon$. This leads to
\begin{align}\label{F1F2H1H2}
\left|\P\left(F_1\cap F_2\right)-\P\left(H_1\cap H_2\right)\right| 
\le
\P\left(\left(F_1\cap F_2\right)\Delta\left(H_1\cap H_2\right)\right)<4\epsilon.
\end{align}
Utilizing \eqref{F1F2H1H2} leads to
\begin{align*}
\left|\P\left(F_1\cap F_2\right)-\P\left(F_1\right)\P\left(F_2\right)\right|
&\le
\left|\P\left(F_1\cap F_2\right)-\P\left(H_1\cap H_2\right)\right|
+
\left|\P\left(F_1\right)\P\left(F_2\right)-\P\left(H_1\right)\P\left(H_2\right)\right|
\\
&\le
\left|\P\left(F_1\cap F_2\right)-\P\left(H_1\cap H_2\right)\right|
+
\P\left(F_1\right)\left|\P\left(F_2\right)-\P\left(H_2\right)\right|
+
\P\left(H_2\right)\left|\P\left(F_1\right)-\P\left(H_1\right)\right|
\\
&\le
4\epsilon
+
\left|\P\left(F_2\Delta H_2\right)\right|
+
\left|\P\left(F_1\Delta H_1\right)\right|
\le 6\epsilon.
\end{align*}
Letting $\epsilon\to 0$ gives $\P\left(F_1\cap F_2\right)=\P\left(F_1\right)\P\left(F_2\right)$, which implies that $X^1$ and $X^2$ are independent as random variables.
\end{proof}

\section{Main results}\label{main}


Any Dirichlet process $X=M+V+C$, where $M$ is a local martingale, $V$ is a FV process and $C$ is a process with zero continuous variation can be decomposed as $X=X^c+X^d$, where $X^c=M^c+V^c+C$, $X^d=M^d+V^d$, $M^c$ is the continuous part of $M$, $M^d$ is the purely discontinuous part of $M$, $V^c$ is the continuous part of $V$ and $V^d$ is the jump part of $V$. 
\\
\\
What now follows are the main two results of this manuscript. In our first result we must make an assumption that essentially forbids "hand-picking" jumps depending on the continuous part of the process (this way, one could hand-pick jumps so that we exactly hit the discontinuity points of $f'$). For the same reason we want the distribution of the continuous part of the process as each point $s\le t$ to be non-atomic (i.e. $P(X^c_s=a)=0$ for all $a\in\R$).
\begin{thm}\label{a.s.ettf}
Let $f$ be a primitive function of a cadlag function, assume that $X$ is a Dirichlet process such that $X^c_s$ has a non-atomic distribution for every $s\in[0,t]$ and that the processes $X^c$ and $X^d$ are independent. Let $\{X^n\}_n$ be Dirichlet processes in the strong sense such that $X^n_s\xrightarrow{a.s.}X_s$ for each $s\le t$, $[X^n-X]_t\xrightarrow{a.s.}0$ and that $\{(X^n)^*_t\}_n$ is bounded in probability, then
$$[f(X^n)-f(X)]_t \to 0 \hspace{2mm} a.s..$$
\end{thm}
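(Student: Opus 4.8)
The plan is to reduce the problem to the already-understood $C^2$ case by approximating $f$ by smooth functions, and to isolate the only genuinely new phenomenon: that the discontinuity set of $f'$ must be invisible to the continuous quadratic variation of $X$. Throughout I would first localize: since $X^*_t<\infty$ a.s. and $\{(X^n)^*_t\}_n$ is bounded in probability, it suffices (up to an event of small probability and a stopping argument) to assume all processes take values in a fixed compact $[-R,R]$, on which $g:=f'$ is bounded, say by $M$, so that $f$ is globally Lipschitz there.

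The first and central step is to verify that $(f,X)$ satisfies Assumption \ref{aslow}, i.e. $\int_0^t 1_{\{X_s\in J\}}\rmd[X]^c_s=0$ a.s., where $J$ is the (countable) set of discontinuities of the cadlag function $g=f'$, so that $\mathsf{diff}(f)=\R\setminus J$. Covering $J$ by shrinking open intervals and using dominated convergence against the finite measure $\rmd[X]^c$, this reduces to showing $\int_0^t 1_{\{X_s=a\}}\rmd[X]^c_s=0$ for each fixed $a\in\R$. Writing $X=X^c+X^d$ and $\rmd[X]^c=\rmd[X^c]$, I would condition on $X^d$: by Lemma \ref{independenceLemma} the paths $X^c$ and $X^d$ are independent as random elements of $D[0,t]$, and Lemma \ref{condilemma} lets me freeze $X^d$ to a deterministic cadlag trajectory, reducing matters to $\E\left[\int_0^t 1_{\{X^c_s=b_s\}}\rmd[X^c]_s\right]=0$ for deterministic $b$. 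Majorizing the indicator by continuous tent functions centred at $b_s$ (for which $\int\cdot\,\rmd[X^c]$ is the limit of its Riemann--Stieltjes sums $\sum_i(\cdot)(X^c_{t_i}-X^c_{t_{i-1}})^2$), each such sum has vanishing expectation because $\P(X^c_{t_i}=b_{t_i})=0$ by non-atomicity of the marginals; letting the bumps shrink gives the claim. This is the step where both hypotheses on $X$ are indispensable.

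With Assumption \ref{aslow} in hand, I would choose $C^2$ approximants $f_m$ with $f_m\to f$ locally uniformly and $g_m:=f_m'\to g$ pointwise off $J$, $|g_m|\le M$, and split by the triangle inequality of Lemma \ref{triangle}:
\begin{align*}
[f(X^n)-f(X)]_t^{1/2}\le [f(X^n)-f_m(X^n)]_t^{1/2}+[f_m(X^n)-f_m(X)]_t^{1/2}+[f_m(X)-f(X)]_t^{1/2}.
\end{align*}
The last term is handled by Lemma \ref{LowLemma} (applicable since $f-f_m$ inherits Assumption \ref{aslow} from $f$): it equals $\int_0^t(g-g_m)^2(X_{s-})\rmd[X]^c_s+\sum_{s\le t}\Delta\big((f-f_m)(X_s)\big)^2$, and both pieces tend to $0$ a.s. as $m\to\infty$ by dominated convergence — crucially, $g_m(X_{s-})\to g(X_{s-})$ holds $\rmd[X]^c$-a.e. precisely because Assumption \ref{aslow} keeps $X_{s-}$ out of $J$. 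The middle term is the $C^2$ case and is disposed of, for fixed $m$ as $n\to\infty$, via the It\^o-type decomposition of Lemma \ref{Follm}: expanding $f_m(X^n)$ and $f_m(X)$, the $\tfrac12\int f_m''\,\rmd[\cdot]^c$ parts carry zero quadratic variation, and the remaining continuous and jump contributions to $[f_m(X^n)-f_m(X)]_t$ vanish using $X^n_s\to X_s$, $[X^n-X]_t\to0$ (hence $[X^n]^c,[X^n,X]^c\to[X]^c$) and continuity of $g_m$; this is the argument shared with Proposition \ref{PropVar}.

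The main obstacle is the first term, $[(f-f_m)(X^n)]_t$, since $X^n$ is \emph{not} assumed to satisfy Assumption \ref{aslow}, so Lemma \ref{LowLemma} is unavailable and even a clean quadratic-variation formula is in doubt. Here I would expand increments by the fundamental theorem of calculus, $(f-f_m)(X^n_{\tau_i})-(f-f_m)(X^n_{\tau_{i-1}})=\int_0^1(g-g_m)(X^n_{\tau_{i-1}}+\theta\,\Delta_i X^n)\rmd\theta\,\Delta_i X^n$, and compare the $(g-g_m)$-weights evaluated along $X^n$ with those along $X$. The ``main'' part, carrying $X$-weights against $X^n$-increments, converges (as the mesh vanishes and then $n\to\infty$) to $[(f-f_m)(X)]_t$ by the same $[X^n]\to[X]$ reasoning, hence to $0$ with $m$; the delicate part is the remainder, which measures how much quadratic variation $X^n$ accumulates near $J$. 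Controlling it uniformly in $n$ requires transferring the occupation-null property established for $X$ to the approximants $X^n$ — showing the near-$J$ occupation of $X^n$ converges to that of $X$, namely zero — using $X^n_s\to X_s$ and $[X^n-X]_t\to0$. I expect this transfer of the ``avoids $J$'' property from the limit to the sequence, uniformly enough to survive the double limit in mesh and $n$, to be the crux; the remaining bookkeeping (the strong-sense $\limsup$ over stopping-time partitions, and upgrading convergence in probability to the claimed almost-sure convergence via the a.s. hypotheses) is routine by comparison.
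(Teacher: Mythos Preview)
Your approach diverges from the paper's, and the step you flag as ``the crux'' is a genuine gap that your sketch does not close.

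The paper does not approximate $f$ by $C^2$ functions in this proof. Instead it applies the It\^o decomposition of Theorem~\ref{absolut} to both $f(X)$ and $f(X^n)$, writing each as a semimartingale plus a continuous zero-quadratic-variation part, so that $[f(X^n)-f(X)]_t=[(Y^n)^a-Y^a]_t$ is an explicit \emph{semimartingale} quadratic variation. This is split (equation~\eqref{grisny2}) into seven concrete pieces---stochastic integrals against $Z$ and $Z^n$, compensated jump-measure integrals against $\tilde\mu-\nu_c$ and $\tilde\mu_n-(\nu_n)_c$, predictable-jump sums---and each is driven to zero separately. Your verification of Assumption~\ref{aslow} matches in spirit what the paper does for the $d[X]^c$-part of term two, but the paper requires a second consequence of the independence/non-atomicity hypotheses that you do not mention: for every jump time $T_k$ of $X$, the left limit $X_{T_k-}$ is almost surely a continuity point of $f'$. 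This is proved by observing that the truncated jump times $T_k^b$ are $\sigma(X^d)$-measurable, hence independent of $X^c$, and then invoking Lemma~\ref{condilemma}; it is what allows $f'(X^n_{T_k-})\to f'(X_{T_k-})$ in the jump terms despite the discontinuity of $f'$, and it is used repeatedly (terms two, three, five and six).

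Regarding your problematic term $[(f-f_m)(X^n)]_t$: the expansion you propose produces weights $(g-g_m)(X^n_{\tau_{i-1}}+\theta\,\Delta_iX^n)$ which neither vanish as $m\to\infty$ at points of $J$ nor are close to the corresponding weights along $X$, precisely because $g$ is discontinuous there. Nothing in the hypotheses prevents $X^n$ from placing positive $d[X^n]^c$-mass on $J$ for every $n$, so the ``transfer of the avoids-$J$ property'' you hope for does not follow from $X^n_s\to X_s$ and $[X^n-X]_t\to0$; this is exactly why the theorem imposes the independence and non-atomicity assumptions on $X$ but not on $X^n$. The paper sidesteps the issue: in its decomposition every appearance of $f'$ is integrated either against $d[X]^c$ (where Assumption~\ref{aslow} for $X$ applies) or over the jumps of $X$ (where the continuity-point fact above applies); the pieces that genuinely involve $\mu_n,\nu_n$ are handled not by any occupation statement about $X^n$, but by matching the large jumps of $X^n$ to those of $X$ via $[X^n-X]_t\to0$ and a carefully chosen truncation level $L(r)$ (cf.\ \eqref{jumptrick} and \eqref{A}).
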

\begin{rema}
The above theorem can be generalized to functions $f$ that are such that $\limsup_{h\to 0}\left|\frac{f(x+h)-f(x)}{h}\right|<\infty$ and with right-derivatives such that $f'$ has a countable set of discontinuities.
\end{rema}
With the weaker definition of quadratic variation (quadratic variation in the weak sense), we have a similar result, but this time we must strengthen then condition on our transform to be $C^1$.
\begin{thm}\label{C1}
Let $f\in C^1$. Let $\{X^n\}_n$ be Dirichlet processes such that for each $n$, $X^n$ and $X$ have quadratic variations along the same refining sequence and such that $X^n_s\xrightarrow{a.s.}X_s$ for each $s\le t$, $[X^n-X]_t\xrightarrow{a.s.}0$ and that $\{(X^n)^*_t\}_n$ is bounded in probability. Assume also that $X$ and $X^n$ where any predictable jumps is of finite variation. Then
\\$[f(X^n)-f(X)]_t\xrightarrow{a.s.}0$ as $n\to \infty$.
\end{thm}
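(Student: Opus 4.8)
The plan is to deduce the $C^1$ statement from its $C^2$ counterpart, Proposition \ref{PropVar}, by approximating $f$ by smooth functions and absorbing the error into the quadratic variation via the triangle inequality of Lemma \ref{triangle}. I would fix a mollification $\{f_m\}_m\subset C^2$ with $f_m\to f$ and $f_m'\to f'$ uniformly on every compact interval, and for $R>0$ set $\eta_{m,R}=\sup_{|x|\le R}|f'(x)-f_m'(x)|$, so that $\eta_{m,R}\to 0$ as $m\to\infty$ for each fixed $R$. Since $C^1$ (resp. $C^2$) transforms of Dirichlet processes are again Dirichlet along the common refining sequence (cf. \cite{NonCont}), all of $f(X^n)-f(X)$, $(f-f_m)(X^n)$, $f_m(X^n)-f_m(X)$ and $(f_m-f)(X)$ admit quadratic variation along that sequence, so Lemma \ref{triangle} is applicable. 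On the almost sure event where $[X^n-X]_t\to 0$, Lemma \ref{triangle} also gives $[X^n]_t^{1/2}\le [X]_t^{1/2}+[X^n-X]_t^{1/2}$, whence $K:=\sup_n[X^n]_t<\infty$ almost surely. I would carry out the estimate on the range-localizing event $\{X^*_t\le R\}\cap\{(X^n)^*_t\le R\ \forall n\}$; the passage from the boundedness in probability of $\{(X^n)^*_t\}_n$ to almost sure control of such events as $R\uparrow\infty$ is the main technical point and is discussed at the end.

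On this event Lemma \ref{triangle} yields
\begin{align*}
[f(X^n)-f(X)]_t^{1/2}\le [(f-f_m)(X^n)]_t^{1/2}+[f_m(X^n)-f_m(X)]_t^{1/2}+[(f_m-f)(X)]_t^{1/2}.
\end{align*}
For the two outer terms I would apply the mean value theorem directly to the approximating Riemann sums: whenever $X^n_{t_i},X^n_{t_{i-1}}\in[-R,R]$,
$$\left|(f-f_m)(X^n_{t_i})-(f-f_m)(X^n_{t_{i-1}})\right|\le \eta_{m,R}\left|X^n_{t_i}-X^n_{t_{i-1}}\right|,$$
so each approximating sum for $[(f-f_m)(X^n)]_t$ is bounded by $\eta_{m,R}^2$ times the corresponding sum for $[X^n]_t$; passing to the limit along the refining sequence gives $[(f-f_m)(X^n)]_t\le \eta_{m,R}^2[X^n]_t\le \eta_{m,R}^2K$, and identically $[(f_m-f)(X)]_t\le\eta_{m,R}^2K$. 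This uniform bound is the whole point of routing through Lemma \ref{triangle}: it is uniform in $n$ and never requires tracking the random points $X^n_{u-}$ at which $f'$ is evaluated.

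The middle term is handled by Proposition \ref{PropVar}: since $f_m\in C^2$ and $X^n,X$ satisfy its hypotheses (common refining sequence, $[X^n-X]_t\to 0$ a.s., $X^n_s\to X_s$ a.s. for $s\le t$, and $\{(X^n)^*_t\}_n$ bounded in probability), we have $[f_m(X^n)-f_m(X)]_t\to 0$ almost surely for each fixed $m$ (a countable family of almost sure statements, so they hold simultaneously). Combining the three estimates,
$$\limsup_{n\to\infty}[f(X^n)-f(X)]_t^{1/2}\le 2\eta_{m,R}K^{1/2},$$
and letting $m\to\infty$ makes the right-hand side vanish, giving the claim on the localizing event and hence almost surely.

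The substantive difficulty is not in this reduction but is inherited from Proposition \ref{PropVar}: for $C^2$ $f_m$ one is left, after Kunita--Watanabe estimates that use $[X^n-X]_t\to 0$, with terms of the form $\int_0^t\left(f_m'(X^n_{u-})-f_m'(X_{u-})\right)^2 d[X]^c_u$, which the Lipschitz bound reduces to showing $\int_0^t\left(X^n_{u-}-X_{u-}\right)^2 d[X]^c_u\to 0$. Here the fixed-time convergence $X^n_{u-}\to X_{u-}$ does hold (from $X^n_u\to X_u$, together with $\sup_s|\Delta X^n_s-\Delta X_s|\le [X^n-X]_t^{1/2}\to 0$, giving $X^n_{u-}=X^n_u-\Delta X^n_u\to X_u-\Delta X_u=X_{u-}$), but it must be transferred into convergence $d[X]^c$-almost everywhere against the random, possibly Lebesgue-singular, measure $[X]^c$, which is the genuine obstacle resolved inside Proposition \ref{PropVar}. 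Within the present argument the remaining obstacle is the localization, i.e. upgrading the boundedness in probability of $\{(X^n)^*_t\}_n$ to the almost sure range control used above; and the standing assumption that the predictable jumps of $X$ and $X^n$ have finite variation is what makes the underlying decomposition of \cite{NonCont}, and hence Proposition \ref{PropVar}, available.
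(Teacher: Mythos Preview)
Your approach is correct and genuinely different from the paper's. The paper does not reduce to the $C^2$ case; instead it re-runs the proof of Theorem \ref{a.s.ettf} verbatim, replacing Theorem \ref{absolut} by the $C^1$ It\^o decomposition of \cite{NonCont} (which is valid in the weak-sense setting) and noting that continuity of $f'$ makes all the arguments about $\mathsf{disc}(f')$, Assumption \ref{aslow}, and the independence of $X^c,X^d$ superfluous. Your route via mollification and Lemma \ref{triangle} is more modular: once Proposition \ref{PropVar} is available, the passage from $C^2$ to $C^1$ is a triangle inequality plus the uniform bound $[(f-f_m)(X^n)]_t\le\eta_{m,R}^2[X^n]_t$, and the It\^o decomposition never needs to be reopened. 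Two small corrections. First, Proposition \ref{PropVar} is proved from the F\"ollmer decomposition (Lemma \ref{OGFollm}), not from \cite{NonCont}; so your closing remark that the finite-variation-of-predictable-jumps hypothesis is what makes Proposition \ref{PropVar} available is off --- that hypothesis is used in the paper's direct route (which does invoke \cite{NonCont}) but your reduction does not actually need it, so your argument is in this respect slightly stronger than the paper's. Second, the range localization via $B_R=\{\sup_n(X^n)^*_t\vee X^*_t\le R\}$ that you flag as unresolved is handled in the paper by the bare assertion $\P(\cup_R B_R)=1$ in the proof of Theorem \ref{a.s.ettf}; you are therefore not losing anything relative to the paper on that point, and the issue is orthogonal to the $C^1$-versus-$C^2$ question.
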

The proof of Theorem \ref{C1} essentially boils down to a simplified version of the proof of Theorem \ref{a.s.ettf} (which we we give now). After the proof of Theorem \ref{a.s.ettf} we point out the differences in the proof of Theorem \ref{C1}.

\begin{proof}[Proof of Theorem \ref{a.s.ettf}]
As $f$ is the primitive function of a cadlag function, $f'$, it follows from the right continuity of $f'$, that $f$ has right-hand derivatives. Let $a>1$ be some (large) constant to be chosen later. Let $X=Z+C$ where $Z$ is a semimartingale and $C$ has zero quadratic variation. According to Theorem \ref{absolut} we have that $f(X_s)=Y^a_s+\Gamma^a_s$ where $Y^a$ is a semimartingale, $\Gamma^a$ is continuous and $[\Gamma^a]_t=0$ for all $t>0$. We recall that the expression for $Y^a$ is given by
\begin{align}\label{frepr}
Y_t&=f(X_0)+\sum_{s\le t}\left(f(X_s)-f(X_{s-})-\Delta X_sf(X_{s-}) \right)1_{|\Delta X_s|>a}+\int_0^tf'(X_{s-})dZ_s\nonumber
\\
&+\int_0^t\int_{|x|\le a}\left(f(X_{s-}+x)-f(X_{s-})-xf'(X_{s-})\right)(\mu-\nu)(ds,dx)\nonumber
\\
&+\sum_{s\le t} \int_{|x|\le a}\left(f(X_{s-}+x)-f(X_{s-})-xf'(X_{s-})\right)\nu(\{s\},dx). 
\end{align}
By Lemma \ref{equiv}, $\left|\sum_{ s\in\mathcal{A}}\int_{|x|\le a}\left(f(X_{s-}+x)-f(X_{s-})-xf'(X_{s-})\right)\mu(\{s\},dx)\right|<\infty \hspace{1mm} a.s.$, which allows us to re-write,
\begin{align*}
&\int_0^t\int_{|x|\le a}\left(f(X_{s-}+x)-f(X_{s-})-xf'(X_{s-})\right)(\mu-\nu)(ds,dx)=
\\
&\int_0^t\int_{|x|\le a}\left(f(X_{s-}+x)-f(X_{s-})-xf'(X_{s-})\right)(\tilde{\mu}-\nu_c)(ds,dx)+
\\
&\sum_{s\le t, s\in \mathcal{A}} \int_{|x|\le a}\left(f(X_{s-}+x)-f(X_{s-})-xf'(X_{s-})\right)(\mu-\nu)(\{s\},dx),
\end{align*}
where $\tilde{\mu}$ denotes the jump measure $\mu$ with all predictable jumps removed. We may now rewrite \eqref{frepr} as
\begin{align*}
&f(X_0)+\sum_{s\le t}\left(f(X_s)-f(X_{s-})-\Delta X_sf(X_{s-}) \right)1_{|\Delta X_s|>a}+\int_0^tf'(X_{s-})dZ_s\nonumber
\\
&+\int_0^t\int_{|x|\le a}\left(f(X_{s-}+x)-f(X_{s-})-xf'(X_{s-})\right)(\tilde{\mu}-\nu_c)(ds,dx)\nonumber
\\
&+\sum_{s\le t, s\in \mathcal{A}} \int_{|x|\le a}\left(f(X_{s-}+x)-f(X_{s-})-xf'(X_{s-})\right)\mu(\{s\},dx).
\end{align*}
Similarly let $X^n=Z^n+C^n$ where $Z^n$ is a semimartingale and $C^n$ has zero quadratic variation. We again apply Theorem \ref{absolut}, we have that $f(X^n_s)=(Y^n)^a_s+(\Gamma^n)^a_s$ where $(Y^n)^a$ is a semimartingale, $(\Gamma^n)^a$ is continuous and $[(\Gamma^n)^a]_t=0$ for all $t>0$. Arguing as above we see that the expression for $(Y^n)^a$ will be given by
\begin{align*}
Y^n_t&=f(X^n_0)+\sum_{s\le t}\left(f(X^n_s)-f(X^n_{s-})-\Delta X^n_sf(X^n_{s-}) \right)1_{|\Delta X^n_s|>a}+\int_0^tf'(X^n_{s-})dZ^n_s
\\
&+\int_0^t\int_{ |x|\le a}\left(f(X^n_{s-}+x)-f(X^n_{s-})-xf'(X^n_{s-})\right)(\tilde{\mu}_n-(\nu_n)_c)(ds,dx)
\\
&+\sum_{s\le t, s\in \mathcal{A}_n} \int_{ |x|\le a}\left(f(X^n_{s-}+x)-f(X^n_{s-})-xf'(X^n_{s-})\right)\mu_n(\{s\},dx),
\end{align*}
where $\mathcal{A}_n=\left\{(s,\omega): s\le t,\nu_n\left(\omega,\{s\},\R\right)>0\right\}$. Since
\begin{align*}
&f(X^n_{s-}+x)-f(X_{s-}+x)-x\left( f'(X^n_{s-})-f'(X_{s-}) \right)
\\
&=x\left(\int_0^1\left( f'(X^n_{s-}+\theta x)-f'(X_{s-}+\theta x) \right)d\theta  -f'(X^n_{s-})+f'(X_{s-})\right),
\end{align*}
and the term in the parenthesis is clearly locally bounded we can conclude that
\begin{align*}
\int_0^. \int_{|x|\le a}\left( f(X^n_{s-}+x)-f(X_{s-}+x)-x\left( f'(X^n_{s-})-f'(X_{s-}) \right)\right)(\tilde{\mu}-\nu_c)(ds,dx),
\end{align*}
is well defined (as a local martingale). Furthermore, since $X^n$ and $X$ have quadratic variations along the same refining sequence then so does $f(X^n)$ and $f(X)$ by Theorem \ref{absolut} which implies that $(\Gamma^n)^a-\Gamma^a$ has a quadratic variation along this sequence which is zero (the quadratic variation of the semimartingale parts of $f(X^n)$ and $f(X)$ do not depend on the refining sequence). With this in mind we make the following estimate,
\begin{align}\label{g1}
&[f(X^n)-f(X)]^\frac 12_t=[(Y^n)^a-Y^a]^\frac 12_t\le \left[\int f'(X^n_{-})d\left(Z -Z^n\right) \right]_t^{\frac 12}+\left[ \int \left( f'(X^n_{-})-f'(X_{-}) \right)dZ\right]_t^{\frac 12}\nonumber
\\
& +\left[\int_0^. \int_{|x|\le a}\left(f(X^n_{s-}+x)-f(X_{s-}+x)+f(X^n_{s-}))-f(X_{s-}))-x( f'(X^n_{s-})-f'(X_{s-}) ) \right)(\tilde{\mu}-\nu_c)(ds,dx)\right]_t^{\frac 12}\nonumber
\\
&+\left[\int_0^. \int_{|x|\le a}\left(\left(f(X^n_{s-}+x)-f(X^n_{s-}))\right)-xf'(X^n_{s-}) \right)(\tilde{\mu}-\nu_c)(ds,dx)\right.\nonumber
\\
%
&\left.\left.-\int_0^. \int_{|x|\le a}\left(\left(f(X^n_{s-}+x)-f(X^n_{s-}))\right)-xf'(X^n_{s-}) \right)(\tilde{\mu}_n-(\nu_n)_c)(ds,dx)\right]_t^{\frac 12}\right.\nonumber
\\
%
&+ \left[\sum_\mathcal{A} \int_{|x|\le a}\left((f(X^n_{s-}+x)-f(X_{s-}+x)-f(X^n_{s-}))+f(X_{s-}))-x( f'(X^n_{s-})-f'(X_{s-}) )\right)\mu(\{s\},dx) \right]_t^{\frac 12}\nonumber
\\
%
&+\left[\sum_{s\le t, s\in \mathcal{A}} \int_{|x|\le a}\left(f(X^n_{s-}+x)-f(X^n_{s-})-xf'(X^n_{s-})\right)(\mu)(\{s\},dx)
\right.\nonumber
\\
&\left.-\sum_{s\le t, s\in \mathcal{A}_n} \int_{|x|\le a}\left(f(X^n_{s-}+x)-f(X^n_{s-})-xf'(X^n_{s-})\right)(\mu_n)(\{s\},dx)  \right]_t^{\frac 12}\nonumber
\\
&+\left[\sum_{s\le t}\left(f(X^n_s)-f(X^n_{s-})-\Delta X^n_sf'(X^n_{s-}) \right)1_{|\Delta X^n_s|>a}-\sum_{s\le t}\left(f(X_s)-f(X_{s-})-\Delta X_sf'(X_{s-}) \right)1_{|\Delta X_s|>a}\right]_t^\frac 12,
\end{align}
where we substituted the expressions  for $Y$, $Y^a$, did a bit of rearrangement and then used Lemma \ref{triangle}. 
\\
$\tilde{\mu}_n$ and $\tilde{\mu}_n$ are void of predictable jumps so the quadratic variation of the $\tilde{\mu}-\nu_c$ and $\tilde{\mu}_n-\nu_c$-integrals can be computed in the same manner as in the proof of Theorem \ref{absolut}. We now expand \eqref{g1} as

\begin{align}\label{grisny2}
&[f(X^n)-f(X)]^{\frac 12}_t\le \left( \int_{0^+}^t  f'(X^n_{-})^2d\left[Z-Z^n\right]_s \right)^{\frac 12}+\left( \int_{0^+}^t \left(  f'(X^n_{s_{-}})-f'(X_{s_{-}})\right)^2d[Z]_s \right)^{\frac 12} \nonumber
\\
& +\left(\int_0^t\int_{|x|\le a}\left((f(X^n_{s-}+x)-f(X_{s-}+x)-f(X^n_{s-}))+f(X_{s-}))-x( f'(X^n_{s-})-f'(X_{s-}) )\right)^2\tilde{\mu}(ds,dx)\right)^{\frac 12}\nonumber
\\
&+\left[\int_0^. \int_{|x|\le a}\left(\left(f(X^n_{s-}+x)-f(X^n_{s-}))\right)-xf'(X^n_{s-}) \right)((\tilde{\mu}-\nu_c)(ds,dx)\right.\nonumber
\\
&\left.\left.-\int_0^. \int_{|x|\le a}\left(\left(f(X^n_{s-}+x)-f(X^n_{s-}))\right)-xf'(X^n_{s-}) \right)(\tilde{\mu}_n-(\nu_n)_c)(ds,dx)\right]_t^{\frac 12}\right.\nonumber
\\
& +\left(\sum_{s\le t,s\in\mathcal{A}} \left(\int_{|x|\le a}\left(f(X^n_{s-}+x)-f(X_{s-}+x)-f(X^n_{s-})+f(X_{s-})-x(f'(X^n_{s-})-f'(X_{s-}))\right)\mu(\{s\},dx) \right)^2 \right)^{\frac 12} \nonumber
\\
&+\left[\sum_{s\le t, s\in \mathcal{A}} \int_{|x|\le a}\left(f(X^n_{s-}+x)-f(X^n_{s-})-xf'(X^n_{s-})\right)(\mu)(\{s\},dx)
-
\right.\nonumber
\\
&\left.\sum_{s\le t, s\in \mathcal{A}_n} \int_{|x|\le a}\left(f(X^n_{s-}+x)-f(X^n_{s-})-xf'(X^n_{s-})\right)(\mu_n)(\{s\},dx)  \right]_t^{\frac 12}\nonumber
\\
&+\left[\sum_{s\le t}\left(f(X^n_s)-f(X^n_{s-})-\Delta X^n_sf'(X^n_{s-}) \right)1_{|\Delta X^n_s|>a}-\sum_{s\le t}\left(f(X_s)-f(X_{s-})-\Delta X_sf'(X_{s-}) \right)1_{|\Delta X_s|>a}\right]_t^\frac 12,
\end{align}
here we used the fact that for term five and six of \eqref{g1} (corresponding to term five and six of \eqref{grisny2}), the expressions inside the quadratic variations are quadratic pure jump semimartingales so their contributions are just the square sums of their jumps. 
First we note that for any stopping time $T\le t$, $|\Delta(X^n-X)_T|\le \sqrt{[X^n-X]_T}$ so $X^n_{s-}\xrightarrow{a.s.}X_{s-}$. If we let $B_R=\left\{\sup_n(X^n)^*_t\vee X^*_t\le R\right\}$ then $\P\left(\cup_{R} B_R\right)=\lim_{R\to\infty}\P(B_R)=1$ so we may assume that $X,\Delta X$, $\{X^n\}_n$ and $\{\Delta X^n\}_n$ are all uniformly bounded by the constant $R$.  We will let $M=\sup_{x\in[-R-a,R+a]} |f'(x)|$ (which must be finite since $f$ is locally Lipschitz).
So it will suffice to show that each term tends to zero a.s.. If we choose $a>R$ large then term seven in \eqref{grisny2} is identically zero on $B_R$, so will assume that term seven is identically zero.
\\
For the first term in \eqref{grisny2}
$$\left(\int_{0^+}^t f'(X^n_{-})^2d\left[Z -Z^n\right]_s\right)^\frac 12\le M [Z^n-Z]_t^\frac 12=M [X^n-X]_t^\frac 12, $$
which converges to zero a.s. by assumption.
\\
\\
We move on to the second term in \eqref{grisny2}. Denote $\mathsf{disc}(f')$ as the set of discontinuity points for $f'$ and let $y\in\R$ be arbitrary. We note that since $f'$ is cadlag, $\mathsf{disc}(f')$ is at most countable. By stopping we may assume that $[X^c]_t\le L$. For any sequence of fixed-time partitions $\{P^n\}_n$ with mesh tending to zero we have the following a.s. point-wise limit,
$$\int_0^t1_{\{X_s=y\}}d[X]^c_s=\lim_{n\to\infty}\sum_{t_i\in P^n, i<|P^n|}1_{\{X_{t_i}^d+X_{t_i}^c=y\}}\left([X^c]_{t_{i+1}}-[X^c]_{t_{i}}\right). $$
Since $X^c\rightarrow X^c_s$ is a continuous and therefore a measurable map this implies that if we let \\$S_n(X^c)_s=\sum_{t_i\in P^n, t_i\le s}\left(X^c_{t_i}-X^c_{t_{i-1}}\right)^2$ then $X^c\rightarrow S_n(X^c)_s$ is also a measurable map. As $S_n(X^c)_s\xrightarrow{\P}[X^c]_s$, there exists a subsequence $\{n_k\}_k$ such that $S_{n_k}(X^c)_s\xrightarrow{a.s.}[X^c]_s$. This implies that $X^c\rightarrow[X^c]_s$ is a measurable map and therefore $g(X^c,a)=1_{\{X_{t_i}^c+a=y\}}\left([X^c]_{t_{i+1}}-[X^c]_{t_{i}}\right)$ is a positive measurable map.
Since 
$$\sum_{t_i\in P^n, i<|P^n|}1_{\{X_{t_i}^d+X_{t_i}^c=y\}}\left([X^c]_{t_{i+1}}-[X^c]_{t_{i}}\right)\le [X^c]_t\le L,$$ 
we have by dominated convergence that
\begin{align*}
\E\left[\int_0^t1_{\{X_s=y\}}d[X]^c_s\|X^d\right]
&=
\E\left[\lim_{n\to\infty}\sum_{t_i\in P^n, i<|P^n|}1_{\{X_{t_i}^d+X_{t_i}^c=y\}}\left([X^c]_{t_{i+1}}-[X^c]_{t_{i}}\right)\|X^d\right]
\\
&=
\lim_{n\to\infty}\sum_{t_i\in P^n, i<|P^n|}\E\left[1_{\{X_{t_i}^d+X_{t_i}^c=y\}}\left([X^c]_{t_{i+1}}-[X^c]_{t_{i}}\right)\|X^d\right].
\end{align*}
Since $X^c$ is independent as a random variable in $C[0,t]$ from $X^d$ as random variable in $D[0,t]$ (and therefore also from $X^d_s$ for any $s\in [0,t]$), by applying Lemma \ref{condilemma} with $g$ defined as above (formally the lemma uses three arguments but we can just use a dummy constant as the third argument) we get that 
$$\E\left[1_{\{X_{t_i}^d+X_{t_i}^c=y\}}\left([X^c]_{t_{i+1}}-[X^c]_{t_{i}}\right)\|X^d\right]= h_i(X^d)$$
where $h_i(a)=\E\left[1_{\{a+X_{t_i}^c=y\}}\left([X^c]_{t_{i+1}}-[X^c]_{t_{i}}\right)\right]$. By applying Hölders inequality and the fact that \\$\left([X^c]_{t_{i+1}}-[X^c]_{t_{i}}\right)^2\le [X^c]^2_t\le L^2$ we have,
$$h_i(a)\le 
\E\left[\left([X^c]_{t_{i+1}}-[X^c]_{t_{i}}\right)^2\right]^{\frac 12}\P\left(X_{t_i}^c+a=y\right)^{\frac 12}
\le
L\P\left(X_{t_i}^c=y-a\right)^{\frac 12}
=0 ,$$
for all $a\in\R$. This implies
$$\E\left[\int_0^t1_{\{X_s=y\}}d[X]^c_s\|X^d\right]=\lim_{n\to\infty}\sum_{t_i\in P^n, i<|P^n|}h_i(X^c)=0\textsf{ a.s.}. $$
Taking expectation yields
$$\E\left[\int_0^t1_{\{X_s=y\}}d[X]^c_s\right]=\E\left[\E\left[\int_0^t1_{\{X_s=y\}}d[X]^c_s\|X^d\right]\right]=0.  $$
As $\int_0^t1_{\{X_s=y\}}d[X]^c_s\ge 0$, this implies $\int_0^t1_{\{X_s=y\}}d[X]^c_s=0$ a.s.. If we now let $A_m$ be a finite subset of $\R$ with $m$ elements, $\{a_1,...,a_m\}$, then
$$\int_0^t1_{\{s:X_s\in A_m\}}d[X]^c_s=\sum_{k=1}^m\int_0^t1_{\{s:X_s= a_k\}}d[X]^c_s=0.$$
Let $\{a_k\}_k$ be an enumeration of the countable set $\mathsf{disc}(f')$ and let $A_m$ denote its first $m$ elements. It follows from monotone convergence (path-wise) that
\begin{align}\label{cont}
\int_0^t1_{\{s:X_s\in \mathsf{disc}(f') \}}d[X]^c_s
=
\lim_{m\to\infty}\int_0^t1_{\{s:X_s\in A_m\}}d[X]^c_s
=0.
\end{align}
Hence, since if $s$ is a continuity point of $f'$ then $f$ is differentiable in $s$ and therefore $f$ in conjunction with $X$ fulfils Assumption \ref{aslow}. 
\\
\\
Let $\{T_k\}_k$ be an enumeration of the jumps of $X$, let $T_1^b=\inf\{s>0:|\Delta X_s|\ge b\}$ and for $k>1$, $T_k^b=\inf\{s>T_{k-1}:|\Delta X_s|\ge b\}$. Consider the map on $C[0,t]\times \R\times\R$, $(W,s,u)\mapsto W_s+u$, this map is clearly continuous and therefore measurable on $\mathbb{B}(C[0,t]\times\R\times\R)$. By separability of $C[0,t]$ and $\R$ we have $\mathbb{B}(C[0,t]\times\R\times\R)=\mathbb{B}(C[0,t])\times\mathbb{B}(\mathbb{R})\times\mathbb{B}(\mathbb{R})$. As a result, the function $1_{W_s+u=a}$ is also $\mathbb{B}(C[0,t])\times\mathbb{B}(\mathbb{R})\times\mathbb{B}(\mathbb{R})$-measurable. Let $\{P^n\}_n$ is a sequence of (fixed time) partitions with mesh tending to zero and define 
$$H_1^n(\tilde{X})=\min\left\{t_i\in P^n: \left|\tilde{X}_{t_i}-\tilde{X}_{t_{i-1}}\right|\ge b \right\}1_{\exists t_i\in P^n:\left|\tilde{X}_{t_i}-\tilde{X}_{t_{i-1}}\right|\ge b}+\infty 1_{\not\exists t_i\in P^n:\left|\tilde{X}_{t_i}-\tilde{X}_{t_{i-1}}\right|\ge b},$$ 
for any cadlag process $\tilde{X}\in D[0,t]$, then clearly $H_1^n$ is $\mathbb{B}(D[0,t])$-measurable and therefore so is $T_1^b(\tilde{X})=\lim_{n\to\infty}H_1^n(\tilde{X})$. Therefore $T_1^b$ is a $\sigma(X^d)$-measurable random variable. Since $X^d$ as a random variable in $D[0,t]$ is independent of $X^c$, as a random variable in $C[0,t]$ (by Lemma \ref{independenceLemma}), so is $T_1^b$. For $j>1$ we may analogously define 
\begin{align*}
H_j^n(\tilde{X})&=\min\left\{t_i\in P^n, t_i>H_{j-1}^n(\tilde{X}): \left|\tilde{X}_{t_i}-\tilde{X}_{t_{i-1}}\right|\ge b \right\}1_{\exists t_i\in P^n:\left|\tilde{X}_{t_i}-\tilde{X}_{t_{i-1}}\right|\ge b, t_i>H_{j-1}^n}
\\
&+\infty 1_{\not\exists t_i\in P^n:\left|\tilde{X}_{t_i}-\tilde{X}_{t_{i-1}}\right|\ge b,t_i>H_{j-1}^n},
\end{align*}
so that similarly to before $T_j^b(\tilde{X})=\lim_{n\to\infty}H_j^n(\tilde{X})$ and so $\{T_k^b\}_k$ are all independent of $X^c$. We now let $W=X^c$, $Y_1=T_k^b$, $Y_2=X^d_{T_k^b-}$ in Lemma \ref{condilemma} and get that 
$$\E\left[1_{X_{T_k^b-}=a}\|T_k^b\right]=\E\left[1_{X_{T_k^b}^c+X_{T_k^b-}^d=a}\|T_k^b\right]=h(X_{T_k-}^d,T_k^b) \hspace{1mm}\mathsf{a.s.},$$
where 
$$h(y_1,y_2)=\E\left[1_{X_{y_1}^c+y_2=a}\right]=\P\left(X_{y_1}^c+y_2=a\right)=0,$$ 
since $X_{y_1}^c$ has a continuous distribution. Thus, $\E\left[1_{X_{T_k^b-}^b=a}\|T_k^b\right]=0$ a.s. and therefore 
$$\P\left(X_{T_k^b-}=a\right) =\E\left[\E\left[1_{X_{T_k^b-}=a}\|T_k^b\right]\right]=0.$$
We may now conclude that $\P\left(X_{T_k^b-}\in\mathsf{disc}(f') \right)=0$, for every $k$ since $\mathsf{disc}(f')$ is countable. Next, for any $\epsilon>0$ we may now choose $b>0$ so small that 
$$\sum_{s\le t: |\Delta X_s|< b}|\Delta X_s|^2=\sum_{k: |\Delta X_{T_k}|< b}|\Delta X_{T_k}|^2=\sum_{k}|\Delta X_{T_k}|^2-\sum_{k}|\Delta X_{T_k^b}|^2<\epsilon.$$
Utilizing \eqref{cont}, we now re-write the expression inside the square root of the second term in \eqref{grisny2} as
\begin{align}\label{frst}
\int_{0^+}^t \left(  f'(X^n_{s_{-}})-f'(X_{s_{-}})\right)^2d[Z]_s
&=\int_{0^+}^t \left(  f'(X^n_{s_{-}})-f'(X_{s_{-}})\right)^2d[X]^c_s+\sum_{k>0}(\Delta X_{T_k})^2\left(f'(X^n_{T_k-}) -f'(X_{T_k-})\right)^2\nonumber
\\
&=\int_{0^+}^t \left(  f'(X^n_{s_{-}})-f'(X_{s_{-}})\right)^21_{\{s:X_s\not\in \mathsf{disc}(f')\}}d[X]^c_s\nonumber
\\
&+
\int_{0^+}^t \left(  f'(X^n_{s_{-}})-f'(X_{s_{-}})\right)^21_{\{s:X_s\not\in \mathsf{disc}(f')\}}d[X]^c_s\nonumber
\\
&+\sum_{k>0: |\Delta X_{T_k}|<b}(\Delta X_{T_k})^2\left(f'(X^n_{T_k-}) -f'(X_{T_k-})\right)^2
\nonumber
\\
&+
\sum_{k>0}(\Delta X_{T_k^b})^2\left(f'(X^n_{T_k^b-}) -f'(X_{T_k^b-})\right)^2\nonumber
\\
&\le \int_{0^+}^t \left(  f'(X^n_{s_{-}})-f'(X_{s_{-}})\right)^21_{\{s:X_s\not\in \mathsf{disc}(f')\}}d[X]^c_s
\nonumber
\\
&+
\sum_{k>0}(\Delta X_{T_k^b})^2\left(f'(X^n_{T_k^b-}) -f'(X_{T_k^b-})\right)^2+M
2\epsilon
\end{align}
On $\{s:X_s\not\in \mathsf{disc}(f')\}$, $X^n_{s-}\xrightarrow{a.s.}X_{s-}$ implies $f'(X^n_{s-})\xrightarrow{a.s.}f'(X_{s-})$. Since $\left(  f'(X^n_{s_{-}})-f'(X_{s_{-}})\right)^2\le M^2$, it follows from the dominated convergence theorem, applied pathwise a.s. that 
$$\int_{0^+}^t \left(  f'(X^n_{s_{-}})-f'(X_{s_{-}})\right)^21_{\{s:X_s\not\in \mathsf{disc}(f')\}}d[X]^c_s\to 0 $$
a.s. as $n\to \infty$.
For the second term of \eqref{frst} we first note that the terms inside the sum all converge to zero since $X_{T_k^b-}$, for each $k$, is a.s. a continuity point for $f'$. Since this sum only contain a finite number of terms, and each term converges we get that $\sum_{k>0}(\Delta X_{T_k^b})^2\left(f'(X^n_{T_k^b-}) -f'(X_{T_k^b-})\right)^2\to 0$ as $n\to\infty$ a.s.. Therefore $\lim_{n\to\infty}\int_{0^+}^t \left(  f'(X^n_{s_{-}})-f'(X_{s_{-}})\right)^2d[Z]_s<M^2\epsilon$ a.s., since $\epsilon$ is arbitrary we conclude that the second term in \eqref{grisny2} vanishes.
\\
\\
Recall that the third term or \eqref{grisny2} is given by
\begin{align}\label{tre}
\int_0^t\int_{|x|\le a}\left(f(X^n_{s-}+x)-f(X_{s-}+x)-f(X^n_{s-})+f(X_{s-})-x(f'(X^n_{s-})-f'(X_{s-}))\right)^2\tilde{\mu}(ds,dx).
\end{align}
Since $\mu$ is supported on the set $\left\{\Delta X\not=0\right\}$, so is $\tilde{\mu}$ (which is supported on a subset of the same set), hence we may re-write \eqref{tre} as
\begin{align}\label{treAsSum}
\sum_{k=1}^\infty&\int_{|x|\le a}\left(f(X^n_{T_k-}+x)-f(X_{T_k-}+x)-f(X^n_{T_k-})+f(X_{T_k-})-x(f'(X^n_{T_k-})-f'(X_{T_k-}))\right)^2\tilde{\mu}(\{T_k\},dx)\nonumber
\\
&\le 
2\sum_{k=1}^\infty\int_{|x|\le a}\left(f(X^n_{T_k-}+x)-f(X_{T_k-}+x)-f(X^n_{T_k-})+f(X_{T_k-})\right)^2\tilde{\mu}(\{T_k\},dx)
\nonumber
\\
&+
2\sum_{k>0: |\Delta X_{T_k}|<b}(\Delta X_{T_k})^2\left(f'(X^n_{T_k-}) -f'(X_{T_k-})\right)^2
+
2\sum_{k>0}(\Delta X_{T_k^b})^2\left(f'(X^n_{T_k^b-}) -f'(X_{T_k^b-})\right)^2.
\end{align}
Since $X_{T_k^b-}$ is a.s. a continuity point of $f'$ the integrand converges pointwise to zero. Note that 
\begin{align}\label{fdiff}
\left|f(X^n_{s-}+x)-f(X_{s-}+x)-f(X^n_{s-})+f(X_{s-})\right|=|x|\left|\int_0^1\left( f'(X^n_{s-}+\theta x)-f'(X_{s-}+\theta x) \right)d\theta\right|\le 2M|x|
\end{align} 
and  $\left|x(f'(X^n_{s-})-f'(X_{s-}))\right|\le 2M|x|$
on the set $B_R$. We conclude that the whole integrand in \eqref{tre} can be dominated by $16M^2x^2$, which is $\tilde{\mu}$-integrable and since the integrand converges pointwise to zero, \eqref{tre} will vanish due to dominated convergence applied path-wise. For the two sums on the right-hand side of \eqref{fdiff} we already showed in \eqref{frst} that these terms tend to zero as $b\to 0$.
\\
\\
We now proceed with term four. For any $r>0$, we have that on $\bigcap_{k=1}^\infty\left\{\exists s\le t: |\Delta X_s|\in \left[r\left(1-\frac{1}{2^k}\right),r\left(1-\frac{1}{2^{k+1}}\right)\right)\right\}$,
\begin{align*}
\infty
=
\frac{r}{2}\sum_{k=1}^\infty 1
&=\frac{r}{2}\sum_{k=1}^\infty 1_{|\Delta X_s|\in\left[r\left(1-\frac{1}{2^k}\right),r\left(1-\frac{1}{2^{k+1}}\right)\right)} 
\\
&\le \sum_{s\le t}\left(\Delta X_s\right)^2 \le [X]_t,
\end{align*}
but $[X]_t<\infty$ a.s. and therefore $\P\left(\bigcap_{k=1}^\infty\left\{\exists s\le t: |\Delta X_s|\in \left[r\left(1-\frac{1}{2^k}\right),r\left(1-\frac{1}{2^{k+1}}\right)\right)\right\}\right)=0$. In other words, for any $r>0$, $\P\left(\bigcup_{k=1}^\infty A_k(r)\right)=1$, where $A_k(r)=\left\{\not\exists s\le t: |\Delta X_s|\in \left[r\left(1-\frac{1}{2^k}\right),r\left(1-\frac{1}{2^{k+1}}\right)\right)\right\}$. Let us now define $\delta(k,r)=\frac{r}{2^{k+1}}$ (we shall suppress the dependence on $r$ and $k$ from now on)  and $L(r,k)=r\left(1-\frac{3}{2^{k+2}}\right)$ (we will suppress the dependence on $r$ and from now on) so that $|\Delta X_s|\not\in \left(L(r)-\delta,L(r)+\delta\right)$, $\forall s\le t$ on $A_k(r)$. On $A_k(r)$, if for some stopping time $T\le t$, $|\Delta X_T|<L(r)$ then $|\Delta X_T|<L(r)-\delta$ and similarly $|\Delta X_T|> L(r)$ implies $|\Delta X_T|> L(r)+\delta$. Since $|\Delta (X^n-X)_T|\xrightarrow{a.s.}0$ for any stopping time $T\le t$ it follows that if $\tilde{T}_1$ and $\tilde{T}_2$ are stopping times such that $|\Delta X_{\tilde{T}_1}|\ge L(r)$ and $|\Delta X_{\tilde{T}_2}|< L(r)$ there exists some $N_{r,\delta}(\omega)\in\N$ such that $n\ge N_{r,\delta}(\omega)$ implies that $|\Delta X^n_{\tilde{T}_1}|\ge L(r)$ and $|\Delta X^n_{\tilde{T}_2}|< L(r)$. Next,
\begin{align}\label{jumptrick}
&\sum_{s\le t} (\Delta X^n_s)^21_{|\Delta X^n_s|\le L(r)}=\sum_{s\le t} (\Delta (X^n-X)_s+\Delta X_s)^21_{|\Delta X^n_s|\le L(r)} \nonumber
\\
\le &2\sum_{s\le t} (\Delta (X^n-X)_s)^2+2\sum_{s\le t} (\Delta X_s)^21_{|\Delta X^n_s|\le L(r)}\nonumber
\\
\le &2\sum_{s\le t} (\Delta (X^n-X)_s)^2+2\sum_{s\le t} (\Delta X_s)^21_{|\Delta X_s|\le L(r)}\le 2[X^n-X]_t+2\sum_{s\le t} (\Delta X_s)^21_{|\Delta X_s|\le L(r)},
\end{align}
for any $n\ge N_{r,\delta}$.
In any bounded region that is also bounded away from zero we may split both the $(\tilde{\mu}-(\nu)_c)$- and $(\tilde{\mu}_n-(\nu_n)_c)$ integrals appearing in term four of \eqref{grisny2} into $(\tilde{\mu}-(\mu)_n)$ and $(\tilde{\mu}-\nu_c)$ integrals. By Lemma \ref{triangle} and a little bit of rearrangement,
\begin{align}\label{trm5}
&\left[\int_0^. \int_{|x|\le a}\left(\left(f(X^n_{s-}+x)-f(X^n_{s-}))\right)-xf'(X^n_{s-}) \right)((\tilde{\mu}-(\nu)_c)(ds,dx)\right.\nonumber
\\
&\left.-\int_0^. \int_{|x|\le a}\left(\left(f(X^n_{s-}+x)-f(X^n_{s-}))\right)-xf'(X^n_{s-}) \right)(\tilde{\mu}_n-(\nu_n)_c)(ds,dx)\right]_t^{\frac 12}\nonumber
\\
&\le \left[\int_0^.\int_{|x|\le L(r)}\left(\left(f(X^n_{s-}+x)-f(X^n_{s-}))\right)-xf'(X^n_{s-}) \right)((\tilde{\mu}-\nu_c)(ds,dx)\right]_t^{\frac 12}\nonumber
\\
&+\left[\int_0^.\int_{|x|\le L(r)}\left(\left(f(X^n_{s-}+x)-f(X^n_{s-}))\right)-xf'(X^n_{s-}) \right)(\tilde{\mu}_n-(\nu_n)_c)(ds,dx)\right]_t^{\frac 12}\nonumber
\\
&+\left[\int_0^.\int_{L(r)<|x|\le a}\left(\left(f(X^n_{s-}+x)-f(X^n_{s-}))\right)-xf'(X^n_{s-}) \right)(\tilde{\mu}-\tilde{\mu}_n)(ds,dx)\right]_t^{\frac 12}\nonumber
\\
&+\left[\int_0^.\int_{L(r)<|x|\le a}\left(\left(f(X^n_{s-}+x)-f(X^n_{s-}))\right)-xf'(X^n_{s-}) \right)(\nu-\nu_n)_c(ds,dx)\right]_t^{\frac 12},
\end{align}
where the final term is zero since the process inside the quadratic variation is continuous with finite variation. For the first term on the right-hand side of \eqref{trm5} we have
\begin{align*}
&\left[\int_0^.\int_{|x|\le L(r)}\left(\left(f(X^n_{s-}+x)-f(X^n_{s-}))\right)-xf'(X^n_{s-}) \right)((\tilde{\mu}-\nu_c)(ds,dx)\right]_t^{\frac 12}
\\
&=\left(\int_0^t\int_{|x|\le L(r)}\left(\left(f(X^n_{s-}+x)-f(X^n_{s-}))\right)-xf'(X^n_{s-}) \right)^2\tilde{\mu}(ds,dx)\right)^{\frac 12}
\\
&\le\left(\int_0^t\int_{|x|\le L(r)}4M^2x^2\tilde{\mu}(ds,dx)\right)^{\frac 12}\le
 2M \left(\sum_{s\le t} (\Delta X_s)^21_{|\Delta X_s|\le L(r)} \right)^{\frac 12}
\end{align*}
which can be arbitrarily small by choosing $r$ small enough.

The second term of \eqref{trm5} equals 
\begin{align*}
&\left(\left[\int_0^.\int_{|x|\le L(r)}(((f(X^n_{s-}+x)-f(X^n_{s-})))-xf'(X^n_{s-}) )(\tilde{\mu}_n-(\nu_n)_c)(ds,dx)\right]_t\right)^\frac 12
\\
\le &\left(\int_0^.\int_{|x|\le L(r)}x^24M^2\tilde{\mu}_n(ds,dx)\right)^\frac 12
\le 2M\left(\sum_{s\le t} (\Delta X^n_s)^21_{|\Delta X^n_s|\le L(r)}\right)^\frac 12.
\end{align*}
According to \eqref{jumptrick}, 
\begin{align}\label{asd}
&\sum_{s\le t} (\Delta X^n_s)^21_{|\Delta X^n_s|\le L(r)}\le 2[X^n-X]_t+2\sum_{s\le t} (\Delta X_s)^21_{|\Delta X_s|\le L(r)},
\end{align}
for $n\ge N_{r,\delta}$.
For any $\epsilon>0$, we may pick $N(\epsilon)$ such that $[X^n-X]_t<\epsilon$ for $n\ge N(\epsilon)$. This implies that $\lim_{r\to 0^+}\sum_{s\le t} (\Delta X^n_s)^21_{|\Delta X^n_s|\le L(r)}<\epsilon$ for $n\ge N(\epsilon)\vee N_{r,\delta}$. In other words the limit as $r\to 0^+$ of the second term of \eqref{trm5} is uniformly bounded above by $2M\epsilon$ over $n\ge N(\epsilon)$.
Next, for the third term in the right-hand side of \eqref{trm5}  we can rewrite it as
\begin{align*}
&\left[\sum_{s\le ., s\not\in \mathcal{A}\cup\mathcal{A}_n} \left(f(X^n_{s-}+\Delta X_s)-f(X^n_{s-}+\Delta X^n_s)- \Delta (X^n-X)_sf'(X^n_{s-})\right) 1_{L(r)<|\Delta X_s|\le a}\right]_t
\\
&\le \sum_{s\le t}\left(f(X^n_{s-}+\Delta X_s)-f(X^n_{s-}+\Delta X^n_s)- \Delta (X^n-X)_sf'(X^n_{s-})\right)^2 1_{L(r)<|\Delta X_s|\le a},
\end{align*}
which contains finitely many terms, each of which converges to zero a.s.. Thus the limit of \eqref{trm5} as $n\to\infty$ is bounded above by $2M\epsilon$ for any $\epsilon>0$, letting $\epsilon\to 0^+$ shows that \eqref{trm5} goes to zero.
\\
\\
  We now handle term five of \eqref{grisny2}. Since $\{T_k\}_k$ exhausts all jumps we have that $\{U_k: |\Delta X_{U_k}|>0\}\subseteq \{T_k\}_k$ and therefore
\begin{align}\label{nastsist}
&\sum_{ s\in\mathcal{A}} \left(\int_{|x|\le a}\left(f(X^n_{s-}+x)-f(X_{s-}+x)-f(X^n_{s-})+f(X_{s-})-x(f'(X^n_{s-})-f'(X_{s-}))\right)\mu(\{s\},dx) \right)^2 \nonumber
\\
\le
&\sum_{k=1}^\infty \left(\int\left|f(X^n_{U_k-}+x)-f(X_{U_k-}+x)-f(X^n_{U_k-})+f(X_{U_k-})-x(f'(X^n_{U_k-})-f'(X_{U_k-}))\right|\mu(\{U_k\},dx) \right)^2 \nonumber
\\
\le
&\sum_{k=1}^\infty\left(\int \left|f(X^n_{T_k-}+x)-f(X_{T_k-}+x)-f(X^n_{T_k-})+f(X_{T_k-})-x(f'(X^n_{T_k-})-f'(X_{T_k-}))\right|\mu(\{T_k\},dx)\right)^2\nonumber
\\
=
&\sum_{k=1}^\infty\left( \left( f(X^n_{T_k}+\Delta X_{T_k})-f(X^n_{T_k})-f( X_{T_k}+\Delta X_{T_k})+f( X_{T_k})-\Delta X_{T_k}(f'(X^n_{T_k-})-f'(X_{T_k-}))\right)\right)^2\nonumber
\\
\le
&2\sum_{k=1}^\infty  \left(f(X^n_{T_k}+\Delta X_{T_k})-f(X^n_{T_k})-f( X_{T_k}+\Delta X_{T_k})+f( X_{T_k})\right)^2
+
2\sum_{k=1}^\infty \Delta X_{T_k}^2(f'(X^n_{T_k-})-f'(X_{T_k-}))^2.
\end{align}
Due to \eqref{fdiff},
\begin{align*}
\left(f(X^n_{T_k}+\Delta X_{T_k})-f(X^n_{T_k})-f( X_{T_k}+\Delta X_{T_k})+f( X_{T_k})\right)^2
\le
8M^2 |\Delta X_{T_k}|^2
\end{align*}
and since
$\sum_{k=1}^\infty |\Delta X_{T_k}|^2\le [X]_t<\infty $ it follows from dominated convergence (applied path-wise with respect to the counting measure) that the first term on the right-most side of \eqref{nastsist} converges to zero. The second term was already shown to converge to zero a.s. in \eqref{fdiff}.
\\
\\
Let us now consider term six of \eqref{grisny2}. We first fix $r>0$ so small that $\sum_{s\le t,s\in \mathcal{A}}|\Delta X_s|1_{|\Delta X_s|> L(r)}$ is small enough for our purposes and then fix some small $\delta>0$. Let as before $\{U_k\}_k$ be a exhausting sequence of (predictable) stopping times for $\mathcal{A}$ and similarly we let $\{U^n_k\}_k$ be a exhausting sequence of (predictable) stopping times for $\mathcal{A}_n$. We shall now show that on $A_\delta(r)$ then $n\ge N_{r,\delta}(\omega)$ (as defined above \eqref{jumptrick}) implies
\begin{align}\label{A}
\bigcup_{k:|\Delta X^n_{U^n_k}|>L(r)}\left\{U^n_k \right\} = \bigcup_{k:|\Delta X_{U_k}|>L(r)}\left\{U_k \right\}.
\end{align}
Note that the above two sets are random, where $\bigcup_{k:|\Delta X_{U_k}|>L(r)}\left\{U_k \right\}=\mathcal{A}\cap\left\{s:|\Delta X_s|>L(r)\right\}$ and \\$\bigcup_{k:|\Delta X^n_{U^n_k}|>L(r)}\left\{U^n_k \right\}=\mathcal{A}_n\cap\left\{s:|\Delta X^n_s|>L(r)\right\}$. We now show \eqref{A}. Since $U_k$ is predictable for each $k$, and $1_{\nu^n\left(\{s\},\R\right)=0}|x|$ is a predictable function, we have 
\begin{align*}
0=\int |x|1_{\nu^n\left(\{U_k\},\R\right)=0}\nu^n\left(\{U_k\},dx\right)
&=
\E\left[\int |x|1_{\nu^n\left(\{U_k\},\R\right)=0}\nu^n\left(\{U_k\},dx\right)\|\F_{U_k-}\right]
\\
&=
\E\left[1_{\nu^n\left(\{U_k\},\R\right)=0}|\Delta X^n_{U_k}|\|\F_{U_k-}\right]
\end{align*}
a.s. which implies $1_{\nu^n\left(\{U_k\},\R\right)=0}|\Delta X^n_{U_k}\|=0$ a.s.. Therefore $\P\left(\left\{\nu^n\left(\{U_k\},\R\right)=0\right\}\cap\left\{|\Delta X^n_{U_k}|>0\right\}\right)=0$ for all $n$ and $k$. On $A_\delta(r)$, if $n\ge N_{r,\delta}(\omega)$ then $|\Delta X_{U_k}|>L(r)$ if and only if $|\Delta X^n_{U_k}|>L(r)$. So if $|\Delta X_{U_k}|>L(r)$ this a.s. implies $\nu^n\left(\{U_k\},\R\right)>0$ and therefore 
$$
\bigcup_{k:|\Delta X_{U_k}|>L(r)}\left\{U_k \right\}
\subseteq 
\bigcup_{k:|\Delta X^n_{U^n_k}|>L(r)}\left\{U^n_k \right\} .
$$
A completely analogous argument shows the reverse inclusion.
We now compute the following bound for term six of \eqref{grisny2},
\begin{align}\label{muterms}
&\left[\sum_{s\in \mathcal{A}} \int_{|x|\le a}\left(f(X^n_{s-}+x)-f(X^n_{s-})-xf'(X^n_{s-})\right)(\mu)(\{s\},dx)
\right.\nonumber
\\
&\left.-\sum_{s\in \mathcal{A}_n} \int_{|x|\le a}\left(f(X^n_{s-}+x)-f(X^n_{s-})-xf'(X^n_{s-})\right)(\mu_n)(\{s\},dx)  \right]_t^{\frac 12}\le\nonumber
\\
&\left[\sum_{s\in\mathcal{A}}\int_{|x|\le L(r)}\left(f(X^n_{s-}+x)-f(X^n_{s-})-xf'(X_{s-})\right)(\mu)(\{s\},dx)\right]^{\frac 12}_t
\nonumber
\\
&+\left[\sum_{s\in\mathcal{A}_n}\int_{|x|\le L(r)}\left(f(X^n_{s-}+x)-f(X^n_{s-})-xf'(X_{s-})\right)(\mu_n)(\{s\},dx)\right]^{\frac 12}_t\nonumber
\\
&+\left[\sum_{s\le t, s\in \mathcal{A}} \int_{L(r)<|x|\le a}\left(f(X^n_{s-}+x)-f(X^n_{s-})-xf'(X^n_{s-})\right)(\mu)(\{s\},dx)
\right.\nonumber
\\
&\left.-\sum_{ s\in \mathcal{A}_n} \int_{L(r)<|x|\le a}\left(f(X^n_{s-}+x)-f(X^n_{s-})-xf'(X^n_{s-})\right)(\mu_n)(\{s\},dx)  \right]_t^{\frac 12}\nonumber
\\
&\le\left(\sum_{s\in\mathcal{A}}\int_{|x|\le L(r)}\left(4M^2x^2\right)(\mu)(\{s\},dx)\right)^{\frac 12}
+
\left(\sum_{s\in\mathcal{A}_n}\int_{|x|\le L(r)}\left(4M^2x^2\right)(\mu_n)(\{s\},dx)\right)^{\frac 12}\nonumber
\\
&+\left[\sum_{ s\in \mathcal{A}} \int_{L(r)<|x|\le a}\left(f(X^n_{s-}+x)-f(X^n_{s-})-xf'(X^n_{s-})\right)(\mu)(\{s\},dx)
\right.\nonumber
\\
&\left.-\sum_{ s\in \mathcal{A}_n} \int_{L(r)<|x|\le a}\left(f(X^n_{s-}+x)-f(X^n_{s-})-xf'(X^n_{s-})\right)(\mu_n)(\{s\},dx)  \right]_t^{\frac 12}
\end{align}
The first and second term on the right-most side above are bounded by $ 2M \left(\sum_{s\le t} (\Delta X_s)^21_{|\Delta X_s|\le L(r)} \right)^{\frac 12}$ and $ 2M \left(\sum_{s\le t} (\Delta X^n_s)^21_{|\Delta X^n_s|\le L(r)} \right)^{\frac 12}$ respectively. Both of these terms tend to zero (uniformly over $n$ for the second term) as $r\to 0^+$, due to \eqref{jumptrick}. Since
\begin{align*}
&\sum_{ s\in \mathcal{A}} \int_{L(r)<|x|\le a}\left(f(X^n_{s-}+x)-f(X^n_{s-})-xf'(X^n_{s-})\right)(\mu)(\{s\},dx)
\\
&-
\sum_{ s\in \mathcal{A}_n} \int_{L(r)<|x|\le a}\left(f(X^n_{s-}+x)-f(X^n_{s-})-xf'(X^n_{s-})\right)(\mu_n)(\{s\},dx)
\\
&=
\sum_{ k:|\Delta X_{ U_k}|>L(r)}\left(\left(\left(f(X^n_{U_k-}+\Delta X_{U_k})-f(X^n_{U_k-})\right)-\Delta X_{U_k}f'(X^n_{U_k-})\right)\right)
\\
&-
\sum_{ k:|\Delta X^n_{ U^n_k}|>L(r)}\left(\left(\left(f(X^n_{U^n_k-}+\Delta X^n_{U^n_k})-f(X^n_{U^n_k-})\right)-\Delta X^n_{U^n_k}f'(X^n_{U_k-})\right)\right).
\end{align*}
Therefore, for $n\ge N_{r,\delta}(\omega)$ on $A_\delta(r)$ the final term equals (due to \eqref{A}),
\begin{align*}
\left(\sum_{k: L(r)<|\Delta X_{U_k}|\le a }\left((f(X^n_{U_k-}+\Delta X_{U_k}))-(f(X^n_{U_k-}+\Delta X^n_{U_k}))-(\Delta X_{U_k}-\Delta X^n_{U_k})f'(X^n_{U_k-})\right)^2\right)^{\frac 12}
\end{align*}
which converges pathwise a.s. on $A_\delta(r)$ to zero due to the fact that this sum contains finitely many terms, $\Delta X^n_{U_k}\xrightarrow{a.s.}\Delta X_{U_k}$ for each $k$, and the fact that $X_{T_k}$ and $X_{T_k-}$ are a.s. continuity points (recall that $\{U_k: |\Delta X_{U_k}|>0\}\subseteq \{T_k\}_k$), for each $k$.
\end{proof}

\begin{proof}[Comments on the proof of Theorem \ref{C1}]
Note that we can't apply Theorem \ref{absolut} as it requires quadratic variation in the strong sense. Instead, we may apply Theorem 2.1 in \cite{NonCont}, note that this decomposition however cuts the jump sum in \eqref{Eq:Ya} at 1 instead of $a$, but this choice is completely arbitrary and the Theorem is still true if we use $a$ as cut-off point.
None of the arguments regarding discontinuity points in the proof of Theorem \ref{a.s.ettf} are required since $f\in C^1$, we also do not need to concern ourselves with fulfilling Assumption \ref{aslow}. All assumptions of Theorem 2.1. in \ref{aslow} are trivially fulfilled. The rest of the proof is then identical to the one of Theorem \ref{a.s.ettf}.
\end{proof}
\begin{proof}[Proof of Proposition \ref{PropVar}]
Using the decomposition \eqref{FolDe}, Lemma \ref{triangle} and the fact that the first and final terms in \eqref{FolDe} have zero quadratic variation we get
\begin{align}\label{basic}
[f(X^n)-f(X)]^{\frac 12}_t
&\le 
\left( \int_{0^+}^t  f'(X^n_{-})^2d\left[X-X^n\right]_s \right)^{\frac 12}+\left( \int_{0^+}^t \left(  f'(X^n_{s_{-}})-f'(X_{s_{-}})\right)^2d[X]_s \right)^{\frac 12} \nonumber
\\
&+\left[ \sum_{s\le t}\Delta \left(f(X_s) -f(X^n_s)\right)\right]^{\frac 12}
+\left[ \sum_{s\le t}\Delta X_s\left(f'(X_s) -f'(X^n_s)\right)\right]^{\frac 12}
+\left[ \sum_{s\le t}\left(\Delta X_s-\Delta X^n_s\right)f'(X^n_{s-})\right]^{\frac 12}.
\end{align}
The first two terms can be dealt with just as in the proof of \ref{a.s.ettf}. For the third term, letting $L(r)$ be as in the proof of Theorem \ref{a.s.ettf},
\begin{align*}
\left[ \sum_{s\le t}\Delta \left(f(X_s) -f(X^n_s)\right)\right]
&=\sum_{s\le t}
\left(\Delta \left(f(X_s) -f(X^n_s)\right)\right)^2
\\
&\le 2\sum_{s\le t}
\left(\Delta f(X_s) \right)^21_{|\Delta X_s|\le L(r)}
+
2\sum_{s\le t}
\left(\Delta f(X^n_s) \right)^21_{|\Delta X_s|\le L(r)}
\\
&+
\sum_{s\le t}\left(\Delta \left(f(X_s) -f(X^n_s)\right)\right)^21_{|\Delta X_s|> L(r)}.
\end{align*}
Since
$$ \sum_{s\le t}
\left(\Delta f(X^n_s) \right)^21_{|\Delta X_s|\le L(r)}
=
\sum_{s\le t} (\Delta X^n_s)^2\left(\int_0^1 f'\left(X^n_s +\theta\Delta X^n_s\right)d\theta\right)^21_{|\Delta X_s|\le L(r)},$$
by a localization argument and the fact that $\{(X^n)^*_t\}_n$ is bounded in probability one may readily deduce that $\left(\int_0^1 f'\left(X^n_s +\theta\Delta X^n_s\right)d\theta\right)^2\le M$, for some $M\in\R^+$. Employing the same trick as in \eqref{jumptrick} now shows that we may make the term $ \sum_{s\le t}
\left(\Delta f(X^n_s) \right)^21_{|\Delta X_s|\le L(r)}$ uniformly small over all $n$. Similarly for some $M'\in\R^+$, we may bound
$$\sum_{s\le t} \left(\Delta f(X^n_s) \right)^21_{|\Delta X_s|\le L(r)}
\le  M'\sum_{s\le t}\left(\Delta X_s\right)^21_{|\Delta X_s|\le L(r)},$$
which goes to zero as $r\to 0$. The sum $\sum_{s\le t}\left(\Delta \left(f(X_s) -f(X^n_s)\right)\right)^21_{|\Delta X_s|> L(r)}$ only contains a finite of terms and by the assumption on point wise convergence, this sum vanishes. For the fourth term of \eqref{basic},
\begin{align*}
\left[ \sum_{s\le t}\Delta X_s\left(f'(X_s) -f'(X^n_s)\right)\right]
&=
\sum_{s\le t}(\Delta X_s)^2\left(f'(X_s) -f'(X^n_s)\right)^2
\\
&\le \sum_{s\le t}(\Delta X_s)^2 1_{|\Delta X_s|<r}2(M'+M)
+
\sum_{s\le t}(\Delta X_s)^2\left(f'(X_s) -f'(X^n_s)\right)^21_{|\Delta X_s|<r}.
\end{align*}
For every $r>0$ the second sum above contains a finite number of terms that will vanish as $n\to\infty$. Letting $r\to 0$ makes the first term vanish. For the fifth and final term of \eqref{basic}, we have
\begin{align*}
\left[ \sum_{s\le t}\left(\Delta X_s-\Delta X^n_s\right)f'(X^n_{s-})\right] &=
\sum_{s\le t} \left(\Delta\left( X^n-X\right)\right)^2f'(X^n_{s-})^2
\\
&\le
\sum_{s\le t} M \left(\Delta\left( X^n-X\right)\right)^2
\le M[X^n-X]_t,
\end{align*}
which converges to zero.
\end{proof}

\hfill
\begin{thebibliography}{99}

\bibitem[P. Billingsley. (1999)]{Billy}
Billingsley, P. (1999).
Convergence of probability measures.
\textit{Wiley Series in Probability and Statistics: Probability and Statistics John Wiley and Sons Inc., New York, Second edition}
\bibitem[Coquet, M\'emin and S{\l}omi{\'n}ski(2003)]{NonCont}
Coquet, F., M\'emin, J. and S{\l}omi{\'n}ski, L. (2003).
On Non-Continuous Dirichlet Processes.
\textit{Journal of Theoretical Probability}, {\bf 16}, 197.
\bibitem[F\"ollmer(1981)]{Fol}
F\"ollmer, H. (1981). Dirichlet processes. In \textit{Lecture Notes in Maths.}, Vol. {\bf 851}, pp. 476-–478, Springer-Verlag, Berlin/Heidelberg/New York.
\bibitem[F\"ollmer(1981)]{FolIto}
  F\"ollmer, H. (1981). Calcul d'Ito sans probabilit\'es. In \textit{S\'eminaire de probabilit\'es (Strasbourg).}, Vol. {\bf 851}, , pp. 143-150, Springer-Verlag, Berlin/Heidelberg/New York.
\bibitem[Jacod and Shiryaev(2003)]{JACOD}
Jacod, J. and Shiryaev, A. N. (2003). 
\textit{Limit Theorems for Stochastic Processes}. Springer-Verlag.
\bibitem[Liptser and Shiryaev(1986)]{Mart}
Liptser, R. and Shiryaev, A. (1986) \textit{Theory of Martingales}. Springer-Verlag.
\bibitem[Lowther(2010)]{Low}
Lowther, G. (2010). Nondifferentiable functions of one-dimensional semimartingales. \textit{Ann. Prob.} {\bf 38} (1) 76 - 101.
\bibitem[Protter(1992)]{PRT}
Protter, P. (1992).	
\textit{Stochastic Integration and Differential Equations}. Springer-Verlag, Berlin, Heidelberg, Second edition.
\bibitem[Sarkhel(2003)]{Sar} Sarkhel, D. N. (2003). Baire one functions. \textit{Bulletin-Institute of Mathematics Academia Sinica}, {\bf 31}(2), 143-149.
\bibitem[Stricker(1988)]{Stricker}
Stricker, C. (1988).
Variation conditionnelle des processus stochastiques.
\textit{Ann. Inst. H. Poincaré Probab. Statist.}, {\bf 24},  295–305.
\end {thebibliography}
\end{document}